\documentclass[12pt]{article}
\usepackage{amsmath}
\usepackage{amsfonts}
\usepackage{amssymb}
\usepackage{theorem} 

\title{The generalized continuous model theory, Borel complexity and stability}
\author{A. Ivanov}
\date{ } 

\newtheorem{thm}{Theorem}[section]
\newtheorem{lem}[thm]{Lemma}
\newtheorem{definition}[thm]{Definition}
\newtheorem{cor}[thm]{Corollary}
\newtheorem{prop}[thm]{Proposition}
\newtheorem{remark}[thm]{Remark}  
\newtheorem{example}[thm]{Example}

\begin{document}
\maketitle
\topskip 20pt

\begin{quote}
{\bf Abstract.} 
Given  Polish space $\mathcal{Y}$ and a continuous language $L$ 
we study the corresponding logic $\mathsf{Iso}(\mathcal{Y})$-space $\mathcal{Y}_L$. 
We build a framework of generalized model theory towards analysis of Borel complexity of families of subsets of Effros spaces  
$\mathcal{F}(\mathcal{Y})^k_L\times  \mathcal{F}(\mathsf{Iso} (\mathcal{Y}))^l$ corresponding to standard model-theoretic properties. 
In this paper we mainly apply this approach to stability. 
\bigskip

{\em 2010 Mathematics Subject Classification:} 03C45, 03C66, 03E15

{\em Keywords:}  Polish G-spaces, Continuous logic, Generalized model theory, Stable theories.
\end{quote}

\tableofcontents 

\section{Introduction}  
\paragraph{Generalized model theory.} 
Let $L = ( R_i )_{i\in I}$ be a countable relational language where 
$R_i$ is an $n_i$-ary relational symbol. 
The product space $X_L= \prod_{i\in I} 2^{\omega^{n_i}}$ can be viewed  
as the space of $L$-structures on $\omega$. 
If $F$ is a countable fragment of $L_{\omega_1 \omega }$, 
then the family of all sets 
$\mathsf{Mod}(\phi,\bar{s}) = \{ M\in X_L :M\models \phi(\bar{s})\}$ 
for $\phi \in F$ and tuples $\bar{s} \in \omega$, forms a basis of a topology on $X_L$ denoted by $t_F$.  
The logic action of the group $S_{\infty}$ of all permutations of $\omega$ on $X_L$ is continuous with respect to $t_F$. 
When one fixes a closed subgroup $G<S_{\infty}$ and a structure, say 
$M_0$ on $\omega$ with $G=\mathsf{Aut}(M_0 )$ then a similar topology can be defined on the $G$-space of all $L$-expansions of $M_0$.  
Becker has noticed in \cite{becker} and \cite{becker2} that some basic 
model theoretic concepts and theorems can be formulated in topological terms concerning this $G$-space.   
Thus it becomes natural to consider the corresponding properties in the general case of Polish $G$-spaces. 
This approach is called {\em generalized model theory}. 
Imitating the construction of $t_F$, Becker introduces two central notions of this theory:  {\em nice basis and nice topology}. 
Many theorems of traditional model theory can be 
viewed as topological statements concerning 
spaces with nice topologies. 
Majcher-Iwanow extended Becker's approach in \cite{MI05} 
and \cite{MI08} to some other model theoretic and descriptive set theoretic issues, for example model completeness and  $\aleph_0$-categoricity.   
\parskip0pt 

In order to remove the assumption of Becker that $G\le S_{\infty}$,   
Ivanov and Majcher-Iwanow have applied in \cite{IMI-APAL} some methods of  continuous model theory.  
We now give a brief sketch of this idea. 
In the next section it will be described in detail.  

Let $(\mathcal{Y},d)$ be a Polish space and $\mathsf{Iso}(\mathcal{Y})$ 
be the corresponding isometry group 
endowed with the pointwise convergence topology. 
Then $\mathsf{Iso} (\mathcal{Y})$ is a Polish group. 
This will be a counterpart of $S_{\infty}$ above. 
For any countable continuous signature $L$ the set 
$\mathcal{Y}_L$ of all continuous metric $L$-structures on 
$(\mathcal{Y},d)$ can be considered as a Polish 
$\mathsf{Iso}(\mathcal{Y})$-space, which is also called the {\em logic  action}. 
As in the discrete case, fixing a closed subgroup 
$G\le\mathsf{Iso}(\mathcal{Y})$ and a structure $M_0$ on $\mathcal{Y}$ 
with $G=\mathsf{Aut}(M_0 )$, 
the $G$-space of all $L$-expansions of $M_0$ arises. 
It is crucial that by a result of Melleray from \cite{melleray} 
every Polish group $G$ can be realized as the automorphism 
group of a continuous metric structure on
an appropriate Polish space  $(\mathcal{Y},d)$.  
Furthermore, it is proved in \cite{CL} and \cite{IMI-APAL} 
that given closed $G\le \mathsf{Iso}(\mathcal{Y})$ 
the corresponding logic action is universal for Borel reducibility 
of orbit equivalence relations of Polish $G$-spaces.  
  
Note that for any tuple $\bar{s}\in \mathcal{Y}$ the map 
$g\rightarrow d(\bar{s},g(\bar{s}))$ can be considered 
as a {\em grey}/fuzzy subgroup of $G$. 
Grey subsets  and subgroups of metric spaces (groups) 
were introduced in \cite{BYM} as {\em graded} counterparts 
of subsets and subgroups. 
The map $g\rightarrow d(\bar{s},g(\bar{s}))$ is called 
the {\em grey stabilizer} of $\bar{s}$.  
For any continuous sentence $\phi$ the map 
$\mathcal{Y}_L \rightarrow [0,1]$ defined by assigning to $M$ 
the interpetation $\phi^M$,  
is a grey subset of  $\mathcal{Y}_L$.

Let us now start with an arbitrary Polish group $G$ and 
a Polish $G$-space $\mathcal{X}$. 
Distinguishing a family of grey subgroups of $G$ and an appropriate family 
$\mathcal{B}$ of grey subsets of $\mathcal{X}$ we arrive at 
the situation very similar to the one described above for the logic space $\mathcal{Y}_L$.  
For example, we can treat elements of $\mathcal{B}$ as 
continuous formulas. 
The conditions on $\mathcal{B}$ under which this approach allows simulation of traditional/continuous model theory
are precisely formulated in \cite{IMI-APAL}. 
They lead to 
{\em continuous versions of nice bases and nice topologies}. 
Then typical notions/properties naturally arising for logic actions  
can be applied in the general case of a Polish 
$G$-spaces. 
Furthermore, the existence theorem from \cite{IMI-APAL}  
states that under natural circumstances appropriately 
defined nice topologies can be always found. 
This gives the {\em continuous version of generalized model theory}.  

\paragraph{Towards Borel complexity. The Effros space. }
The aim of this paper is to demonstrate that the tools 
of generalized model theory nicely work for some  
other aspects of logic actions.  
To describe it let us fix a Polish space $\mathcal{Y}$. 
Then for each pair of natural numbers $k$ and $l$ the space 
$\mathcal{Y}^k_L\times  (\mathsf{Iso} (\mathcal{Y}))^l$ is 
also Polish. 
Any typical property of structures (automorphisms) defines a subset of 
$\mathcal{Y}^k_L\times  (\mathsf{Iso} (\mathcal{Y}))^l$.  
The question, when this subset is Borel (and what the Borel class of it is)  
was studied in Sections 3 - 4 of \cite{IMI-Arx}. 

In Sections 3 - 4 of the present paper we consider other applications of our approach. 
In particular, in Section 4 instead of the space 
$\mathcal{Y}^k_L\times  (\mathsf{Iso} (\mathcal{Y}))^l$ 
we now consider the Effros Borel structure on 
$\mathcal{F}(\mathcal{Y}_L )^k \times  \mathcal{F}(\mathsf{Iso} (\mathcal{Y}))^l$, 
where  $\mathcal{F}(\mathcal{X})$ is the set of closed subsets of 
$\mathcal{X}$. 
Here an elements of $\mathcal{F}(\mathcal{X})$ is a counterpart of a theory. 
We will see that some natural logic properties (mainly stability) 
appear in this space as topological objects involving  nice/good topologies.  
This gives a tool for evaluation of Borel complexity of these notions. 

Stability in the context of generalized model theory is the central topic of the paper. 
In Section 3 we consider stability in the case of the logic space $\mathcal{Y}_L$ (Definition \ref{preunst}). 
On the one hand, this notion obviously imitates the standard one, but on the other hand, it is formulated to be independent on the data (i.e. structure) given on $\mathcal{Y}$.  
This also leads us to the notion of relative stability.  
This is the main object of Section 3. 
In Sections 3.2 and 3.3 we consider some basic examples of this notion. 
In particular, in Section 3.2 we present an expansion of the Urysohn space $\mathfrak{U}$ by a kind of the generic subspace. 
We show that it is relatively stable. 
This is the most technical part of the paper.  
On the other hand, in Section 3.3 we will see that even in the classical case of first-order stability the corresponding relative notions can be nicely adapted and applied. 
\bigskip 

The paper is basically self-contained. 
It does not assume any special background.

\section{Logic space of continuous structures} 

The subject of our paper can be situated between 
Invariant Descriptive Set Theory (\cite{bk}, \cite{gao-book}, \cite{kechris}) 
and  Continuous Model Theory (\cite{BYBHU}, \cite{BNT}, \cite{BYU}). 
The definitions below can be found in these sources. 

\subsection{Polish group actions.}

A {\em Polish space (group)} is a separable, completely
metrizable topological space (group).
Sometimes we extend the corresponding metric to 
tuples by 
$$
d((x_1 ,...,x_n ), (y_1 ,...,y_n ))= \mathsf{max} (d(x_1 ,y_1 ),...,d(x_n ,y_n )). 
$$ 
If a Polish group $G$ continuously acts on a Polish space $\mathcal{X}$,
then we say that $\mathcal{X}$ is a {\em Polish $G$-space}. 
We  say that a subset of $\mathcal{X}$ is {\em invariant} if
it is $G$-invariant. \parskip0pt

Let $(\mathcal{Y},d)$ be a Polish space and $\mathsf{Iso}(\mathcal{Y},d)$ 
be the corresponding isometry group 
endowed with the pointwise convergence toplogy. 
Then $\mathsf{Iso} (\mathcal{Y},d)$ is a Polish group. 
We fix a compatible left-invariant metric $\rho$ on it and a dense countable set $\Upsilon \subset \mathsf{Iso}(\mathcal{Y},d)$. 
In any closed subgroups of $\mathsf{Iso}(\mathcal{Y},d)$ we distinguish 
the base consisting of all sets of the form 
$N_{\sigma ,q} = \{ \alpha : \rho (\alpha, \sigma )<q \}$, $\sigma \in \Upsilon$ 
and $q\in \mathbb{Q}$. 
We may assume that $\Upsilon$ is a subgroup of $\mathsf{Iso} (\mathcal{Y})$. 
(We will often omit $d$ in this expression.)

\subsection{Continuous structures.} 

The papers \cite{BYBHU} and \cite{BYU} are the main sources 
on continuous model theory which we use.   
The terms {\em continuous signature}, {\em metric $L$-structure}, 
{\em continuity moduli} and {\em formulas of continuous logic} 
are taken from these papers. 

{\em Statements} concerning metric structures are usually 
formulated in the form 
\[
\phi = 0 , 
\] 
where $\phi$ is a formula. 
Statements are often called {\em conditions} or $\bar{x}$-{\em conditions} 
(when $\phi$ depends on $\bar{x}$).  
If $\mathcal{K}$ is a class of continuous $L$-structures then 
$Th(\mathcal{ K})$ denotes the set of all conditions without free variables 
which hold in all structures of $\mathcal{ K}$. 
  
\begin{remark} \label{rel} 
{\em By Lemma 4.1 of \cite{BNT} each $n$-ary functional symbol $F$ can be replaced 
by the predicate  $D_F (\bar{x},y) = d(F(\bar{x}), y)$. 
It is clear that the continuity moduli with respect to variables from $\bar{x}$ are the same 
and $\mathsf{id}$ works as a continuity modulus for $y$. 
Thus we may always assume that $L$ is relational. }
\end{remark}

We sometimes replace conditions of the form $\phi \dot{-} \varepsilon =0$ 
where $\varepsilon \in [0,1]$ by more convenient 
expressions $\phi \le \varepsilon$.    

We often extend the set of formulas by the application 
of {\em truncated products} by positive rational numbers.   
This means that when $q\cdot x$ is greater than $1$, the 
truncated product of $q$ and $x$ is $1$. 
Since the context is always clear, we preserve the same notation $q\cdot x$. 
The continuous logic after this extension does not differ from 
the basic case. 

It is worth noting that the choice of the set of connectives as in \cite{BYBHU} and \cite{BYU}
guarantees that for any continuous relational structure $M$, 
any formula $\phi$ is a $\gamma$-uniform 
continuous function from the 
appropriate power of $M$ to $[0,1]$, where 
$\gamma (\varepsilon )$ is of the form  
$$ 
\frac{1}{n} \cdot  
\mathsf{min} \{ \gamma ' (\varepsilon ) : \gamma ' 
\mbox{ is a continuity modulus of an } 
L\mbox{-symbol appearing in the formula} \} , 
$$ 
where the number $n$ only depends on the complexity of  $\phi$. 
This follows from the fact that assuming $\phi_1$ and $\phi_2$ have 
continuity moduli $\gamma_1$ and $\gamma_2$ respectively, the formula 
$f(\phi_1 ,\phi_2 )$ obtained by applying a binary connective $f$ has continuity modulus 
$\mathsf{min} (\gamma_1 (\frac{1}{2} x) , \gamma_2 (\frac{1}{2} x) )$. 

It is observed in Appendix A of \cite{BYU} that instead of continuity moduli 
one can consider {\em inverse continuity moduli}. 
Slightly modifying that place in \cite{BYU} we define it as follows. 
\begin{itemize} \label{inverse} 
\item A continuous monotone function $\delta :[0,1]\rightarrow [0,1]$ with $\delta(0) =0$ 
is an inverse  continuity modulus of a map $F(\bar{x}) :  \mathcal{X}^n \rightarrow [0,1]$ 
if for any $\bar{a}$, $\bar{b}$ from $\mathcal{X}^n$, 
$$ 
|F(\bar{a} )-F(\bar{b})| \le \delta (d(\bar{a} ,\bar{b} )). 
$$ 
\end{itemize}  
In \cite{IMI-APAL} this notion was applied for the construction of certain subgroups of isometries, associated to continuous sentences. 
This material will be recalled in Sections 2.1 - 2.2. 
The choice of the connectives above guarantees that 
the following statement holds, see  \cite{IMI-APAL}.  

\begin{lem} \label{ContMod} 
For any continuous relational structure $M$, 
where each $n$-ary relation has $n\cdot \mathsf{id}$ as its inverse 
continuity modulus, every formula $\phi$ admits an inverse continuity 
modulus which is of the form $k \cdot \mathsf{id}$, where $k$ 
depends on the complexity of $\phi$. 
\end{lem}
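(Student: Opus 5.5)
We argue by induction on the complexity of $\phi$, proving the slightly stronger claim that every formula $\phi(x_1,\ldots,x_n)$ admits an inverse continuity modulus $k_\phi\cdot\mathsf{id}$ (truncated at $1$, as in our convention on truncated products), where $k_\phi$ depends only on the construction of $\phi$. Throughout, $d$ on tuples is the maximum metric fixed in Section 2.1, so the coordinatewise distances are bounded by $d(\bar a,\bar b)$. Variables not actually occurring in a subformula are harmless: a map that depends only on some coordinates inherits any modulus from the sub-tuple, because the max metric on the sub-tuple is at most the max metric on the full tuple.

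For the base cases, an atomic formula $R(x_{i_1},\ldots,x_{i_m})$ with $R$ an $m$-ary symbol has, by hypothesis, inverse modulus $m\cdot\mathsf{id}$ with respect to the tuple of its arguments. Repeated variables and the reordering of variables do not increase the max distance. Hence $m\cdot\mathsf{id}$ works relative to $\bar x$. The metric symbol satisfies $|d(a_1,a_2)-d(b_1,b_2)|\le d(a_1,b_1)+d(a_2,b_2)\le 2\,d(\bar a,\bar b)$, which gives $2\cdot\mathsf{id}$. Relations coming from function symbols via Remark \ref{rel} are covered by the hypothesis as well.

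For the connective step, use the standard connectives of \cite{BYBHU}, \cite{BYU}: $\neg x = 1-x$, $x\dot- y$, $\frac12 x$, constants, and the truncated products $q\cdot x$ introduced above.
\begin{itemize}
\item Each of these connectives is Lipschitz in each argument, with constant $1$, except $q\cdot x$, which has constant $q$.
\item If $\phi_1,\phi_2$ have moduli $k_1\cdot\mathsf{id}$ and $k_2\cdot\mathsf{id}$, then $|\phi_1\dot-\phi_2(\bar a)-\phi_1\dot-\phi_2(\bar b)|\le (k_1+k_2)\,d(\bar a,\bar b)$.
\item Similarly $q\cdot\phi_1$ gets $q k_1\cdot\mathsf{id}$, and $\neg\phi_1$ and $\frac12\phi_1$ keep $k_1$.
\end{itemize}
Since $\min(1,\cdot)$ is monotone, truncating at $1$ preserves the inequality, so each resulting function is again a legitimate continuous monotone $\delta$ with $\delta(0)=0$ into $[0,1]$. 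The same argument covers any continuous connective we allow, provided it is Lipschitz; this is where the choice of connectives mentioned before the lemma is used.

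The quantifier step is the one to watch, and it is the main point to check carefully. Let $\psi(\bar x)=\sup_y\phi(\bar x,y)$, and symmetrically $\inf_y$. For every $c$ we have $d((\bar a,c),(\bar b,c))=d(\bar a,\bar b)$ in the max metric, which is exactly why the max metric was chosen. Hence $\phi(\bar a,c)\le\phi(\bar b,c)+k_\phi d(\bar a,\bar b)\le\psi(\bar b)+k_\phi d(\bar a,\bar b)$. Taking the supremum over $c$ and exchanging the roles of $\bar a$ and $\bar b$ gives $|\psi(\bar a)-\psi(\bar b)|\le k_\phi d(\bar a,\bar b)$. So quantifiers do not increase $k$. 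Therefore $k_\phi$ is computed recursively from the atomic arities and the connectives used, and in particular it depends only on the complexity of $\phi$, as claimed.
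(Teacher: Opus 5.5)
Your proof is correct and follows the route the paper indicates. The paper gives no proof beyond citing \cite{IMI-APAL} and the preceding remark that a connective applied to $\phi_1,\phi_2$ combines their moduli (as $\mathsf{min}(\gamma_1(\frac{1}{2}x),\gamma_2(\frac{1}{2}x))$), which is the induction on complexity you carry out; your explicit quantifier step via the max metric, and your observation that the lemma needs the allowed connectives to be Lipschitz, fill in what the paper leaves implicit.
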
 

For a continuous structure $M$ defined on $(\mathcal{Y},d)$ let 
$\mathsf{Aut}(M)$ be the subgroup of $\mathsf{Iso} (\mathcal{Y})$ consisting of 
all isometries preserving the values of atomic formulas. 
It is easy to see that $\mathsf{Aut}(M)$ is a closed subgroup with 
respect to the topology on $\mathsf{Iso} (\mathcal{Y})$ defined above. 

For every $c_1 ,\ldots ,c_n \in M$ and $A\subseteq M$ 
we define the $n$-type $\mathsf{tp}(\bar{c}/A)$ of $\bar{c}$ over $A$ 
to be the set of all $\bar{x}$-conditions with parameters from $A$ 
which are satisfied by $\bar{c}$ in $M$.  
Let $S_n (T_A )$ be the set of all $n$-types over $A$ 
of the expansion of the theory $T$ by constants from $A$. 
There are two natural topologies on this set. 
The {\em logic topology} is defined by the basis consisting of 
sets of types of the form $[\phi (\bar{x})<\varepsilon ]$, 
i.e. types containing some $\phi (\bar{x})\le \varepsilon'$ with 
$\varepsilon '<\varepsilon$.   
The logic topology is compact. 

The $d$-topology is defined by the metric 
$$
d(p,q)= \mathsf{inf} \{  d(\bar{c} ,\bar{b})| \mbox{ there is a model } M \mbox{ with } M\models p(\bar{c})\wedge q(\bar{b})\}. 
$$ 
By Propositions 8.7 and 8.8 of \cite{BYBHU} the $d$-topology is finer 
than the logic topology and $(S_n (T_A ),d)$ is a complete space. 

The following notion is helpful when we study some concrete 
examples, for example the Urysohn space. 
A relational continuous structure $M$ is 
{\em approximately ultrahomogeneous} if for any $n$-tuples 
$(a_1 ,\ldots ,a_n )$ and $(b_1 ,\ldots ,b_n )$ with the same 
quantifier-free type (i.e. with the same values of 
predicates for corresponding subtuples) and any 
$\varepsilon >0$ there exists $g\in \mathsf{Aut}(M)$ such that 
$$ 
\mathsf{max} \{ d(g(a_j ),b_j ): 1\le j \le n\} \le \varepsilon . 
$$  
As we already mentioned any Polish group can be chosen 
as the automorphism group of a continuous metric 
structure which is approximately ultrahomogeneous.  

The bounded Urysohn space $\mathfrak{U}$ (see Section 2.3) 
is {\em ultrahomogeneous} in the traditional sense: 
any partial isomorphism between two tuples extends 
to an automorphism of the structure \cite{U}.  
Note that this obviously implies that $\mathfrak{U}$ is 
approximately ultrahomogeneous. 

\bigskip 

In some places of the paper we use continuous 
$L_{\omega_1 \omega}$-logic. 
It extends the first-order logic by new connectives applied to 
countable families of formulas : $\bigvee$ is the infinitary 
$\mathsf{min}$ and $\bigwedge$ corresponds to the infinitary $\mathsf{max}$.  
The version of $L_{\omega_1 \omega}$ studied in \cite{BYIov} (see also \cite{BNT}) has a restriction that 
when these connectives are applied we demand that the formulas of the family all obey the same continuity modulus.

\subsection{Logic action} \label{LA}

Fix a countable continuous signature 
\[
L=\{ d,R_1 ,\ldots ,R_k ,\ldots , F_1 ,\ldots , F_l ,\ldots \}
\] 
and a Polish space $(\mathcal{Y},d)$. 
Let $S$ be a dense countable subset of $\mathcal{Y}$. 
Let $\mathsf{seq}(S)=\{ \bar{s}_i :i\in \omega\}$ be the set (and an enumeration) 
of all finite sequences (tuples) from $S$. 
Let us define the space of metric $L$-structures on $(\mathcal{Y},d)$. 
We introduce a metric on the set of $L$-structures as follows 
\footnote{in fact, using the recipe of the standard metric on $\mathsf{Iso}(\mathcal{Y})$}. 
Enumerate all tuples of the form $(\varepsilon ,j,\bar{s})$, where  
$\varepsilon \in \{ 0,1\}$ and when $\varepsilon =0$,  $\bar{s}$ is a tuple 
from $\mathsf{seq}(S)$ of the length of the arity of $R_j$,  and for $\varepsilon =1$, 
$\bar{s}$ is a tuple from $\mathsf{seq}(S)$ of the length of the arity of $F_j$.  
For metric $L$-structures $M$ and $N$ let   
\[ 
\delta_{\mathsf{seq}(S)} (M ,N )= \sum_{i=1}^{\infty} \{ 2^{-i} |R^M_j (\bar{s} )-R^N_j (\bar{s} )| \, \, :  \, \, 
\footnote{resp. $ 2^{-i} d(F^M_j (\bar{s} ),F^N_j (\bar{s} ))$ when $\varepsilon =1$}  
\mbox{ : }  i \mbox{ is the number of } (\varepsilon ,j,\bar{s}) \} . 
\] 
Since the predicates and functions are uniformly continuous 
(with respect to moduli of $L$) and $S$ is dense 
in $\mathcal{Y}$, we see that $\delta_{\mathsf{seq}(S)}$ is a complete metric. 
Furthermore, by an appropriate choice of rational values for 
$R_j (\bar{s})$ we find a countable dense subset of metric 
structures on $\mathcal{Y}$, i.e. the space obtained is Polish. 
We denote it by $\mathcal{Y}_L$. 
It is clear that $\mathsf{Iso} (\mathcal{Y})$ acts on $\mathcal{Y}_L$ continuously. 
Thus we consider $\mathcal{Y}_L$ as an $\mathsf{Iso}(\mathcal{Y})$-space and 
call it the {\em space of the logic action} on $\mathcal{Y}$.

It is convenient to consider the following {\em basis} 
of the topology of $\mathcal{Y}_L$. 
Fix a finite sublanguage $L'\subset L$, a finite subset 
$S'\subset S$, a finite tuple $q_1 ,\ldots ,q_t \in \mathbb{Q} \cap [0,1]$ 
and a rational $\varepsilon \in [0,1]$ with $1-\varepsilon < 1/2$.  
Consider a finite diagram $D$ of $L'$ on $S'$ of some inequalities of the form   
\[
d(F_j (\bar{s}) ,s' ) > \varepsilon \mbox{ , } d(F_j (\bar{s}) ,s' )< 1 - \varepsilon , 
\]
\[ 
|R_j (\bar{s}) - q_i | > \varepsilon \mbox{ , } |R_j (\bar{s}) - q_i |< 1 - \varepsilon ,\mbox{ with } \bar{s}\in \mathsf{seq}(S'), s'\in S'  
\]
(i.e. in the case of relations we consider 
negations of statements of the form: 
$|R_j (\bar{s}) - q_i |\le \varepsilon$ , 
$|R_j (\bar{s}) - q_i |\ge 1 - \varepsilon$).  
The set of metric $L$-structures realizing $D$ is 
an open set of the  topology of $\mathcal{Y}_L$, and 
the family of sets of this form is a basis of this topology.   
Compactness theorem for continuous logic (see \cite{BYU}) 
shows that the topology is compact. 
We will also call it the {\em logic topology}. 

We also mention Theorem 2.2 from \cite{CL} that 
for every Polish group $G$ and every its Borel action on a standard Borel space 
$\mathcal{X}$ there exists an $L_{\omega_1\omega}$-sentence $\phi$ 
and a continuous embedding of $G$ into some $\mathsf{Iso}(\mathcal{Y})$ 
such that the action of $G$ on $\mathcal{X}$ arises 
as the action of $G$ on the subspace of $\mathcal{Y}_L$ of models of $\phi$. 
It is a consequence of the continuous version of the Lopez-Escobar theorem 
(\cite{BNT}, \cite{CL}). 
This issue will be also discussed in Section 2.4. 

\section{Good  (almost nice) topologies} 

In Section 2.2 we give the main concepts of 
the generalized model theory. 
They are based on the notion of a grey subset. 
The corresponding preliminaries are given in Section 2.1. 
In Section 2.3 we describe the most important examples arising  
in generalized continuous model theory. 

It is worth mentioning that the crucial notions of generalized model theory are nice topologies and nice bases, see  \cite{becker2} and \cite{IMI-APAL}. 
In the present paper we do not give the definition of a nice basis in detail. 
The reason is that the arguments where this term appears, do not use the definition in full. 
Usually these arguments can be realized for the weaker notion of {\em good bases}.  

\subsection{Grey subsets} 

The notion of grey subsets was introduced in \cite{BYM} and applied in \cite{BNT}, \cite{CL}, \cite{IMI-APAL}, \cite{Chen}. 
 
A {\em grey subset} of a space $\mathcal{X}$, denoted 
$\phi \sqsubseteq \mathcal{X}$, is a function from $\mathcal{X}$ to  $[0,\infty ]$. 
\begin{itemize} 
\item For $r\in \mathbb{R}$ the set  $\phi_{<r} = \{ z\in \mathcal{X}: \phi (z) <r \}$ is called a {\em  cone} of $\phi$. 
Cones $\phi_{\le r}$ (closed cone) and $\phi_{> r}$, $\phi_{\ge r}$ are defined in a similar way. 
\item The grey subset $\phi$ is {\em open }(resp. {\em closed)}, 
$\phi \sqsubseteq_o \mathcal{X}$ (resp. $\phi \sqsubseteq_c \mathcal{X}$), 
if the cone $\phi_{<r}$ (resp. $\phi_{>r}$) is open for all $r\in \mathbb{R}$. 
\end{itemize} 

We also write $\phi \in {\bf \Sigma}_1$ when 
$\phi \sqsubseteq_o \mathcal{X}$ and we write 
$\phi \in {\bf \Pi}_1$ when $\phi \sqsubseteq_c \mathcal{X}$.  
We will assume below that values of 
a grey subset belong to $[0,1]$. 
Then Borel classes of grey sets are defined as follows. 
\begin{itemize} 
\item $\phi \in  {\bf \Pi}_{\alpha}$ iff $1- \phi \in {\bf \Sigma}_{\alpha}$ and
\item $\phi \in {\bf \Sigma}_{\alpha}$ iff $\phi = \mathsf{inf}_n \phi_n$ 
where $\phi_n \in \bigcup_{\beta <\alpha} {\bf \Pi}_{\beta}$.
\end{itemize} 

Let us return to the situation of Section \ref{LA}. 
We fix a language $L$, a countable dense subset $S$ of $\mathcal{Y}$ 
and study subsets of  $\mathcal{Y}_L$. 
One of the basic observations is that any first-order continuous sentence  
$\phi (\bar{c})$, $\bar{c}\in S$, defines a grey subset of $\mathcal{Y}_L$: 
$$
\phi (\bar{c}) \mbox{ takes } M \mbox{ to the value } \phi^{M} (\bar{c}). 
$$ 
Moreover, Proposition \ref{EsLo} below says that   
$\phi (\bar{c})$ defines a grey subset of $\mathcal{Y}_L$ which belongs 
to ${\bf \Sigma}_n$  for some $n$.  
It is Proposition 1.1 in \cite{IMI-APAL}. 

\begin{prop} \label{EsLo}
For any continuous formula $\phi(\bar{v})$ of the 
language $L$ there is a natural number $n$ such that 
for any tuple $\bar{a}\in S$ 
and $\varepsilon \in [0,1]$, the subset 
$$
Mod(\phi ,\bar{a},<\varepsilon )=\{ M :M \models\phi (\bar{a})<\varepsilon \} 
$$
$$
\mbox{ ( or } 
Mod(\phi ,\bar{a},>\varepsilon )=\{ M :M \models\phi (\bar{a})>\varepsilon \} \mbox{ ) } 
$$ 
of the space $\mathcal{Y}_L$ of $L$-structures, belongs to ${\bf \Sigma}_n$. 
\end{prop}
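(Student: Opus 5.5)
We argue by induction on the complexity of $\phi$, proving simultaneously for both kinds of cones: for every formula $\phi(\bar v)$ there is $n$ such that for all $\bar a\in S$ and all $\varepsilon$ both $Mod(\phi,\bar a,<\varepsilon)$ and $Mod(\phi,\bar a,>\varepsilon)$ are in ${\bf \Sigma}_n$. Since ${\bf \Sigma}_n\subseteq{\bf \Sigma}_{n+1}$ and ${\bf \Pi}_n \subseteq {\bf \Sigma}_{n+1}$, we may always enlarge $n$, which lets us handle the two kinds of cones uniformly. By Remark \ref{rel} we assume $L$ is relational, so atomic formulas are $R_j(\bar t)$ and $d(t_1,t_2)$ with variables substituted by elements of $S$.

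\emph{Atomic case.} For $\bar s\in S$ the map $M\mapsto R^M_j(\bar s)$ is continuous on $\mathcal{Y}_L$: it is one of the coordinates in the definition of $\delta_{\mathsf{seq}(S)}$, and $|R^M_j(\bar s)-R^N_j(\bar s)|\le 2^{i}\delta_{\mathsf{seq}(S)}(M,N)$. Hence both cones are open, i.e.\ in ${\bf \Sigma}_1$. The formula $d(a,b)$ is constant, so its cones are trivially open. Since atomic formulas are evaluated only at tuples from $S$, this is where the choice of the dense set $S$ enters.

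\emph{Connectives.} Let $f:[0,1]^m\to[0,1]$ be continuous and $\phi=f(\phi_1,\ldots,\phi_m)$. The set $\{\bar x: f(\bar x)<\varepsilon\}$ is open in $[0,1]^m$, so it is a countable union of rational boxes $\prod_i (p_i,r_i)$. Therefore $Mod(\phi,\bar a,<\varepsilon)$ is a countable union of finite intersections of sets $Mod(\phi_i,\bar a,>p_i)\cap Mod(\phi_i,\bar a,<r_i)$. If all of these lie in ${\bf \Sigma}_n$, the finite intersections remain in ${\bf \Sigma}_n$ and the countable union does too (for $n\ge 1$). The set $Mod(\phi,\bar a,>\varepsilon)$ is handled identically. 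So no level is lost at this step.

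\emph{Quantifiers: the key step.} Let $\phi(\bar v)=\sup_x\psi(x,\bar v)$. The main point is that in every $M\in\mathcal{Y}_L$ the formula $\psi(x,\bar a)$ is uniformly continuous in $x$ (with a modulus depending only on $\psi$ and $L$, as discussed before Lemma \ref{ContMod}). Since $S$ is dense in $\mathcal{Y}$, $\phi^M(\bar a)=\sup_{c\in S}\psi^M(c,\bar a)$, and similarly for $\inf$. Then
\[
Mod(\phi,\bar a,>\varepsilon)=\bigcup_{c\in S}Mod(\psi,c\bar a,>\varepsilon)\in{\bf \Sigma}_n ,
\]
while
\[
Mod(\phi,\bar a,<\varepsilon)=\bigcup_{q<\varepsilon,\ q\in\mathbb{Q}}\ \bigcap_{c\in S}\{M:\psi^M(c,\bar a)\le q\}.
\]
Each set $\{M:\psi^M(c,\bar a)\le q\}$ is the complement of $Mod(\psi,c\bar a,>q)\in{\bf \Sigma}_n$, hence lies in ${\bf \Pi}_n$. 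A countable intersection of ${\bf \Pi}_n$ sets is ${\bf \Pi}_n$, and a countable union of such sets is in ${\bf \Sigma}_{n+1}$. The $\inf$ case is symmetric. So each quantifier raises the level by at most one, and $n$ depends only on $\phi$ (roughly its quantifier depth plus one), not on $\bar a$ or $\varepsilon$.

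The only delicate point is the replacement of $\sup$ over $\mathcal{Y}$ by $\sup$ over $S$. This requires that every structure in $\mathcal{Y}_L$ interpret formulas continuously, which holds because the structures obey the continuity moduli fixed by $L$.
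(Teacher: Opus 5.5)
Your argument is correct. The simultaneous induction on both kinds of cones replaces $\sup_x$ and $\inf_x$ by $\sup_{c\in S}$ and $\inf_{c\in S}$, using continuity of $\psi^M(\cdot,\bar a)$ and density of $S$, so each quantifier costs at most one Borel level uniformly in $\bar a$ and $\varepsilon$. The reduction to a relational $L$ is also harmless: the map $M\mapsto M^*$ replacing $F$ by $D_F$ is continuous, and continuous preimages preserve ${\bf \Sigma}_n$.

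The paper gives no proof of its own here; it only quotes the statement as Proposition 1.1 of the earlier Ivanov--Majcher-Iwanow paper. Your induction on complexity is the natural argument behind that citation.
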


When $G$ is a Polish group, then a grey subset 
$H \sqsubseteq G$ is called a {\em grey subgroup} if 
$$
H(1)=0 \mbox{ , }\forall g\in G (H(g)=H(g^{-1})) \mbox{ and } 
\forall g, g'\in G (H(gg')\le H(g)+H(g')). 
$$ 
This is equivalent to Definition 2.5 from \cite{BYM}. 
It is worth noting that by Lemma 2.6 of \cite{BYM} 
an open grey subgroup is clopen. 

\bigskip 

If $H$ is a grey subgroup, then for every $g\in G$ we define 
the grey coset $Hg$ and the grey conjugate $H^g$ as follows:
$$
\begin{array}{l@{\ = \ }l}
Hg(h)&H(hg^{-1})\\
H^g(h)&H(ghg^{-1}).
\end{array}
$$
Observe that if $H$ is open, then $Hg$ is an open grey subset and
$H^g$ is an open grey subgroup.

\begin{definition} 
Let $\mathcal{X}$ be a continuous $G$-space.  
A grey subset $\phi \sqsubseteq \mathcal{X}$ is called  {\sl invariant} with 
respect to a grey subgroup $H \sqsubseteq G$ if 
for any $g\in G$ and $x\in \mathcal{X}$ we have $\phi (g(x)) \le \phi (x) \dot+ H(g)$. 
\end{definition}  

Since $H(g)=H(g^{-1})$, the inequality from the definition is 
equivalent to $\phi (x) \le \phi (g(x)) \dot+ H(g)$. 

\bigskip 

\begin{remark} {\em (see Section 2.1 of \cite{IMI-APAL}).
It is clear that for every continuous structure $M$ 
(defined on $\mathcal{Y}$) every continuous formula $\phi (\bar{x})$ 
defines a clopen grey subset of $M^{|\bar{x}|}$. 
Furthermore, note that when $\phi (\bar{x},\bar{c})$ is a continuous 
formula with parameters $\bar{c}\in M$ and $\delta$ is a linear inverse 
continuous modulus for $\phi (\bar{x}, \bar{y})$ (see Definition \ref{inverse}), 
then $\phi$ is invariant with respect to the open grey subgroup 
$H_{\delta, \bar{c}} \sqsubseteq \mathsf{Aut}(M)$ defined by  
$$
H_{\delta, \bar{c}}(g) =\delta ( d((c_1 ,\ldots ,c_n ), (g(c_1 )),\ldots ,g(c_n ))) \mbox{, where } g\in \mathsf{Aut}(M) ,
$$
i.e.  
$$
\phi(g(\bar{a}),\bar{c}) \le \phi (\bar{a},\bar{c}) + H_{\delta ,\bar{c}}(g) .    
$$ 
}  
\end{remark} 
\bigskip 
 
In the space of continuous $L$-structures $ \mathcal{Y}_L$ this 
remark has the following version (see Lemma 2.2 in \cite{IMI-APAL}). 

\begin{lem} 
Let $\delta$ be an inverse continuity modulus for $\phi (\bar{x})$, 
which is linear. 
Let the {\sl grey stabilizer}    
$H_{\delta ,\bar{c}}\sqsubseteq \mathsf{Iso}(\mathcal{Y})$ be defined 
as follows: 
\[
H_{\delta ,\bar{c}}(g) =\delta ( d((c_1 ,\ldots ,c_n ), (g(c_1 )),\ldots ,g(c_n ))) \mbox{, where } g\in \mathsf{Iso}(\mathcal{Y}) . 
\] 
Then the grey subset defined by $\phi (\bar{c}) \sqsubseteq \mathcal{Y}_L$ 
is invariant with respect to  $H_{\delta ,\bar{c}}$. 
\end{lem}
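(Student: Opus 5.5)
We have to show that for every $g\in\mathsf{Iso}(\mathcal{Y})$ and every $M\in\mathcal{Y}_L$,
$$
\phi^{g(M)}(\bar{c}) \le \phi^{M}(\bar{c}) \dot+ \delta\big(d(\bar{c},g(\bar{c}))\big).
$$
The plan is to reduce this to the previous remark by transporting the structure along the isometry $g$.

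The key step is to fix the convention for the logic action. It is $g(M)$, the structure on $\mathcal{Y}$ with $R^{g(M)}(\bar{a}) = R^M(g^{-1}(\bar{a}))$ for each relation symbol $R$ (the language may be taken relational by Remark \ref{rel}). With this definition $g:M\to g(M)$ is an isomorphism of metric $L$-structures. An induction on the complexity of formulas then gives, for every formula $\psi(\bar{x})$ and tuple $\bar{a}$,
$$
\psi^{g(M)}(g(\bar{a})) = \psi^{M}(\bar{a}).
$$
The atomic case is the definition together with the fact that $g$ preserves $d$. Connectives commute with the equality. For quantifiers, $\sup_y$ and $\inf_y$ range over the same set $\mathcal{Y}$, because $g$ is a bijection of $\mathcal{Y}$.

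Next, using this isomorphism invariance with $\bar{a}=g^{-1}(\bar{c})$, we get
$$
\phi^{g(M)}(\bar{c}) = \phi^{M}(g^{-1}(\bar{c})).
$$
Now apply the defining property of an inverse continuity modulus to $\phi$ in the structure $M$:
$$
|\phi^M(g^{-1}(\bar{c}))-\phi^M(\bar{c})| \le \delta\big(d(g^{-1}(\bar{c}),\bar{c})\big).
$$
Since $g$ is an isometry, $d(g^{-1}(\bar{c}),\bar{c}) = d(\bar{c},g(\bar{c}))$, and so the right-hand side equals $H_{\delta,\bar{c}}(g)$. Since $\phi$ takes values in $[0,1]$, the bound $\phi^{g(M)}(\bar{c})\le \min(1,\phi^M(\bar{c})+H_{\delta,\bar{c}}(g))$ is exactly the truncated sum, which gives the required inequality.

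The main subtlety is that $\delta$ must serve as an inverse continuity modulus for $\phi$ uniformly across all structures $M\in\mathcal{Y}_L$, not just one structure. This is guaranteed by Lemma \ref{ContMod} and the preceding discussion: the modulus depends only on the continuity moduli of the symbols fixed by $L$ and on the complexity of $\phi$. We therefore read the hypothesis in this uniform sense, with $\delta$ of the form $k\cdot\mathsf{id}$.

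As a side check, $H_{\delta,\bar{c}}$ is indeed a grey subgroup. We have $H_{\delta,\bar{c}}(1)=0$. Symmetry holds because $d(\bar{c},g^{-1}\bar{c}) = d(g\bar{c},\bar{c})$. Subadditivity follows from linearity of $\delta$ and the triangle inequality $d(\bar{c},gg'\bar{c})\le d(\bar{c},g\bar{c})+d(g\bar{c},gg'\bar{c}) = d(\bar{c},g\bar{c})+d(\bar{c},g'\bar{c})$. This is not needed for the invariance itself.
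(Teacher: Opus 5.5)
Your proof is correct and follows the mechanism the paper relies on; the paper gives no separate proof and cites Lemma 2.2 of \cite{IMI-APAL}, with the preceding Remark as the $\mathsf{Aut}(M)$ version. As there, you transport along the isomorphism $g\colon M\to g(M)$ to get $\phi^{g(M)}(\bar c)=\phi^M(g^{-1}(\bar c))$, then apply the inverse continuity modulus in $M$ together with $d(g^{-1}(\bar c),\bar c)=d(\bar c,g(\bar c))$. You are also right that $\delta$ must be read as uniform over all $M\in\mathcal{Y}_L$; this is justified by the discussion preceding Lemma \ref{ContMod}.
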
 

\subsection{Good bases}

We start with the following definition. 
 
\begin{definition}   \label{gbasis} 
A family $\mathcal{U}$  of open grey subsets of a Polish space 
$\mathcal{X}$ with a topology $\tau$ is called a {\sl grey basis}   
of $\tau$ if the family 
$\{\phi_{<r}:\phi\in{\mathcal{U}}, r\in {\mathbb{Q}}\cap (0,1)\}$ 
is a basis of $\tau$.
\end{definition} 

We now describe our typical assumptions on $G$:  
\begin{itemize} 
\item $G$ is a Polish group; 
\item we distinguish a countable dense subgroup $G_0 <G$ and 
a countable family of clopen grey subsets $\mathcal{R}$ of $G$ 
which is a grey basis of the topology of $G$; 
\item we assume that $\mathcal{R}$ consists of all  
$G_0$-cosets of grey subgroups from $\mathcal{R}$, i.e. 
for each $\rho\in\mathcal{R}$ there is a grey subgroup 
$H\in \mathcal{R}$ and an element $g_0 \in G_0$ 
so that for any $g\in G$, $\rho (g ) = H(g g^{-1}_0 )$; 
\item  we assume that $\mathcal{R}$ is closed under 
$G_0$-conjugacy, under $\mathsf{max}$ and truncated 
multiplication by positive rational numbers. 
\end{itemize}  

\begin{remark}\label{Gbasis}  
{\em In Remark 2.9 of \cite{IMI-APAL}
it is observed that for every Polish group $G$ there is a a countable $G_0 <G$ and 
a countable family of open grey subsets $\mathcal{R}$ 
satisfying these assumptions. }
\end{remark} 

When we fix $G_0$, $\mathcal{R}$ and consider 
a Polish $G$-space $(\mathcal{X},d)$
we also distinguish a countable grey basis  
$\mathcal{U}$ of the topology of $\mathcal{X}$.  
Let $\tau$ be the corresponding topology.  

In  generalized model theory the space $(\mathcal{X}, \tau )$ 
is considered together with some new topology which is called {\em nice};   
see \cite{becker2} for the case of $G$-spaces with non-archimedian $G$.  
In the general case of Polish $G$-spaces this idea has been 
realized in \cite{IMI-APAL} using continuous logic.   
Since we do not need the corresponding material in 
exact form we give the following very general definition. 

\begin{definition}  \label{NB}
Let $\mathcal{R}$ be a grey basis of $G$ consisting of cosets of 
open grey subgroups of $G$ which also belong to $\mathcal{R}$. 
Assume that the subfamily of $\mathcal{R}$ of all open grey subgroups 
is closed under $\mathsf{max}$ and truncated multiplication by numbers from 
$\mathbb{Q}^{+}$. 

We say that a family $\mathcal{B}$ of Borel grey subsets
of the $G$-space $(\mathcal{X}, \tau )$ is a {\sl good basis}  
with respect to $\mathcal{R}$ if: \\ 
(i) $\mathcal{B}$ is countable and generates a topology finer than $\tau$;\\ 
(ii) for each $\phi\in \mathcal{B}$ there exists an open grey 
subgroup $H\in \mathcal{R}$ such that $\phi$ is $H$-invariant.
\end{definition}

It will be usually assumed that 
all constant functions $q$, $q\in \mathbb{Q}\cap [0,1]$ 
are in $\mathcal{B}$.  

\begin{definition} \label{nto}
A topology ${\bf t}$ on $\mathcal{X}$ is $\mathcal{R}$-{\sl good} for the $G$-space $\langle \mathcal{X}, \tau\rangle$ if the following conditions are satisfied.\\
(a) The topology ${\bf t}$ is Polish, ${\bf t}$ is finer than $\tau$
and the $G$-action remains continuous with respect to ${\bf t}$. \\
(b) There exists a grey basis $\mathcal{B}$ of ${\bf t}$ which is good with respect to $\mathcal{R}$. 
\end{definition} 

Nice bases and nice topologies introduced in \cite{IMI-APAL} are $\mathcal{R}$-good. 
We do not reconstruct the corresponding definitions. 
It is worth noting here that it is usually a challenge to prove that a given basis is nice.  
In opposite, in the situation of standard examples (see Section 2.3 below) 
the property that the basis is good is straightforward. 

\subsection{Countable approximating substructures} \label{CAS} 

In this section we give basic examples of good bases 
and topologies on some logic spaces.  
They are built by an application of a certain  construction to different platform spaces. 
Platforms which we consider are standard examples of Polish spaces studied in continuous logic, see \cite{BYBHU}, \cite{BYBM} and \cite{BYU}. 
The logic space based on the Urysohn sphere has been already studied in this context, see \cite{CL}, \cite{IMI-APAL}. 
 
To describe the general idea of  the construction (according to \cite{IMI-APAL} and \cite{IMI-Arx}), 
we start with the following definition from \cite{BYBM}. 
\begin{definition} 
Let $(\mathcal{M},d)$ be a Polish metric structure of countable language with the universe $\mathcal{M}$. 
We say that a (classical) countable structure $N$ is a {\sl countable approximating substructure} 
of $\mathcal{M}$ if the following conditions are satisfied: 
\begin{itemize}  
\item The universe of $N$ is a dense countable subset of
$(\mathcal{M}, d)$.  
\item Any automorphism of $N$ extends to a (necessarily unique) 
automorphism of $\mathcal{M}$, and $\mathsf{Aut}(N)$ is dense in
$\mathsf{Aut}(\mathcal{M})$.
\end{itemize} 
\end{definition} 

Let $G_0$ be a dense countable subgroup of $\mathsf{Aut}(N)$.  
We may consider it as a subgroup of $\mathsf{Aut} (\mathcal{M})$. 

{\em \underline{Family $\mathcal{R}^{\mathcal{M}}(G_0 )$}.} 
Let $\mathcal{R}_0$ be the family of all clopen 
grey subgroups of $\mathsf{Aut}(\mathcal{M})$ of the (truncated) form 
\[
H_{q, \bar{s}} : g\rightarrow q \cdot d(g(\bar{s}), \bar{s}), 
\mbox{ where } \bar{s}\subset N, \mbox{ and } q\in \mathbb{Q}^{+}.   
\] 
It is clear that $\mathcal{R}_0$ is closed under 
conjugacy by elements of $G_0$. 
Consider the closure of $\mathcal{R}_0$ under 
the function $\mathsf{max}$ and define $\mathcal{R}^{\mathcal{M}}(G_0 )$ 
to be the family of all $G_0$-cosets of grey 
subgroups from $\mathsf{max}(\mathcal{R}_0 )$. 
Then $\mathcal{R}^{\mathcal{M}} (G_0 )$ is countable and the 
family of all $(H_{q, \bar{s}})_{<l}$ where 
$H\in \mathcal{R}_0$ and $l\in \mathbb{Q}$, 
generates the topology of $\mathsf{Aut}(\mathcal{M},d)$.  
Furthermore, it is easy to see that $G_0$ and 
$\mathcal{R}^{\mathcal{M}} (G_0 )$ satisfy all the conditions of/before  
Remark \ref{Gbasis} for $\mathcal{R}$. 

{\em \underline{Family $\mathcal{B}_{\mathcal{L}}$}. } 
Let $L$ be a relational language of a countable continuous signature with 
inverse continuity moduli $\le n\cdot \mathsf{id}$ for $n$-ary relations. 
We will assume that $L$ extends the language of the structure $\mathcal{M}$. 

Let $\mathcal{L}$ be a countable fragment of $L_{\omega_1 \omega}$, 
in particular $\mathcal{L}$ be closed under first-order connectives. 
Note that inverse continuity moduli of first-order continuous formulas 
(with connectives as in Introduction) can be taken linear 
(of the form $k\cdot \mathsf{id}(x)$). 
Thus it is easy to see that  every formula of $\mathcal{L}$ 
has linear inverse continuity moduli. 
In this place we apply the material of Section 2.1. 
The next paragraph is also based on it. 

Let $\mathcal{B}_{\mathcal{L}}$ be the family of all grey subsets defined 
on the logic space $\mathcal{M}_L$ by continuous $\mathcal{L}$-sentences (with parameters) 
as follows  
$$
\phi (\bar{s}) : M\rightarrow \phi^{M} (\bar{s}), 
\mbox{ where }\bar{s}\in N \mbox{ and } \phi (\bar{x}) \in \mathcal{L} . 
$$ 
By linearity of inverse continuity moduli, it is easy to see 
that for every continuous sentence $\phi (\bar{s})$ 
there is a number $q\in \mathbb{Q}$ 
(depending on the continuity modulus of $\phi$) such that 
the grey subset as above is $H_{q, \bar{s}}$-invariant. 
As a result we have the following statement. 

\begin{quote} 
Let $\mathcal{B}_{\mathcal{L}}$ be a family of grey subsets corresponding 
to a countable continuous fragment $\mathcal{L}$ of $L_{\omega_1 \omega}$. 
Then the family $\mathcal{B}_{\mathcal{L}}$ is a 
good basis with respect to $\mathcal{R}^{\mathcal{M}} (G_0 )$. 
\end{quote} 
\bigskip 

{\bf (A) The Urysohn space. } 
Let us consider the following example. 
The Urysohn space of diameter 1  
is the unique Polish metric space of diameter 1 
which is universal and ultrahomogeneous. 
This space $\mathfrak{U}$ is considered in 
the continuous signature  $\langle d \rangle$. 

The countable counterpart of $\mathfrak{U}$ is the 
{\em rational Urysohn space of diameter 1}, $\mathbb{Q}\mathfrak{U}$, 
which is both ultrahomogeneous and universal for countable 
metric spaces with rational distances and diameter $\le 1$. 
The space $\mathfrak{U}$ is interpreted as $\mathcal{M}$ above and 
$\mathbb{Q}\mathfrak{U}$ will be our $N$. 
It is shown in Section 6.1 of \cite{BYBM} that there is 
an embedding of $\mathbb{Q}\mathfrak{U}$ into $\mathfrak{U}$ so that: \\
(i) $\mathbb{Q}\mathfrak{U}$ is an approximating substructure of $\mathfrak{U}$: 
it is dense in $\mathfrak{U}$; 
every isometry of  $\mathbb{Q}\mathfrak{U}$ extends to an isometry of  
$\mathfrak{U}$, and $\mathsf{Iso} (\mathbb{Q}\mathfrak{U})$ is dense in $\mathsf{Iso}(\mathfrak{U})$;  \\ 
(ii) for every $\varepsilon>0$, every partial isometry $h$ of  
$\mathbb{Q}\mathfrak{U}$ with domain $\{ a_1 ,...,a_n\}$ and every isometry 
$g$ of $\mathfrak{U}$ such that $d(g(a_i ),h(a_i ))<\varepsilon$ 
for all $i$, there is an isometry $\hat{h}$ of  $\mathbb{Q}\mathfrak{U}$ 
that extends $h$ and is such that for all  $x\in \mathfrak{U}$, 
$d(\hat{h}(x),g(x))<\varepsilon$.  

Let $G_0$ be a dense countable subgroup of $\mathsf{Iso}(\mathbb{Q}\mathfrak{U})$.  
By (i) we may view it as a subgroup of $\mathsf{Iso} (\mathfrak{U})$. 
We now define $\mathcal{R}^{\mathfrak{U}} (G_0 )$ by the recipe above.  
As we already know $G_0$ and 
$\mathcal{R}^{\mathfrak{U}} (G_0 )$ satisfy all the conditions of/before  
Remark \ref{Gbasis}.

Let $L$ be a relational language of a continuous signature as above. 
Let $\mathcal{L}$ be a countable fragment of $L_{\omega_1 \omega}$ and  
let $\mathcal{B}_{\mathcal{L}}$ be the family of all grey subsets defined 
by continuous $\mathcal{L}$-sentences (with parameters from $\mathbb{Q}\mathfrak{U}$) as above. 
We already know that $\mathcal{B}_{\mathcal{L}}$ is a good basis. 
It is even proved in \cite{IMI-APAL} (see Theorem 3.2) that this basis is nice.

\bigskip

{\bf (B) A separable Hilbert space.} 
We follow \cite{BYBM}, \cite{IMI-Arx} and \cite{ros09}. 
Let us consider the complex Hilbert space $l_2(\mathbb{N})$. 
Let ${\cal Q}$ denote the algebraic closure of
$\mathbb{Q}$, and consider the countable subset
${\cal Q}l_2$ of $l_2 (\mathbb{N})$ of all sequences with finite 
support and coordinates from ${\cal Q}$. 
It is shown in Section 6.2 of \cite{BYBM} (with using Section 7 of \cite{ros09}), 
that it is an approximating substructure of $l_2 (\mathbb{N})$. 
In particular, we have another pair playing the role of $(\mathcal{M},N)$. 
Since $l_2 (\mathbb{N})$ is unbounded, the authors of \cite{BYBM} consider
instead its closed unit ball, equipped with functions
$x \rightarrow \alpha x$ for $|\alpha | \le 1$ 
and
$(x, y)\rightarrow \frac{x+y}{2}$, from which
$l_2 (\mathbb{N})$  can be recovered. 

Let $G_0$ be a dense countable subgroup of ${\bf U}(\mathcal{Q}l_2 )$.  
We may view it as a subgroup of ${\bf U}(l_2 (\mathbb{N}))$. 
We now apply the procedure of 
$\mathcal{R}^{\mathcal{M}} (G_0 )$ and $\mathcal{B}_{\mathcal{L}}$. 
As a result, we obtain the 
{\em family $\mathcal{R}^{\mathbb{H}}(G_0 )$},  
a grey basis defined on ${\bf U}(l_2 (\mathbb{N}))$ and a good basis on 
of the logic space $l_2 (\mathbb{N})_{L}$ 
with respect to $\mathcal{R}^{\mathbb{H}}(G_0 )$. 
A detaled description is given in Section 2 of \cite{IMI-Arx}. 
\bigskip 

{\bf (C) The measure algebra on $[0,1]$.}
Denote by $\lambda$ the Lebesgue measure on the unit interval
$[0,1]$. 
We view its automorphism group $\mathsf{Aut}([0,1], \lambda )$ as the automorphism group of the Polish metric structure
$$
( MALG,0,1,\wedge , \vee , \neg , d), 
$$ 
where MALG denotes the measure algebra on $[0,1]$ 
and
$d(A, B) = \lambda (A \Delta B)$
(see \cite{kechris}). 
Further details of presentation of it in the form as above, can be found in Section 2 of \cite{IMI-Arx}. 

\begin{remark} 
{\em The basic continuous metric structures which appear in {\bf (A)} - {\bf (C)}, 
i.e. $\mathfrak{U}$, the unit ball of $l_2 (\mathbb{N})$ and MALG, 
are ultahomogeneous structures in the classical sense: any partial isomorphism 
between two tuples extends to an automorphism of the structure. 
This is, in particular, mentioned in Section 3.1 of \cite{BYFra}. }
\end{remark}

\section{Stability in generalized model theory} 

In this section we define stability under the circumstances of Section 2.3. 
In Section 4 we will see that it is a particular case of  a general definition under the circumstances of the generalized model theory. 

Consider the situation  given in Section 2.3. 
Let $(\mathcal{M},d)$ be a continuous metric structure with universe $\mathcal{M}$ and let 
$N$ be a countable approximating substructure  of $\mathcal{M}$. 
Let $G_0$ be a dense countable subgroup of $\mathsf{Aut}(N)$ viewed as a subgroup of $\mathsf{Aut} (\mathcal{M})$ and  
let $\mathcal{R}^{\mathcal{M}}(G_0 )$ be the corresponding countable family from $\mathsf{Aut}(\mathcal{M})$ of all $G_0$-cosets of grey subgroups from $\mathsf{max}(\mathcal{R}_0 )$ as in Section 2.3.  
We remind the reader that $\mathcal{R}_0$ consists of all clopen grey subgroups of  the (truncated) form 
\[
H_{q, \bar{s}} : g\rightarrow q \cdot d(g(\bar{s}), \bar{s}), 
\mbox{ where } \bar{s}\subset N, \mbox{ and } q\in \mathbb{Q}^{+}.   
\] 

As in Section 2.3 let $L$ be a relational language of a continuous signature with inverse continuity moduli $\le n\cdot \mathsf{id}$ for $n$-ary relations. 
We again assume that $L$ extends the language of the structure $\mathcal{M}$. 
Consider the logic space $\mathcal{M}_L$. 
Let $\mathcal{B}_{\mathcal{L}}$ be a family of grey subsets corresponding to a countable continuous fragment $\mathcal{L}$ of $L_{\omega_1 \omega}$. 
As we already know the family $\mathcal{B}_{\mathcal{L}}$ is a 
good basis with respect to $\mathcal{R}^{\mathcal{M}} (G_0 )$. 
Let {\bf t} be the corresponding good topology and let $Y$ be an invariant {\bf t}-closed subset of $\mathcal{M}_{L}$. 
We view $Y$ as an $\mathsf{Aut}(\mathcal{M})$-invariant set of $L$-expansions of $\mathcal{M}$.   

Let a formula $\phi (\bar{x}, \bar{x}')$ belong to $\mathcal{L}$ and let  $\bar{s}, \bar{s}' \in N$ be tuples of appropriate length. 
Then $\phi(\bar{s}, \bar{s}')$ is viewed as an element of $\mathcal{B}_{\mathcal{L}}$. 
How to define that $\phi(\bar{s}, \bar{s}')$ is unstable with respect to $Y$?   
We now formulate a variant of the order property for $\phi (\bar{x},\bar{x}')$. 

\begin{definition} \label{preunst} 
The grey set $\phi(\bar{s},\bar{s}')$ is {\sl unstable} with respect to $Y$ if there are rational $r_1$ and $r_2\in [0,1]$ such that $r_1 < r_2$ and for any $n$ and any $\varepsilon\in \mathbb{Q} \cap [0,1]$ 
there exist $\bar{s}_1 ,\bar{s}'_1 ,\ldots ,\bar{s}_n ,\bar{s}'_n \in N$ such that 
$d(tp^{\mathcal{M}}(\bar{s}_i \bar{s}'_j), tp^{\mathcal{M}}(\bar{s} \bar{s}'))\le \varepsilon$ for all $i,j\le n$  and  
\[ 
Y \cap \bigcap \{ (\phi (\bar{s}_i ,\bar{s}'_j) \dot{-} r_1 )_{\le \varepsilon} : i < j \}   
\cap  \bigcap \{ (r_2 \dot{-} \phi (\bar{s}_i ,\bar{s}'_j))_{\le \varepsilon} : j \le i \} \not=\emptyset. 
\] 
\end{definition} 
Note, that applying this definition in the case of first-order continuous logic, the requirement that $\bar{s}_1 ,\bar{s}'_1 ,\ldots ,\bar{s}_n ,\bar{s}'_n \in N$ can be omitted (i.e. being just in $\mathcal{M}$). 
This follows by continuity of formulas and density of $N$ in $\mathcal{M}$.  
In the case when $\mathcal{M}$ is just a pure discrete set and $Y$ is determined by a theory of some non-trivial language, 
we arrive at the standard definition of unstability of $Y$.  
On the other hand, the definition excludes the case when $Th(\mathcal{M})$ is already unstable and it is witnessed by $\phi(\bar{x}, \bar{x}')$, where $\phi$ is a formula of the language of $\mathcal{M}$. 

\begin{remark} 
{\em It is clear that Definition \ref{preunst} corresponds to the situation of Section 2.3. 
On the other hand, in Section 4 we will give Definition \ref{unst}, a definition of an unstable grey subset under the circumstances of the generalized model theory,  where the presence of the ``platform" is not assumed.   
Definition \ref{unst} is a Borel property. 
We will see that separable cateoricity of $\mathcal{M}$ implies that these definitions are equivalent. 
It is worth noting that originally the author started this research with the material of Section 4, and found Definition \ref{preunst} later as an illustration of the general theory. }
\end{remark} 

\subsection{Relative stability over a stable platform}

We now connect Definition \ref{preunst} with relative stability. 
The idea of the latter is as follows. 
Let $T$ be a first-order  continuous theory of some language $L$, $\phi(\bar{x}, \bar{x}')$ be an $L$-formula, and 
$\Theta$ be a set of $L$-formulas with no free variables outside $\bar{x}\bar{x}'$.

\begin{definition} \label{preunst0}   
We call the formula  $\phi(\bar{x},\bar{x}')$  {\sl relatively unstable with respect to} $T$ {\sl and to} $\Theta$,  
if there exist rational $r_1 < r_2$, a sequence $\bar{s}_1 \bar{s}'_1 ,\ldots ,\bar{s}_n \bar{s}'_n ,\ldots$ and 
$M\models T$ such that for any  $\theta(\bar{x},\bar{x}') \in \Theta$ 
\[ 
\lim_i \lim_j \theta^M (\bar{s}_i ,\bar{s}'_j)= \lim_j \lim_i \theta^M (\bar{s}_i ,\bar{s}'_j),  
\] 
under the assumption that one of these limits exists, but    
\[
\mathsf{min} (\lim_i \lim_j \phi^M (\bar{s}_i ,\bar{s}'_j),  \lim_j \lim_i \phi^M (\bar{s}_i ,\bar{s}'_j))< r_1  \mbox{ and }  
\]   
\[ 
\mathsf{max} (\lim_i \lim_j \phi^M  (\bar{s}_i ,\bar{s}'_j)   ,  \lim_j \lim_i \phi^M  (\bar{s}_i ,\bar{s}'_j)) > r_2 . 
\] 
\end{definition} 

We now apply this definition in the situation of generalized continuous model theory given in the beginning of Section 3. 

\begin{definition} \label{preunst2} 
Under the circumstances of Definition \ref{preunst}, let $\Theta \subset \mathcal{L}$ 
be a set of formulas with no free variables outside $\bar{x}\bar{x}'$.   
We call the formula  $\phi(\bar{x},\bar{x}')$  {\sl relatively unstable with respect to} $Y$ {\sl and to} $\Theta$,  
if there exists  rational $r_1 < r_2$, a sequence $\bar{s}_1 \bar{s}'_1 ,\ldots ,\bar{s}_n \bar{s}'_n ,\ldots$ and 
$M\in Y$ such that the conclusion of Definition \ref{preunst0} holds. 
\end{definition} 

\begin{remark} 
{\em It is easy to see that Definition \ref{preunst0} (but not \ref{preunst2}) can be simplified by replacing the inequalities  involving $r_1$ and $r_2$ by the inequality 
\[ 
\lim_i \lim_j \phi^M  (\bar{s}_i ,\bar{s}'_j)   \not=  \lim_j \lim_i \phi^M  (\bar{s}_i ,\bar{s}'_j). 
\] 
Indeed, in this situation we can apply compactness and the argument of Lemma 7.2 from \cite{BYU}. }

\end{remark} 

The property of relative (un)stability appears in the following proposition. 

\begin{prop} \label{prepre}  
Under the circumstances above (and of Section 2.3) assume that $\mathcal{M}$ is a stable  first-order continuous structure.    
Assume that $Y$ corresponds to an $\mathcal{L}$-theory of $L$-expansions of $\mathcal{M}$ and  assume that a continuous 
$L$-formula  $\hat{\phi} (\bar{x},\bar{x}' )$ is first-order and has the order property with respect to $Y$ under the standard definition.  

Then $\hat{\phi} (\bar{x}, \bar{x}')$ is relatively unstable with respect to $Y$ and the set $\Theta$ of all first order formulas 
$\theta (\bar{x}, \bar{x}')$  of the language of $\mathcal{M}$. 
Furthermore, the corresponding sequence $\bar{s}_1 \bar{s}'_1 ,\ldots ,\bar{s}_n \bar{s}'_n ,\ldots$ can be taken in $N$. 
\end{prop}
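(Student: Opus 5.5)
The plan is to start from the order property and build the required sequence inside a model of $Y$ by compactness, Ramsey extraction and stability of $\mathcal{M}$.

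\emph{Step 1: an indiscernible witness.} Since $\hat{\phi}$ has the order property with respect to $Y$, there are $r_1<r_2$ such that for each $n$ some $M\in Y$ contains tuples with $\hat{\phi}(\bar{a}_i,\bar{a}'_j)\le r_1$ for $i<j$ and $\ge r_2$ for $j\le i$. Because $Y$ is given by an $\mathcal{L}$-theory of expansions of $\mathcal{M}$, I can combine compactness with Ramsey's theorem (the continuous version, as in Section 7 of \cite{BYU}). This gives, in some model $M^*$ of that theory, an infinite sequence $(\bar{a}_i\bar{a}'_i)_{i\in\omega}$ which keeps the order property and is indiscernible for all first-order formulas of the language of $\mathcal{M}$. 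After passing to subsequences and a diagonal argument, I will also make the relevant iterated limits of $\hat{\phi}$ and of each $\theta\in\Theta$ (countably many) exist.

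\emph{Step 2: getting $M^*$ onto the platform.} The model must be an element of $Y$, that is, an expansion living on the universe $\mathcal{M}$. I will take $M^*$ separable: a downward L\"owenheim--Skolem argument keeps the countably many witnesses and the countably many formulas. Its reduct to the language of $\mathcal{M}$ then satisfies $Th(\mathcal{M})$. The intended reading is that $Y$ is the set of all expansions of $\mathcal{M}$ in $\mathcal{M}_L$ satisfying the theory. The step I expect to be the main obstacle is identifying the reduct of $M^*$ with $\mathcal{M}$. This needs some form of separable categoricity or universality/homogeneity of $\mathcal{M}$ (as holds for $\mathfrak{U}$, the Hilbert space and MALG). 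If that is not available, I would instead realize the finite fragments of the configuration directly in $\mathcal{M}$, using that the theory only constrains expansions of $\mathcal{M}$ itself. Each finite approximation then lives in some $M\in Y$, and the two-sided limit conditions are obtained by a diagonal construction over approximations, with the formulas' linear moduli (Lemma \ref{ContMod}) controlling errors.

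\emph{Step 3: the limit equalities for $\Theta$.} For $\theta\in\Theta$, i.e.\ a formula of the language of $\mathcal{M}$, the sequence is $\theta$-indiscernible. Stability of $\mathcal{M}$ means $\theta$ has no order property. By the standard characterization (Proposition 7.3/Lemma 7.2 of \cite{BYU}, or Grothendieck-type double limit criterion), whenever the iterated limits $\lim_i\lim_j\theta(\bar{a}_i,\bar{a}'_j)$ and $\lim_j\lim_i\theta(\bar{a}_i,\bar{a}'_j)$ exist, they are equal. For $\hat{\phi}$ the order property gives $\lim_i\lim_j\hat{\phi}\le r_1$ and $\lim_j\lim_i\hat{\phi}\ge r_2$. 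Shrinking $r_1$ and enlarging $r_2$ slightly yields the strict inequalities of Definition \ref{preunst0}.

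\emph{Step 4: moving into $N$.} Since $N$ is dense in $\mathcal{M}$, I replace each $\bar{a}_i,\bar{a}'_i$ by $\bar{s}_i,\bar{s}'_i\in N$ with distance $<2^{-i}$. Formulas are uniformly continuous with linear moduli (Lemma \ref{ContMod}), so every value $\theta^M(\bar{s}_i,\bar{s}'_j)$ differs from $\theta^M(\bar{a}_i,\bar{a}'_j)$ by at most $k_\theta 2^{-\min(i,j)}$, which tends to $0$. Hence all iterated limits are unchanged, which gives the final clause of the proposition.
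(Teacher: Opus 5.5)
Your Step 2 leaves a genuine gap, and Steps 1--2 create it. Definition \ref{preunst2} asks for a single $M\in Y$, i.e.\ an $L$-expansion of $\mathcal{M}$ on the universe of $\mathcal{M}$, carrying one infinite sequence. Compactness plus an indiscernible sequence takes you out of $Y$: $M^*$ is only some model of the theory. The proposition assumes only that $\mathcal{M}$ is stable, not separably categorical, so nothing lets you identify the reduct of a separable $M^*$ with $\mathcal{M}$ and transport the expansion back. Moreover, $Y$ is given by an $\mathcal{L}$-theory with $\mathcal{L}$ a countable fragment of $L_{\omega_1\omega}$, so compactness is not available in general. Your fallback does not close the gap either. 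The iterated limits in Definition \ref{preunst0} are properties of one infinite sequence in one structure. Finite configurations realized in possibly different elements of $Y$ cannot be glued into such a sequence inside a single $M\in Y$. The linear moduli of Lemma \ref{ContMod} control perturbations inside one structure, not the passage between structures.

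The missing idea is that you need no indiscernibility and no new model. As in the paper, read the hypothesis as giving an infinite order-property sequence in one $M\in Y$. First move it into $N$ by density, perturbing so little that $\hat{\phi}$ changes by less than $(r_2-r_1)/3$; the order inequalities survive with slightly relaxed bounds. Then, staying inside the same $M$, use Ramsey's theorem and a diagonal argument over the countably many $\theta\in\Theta$ (and $\hat{\phi}$) to pass to a subsequence on which both iterated limits exist. Passing to a subsequence preserves the $\hat{\phi}$ separation. Stability of $\mathcal{M}$, which is literally the reduct of $M$, then forces the two iterated limits of each $\theta$ to coincide, so your Step 3 applies as written.

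Two smaller points. First, doing the density step after extracting the limits, as in your Step 4, does not preserve existence of the inner limits: for fixed $i$ the error $k_\theta 2^{-i}$ does not vanish as $j\to\infty$, so you would have to extract again. Second, to turn $\le r_1$ and $\ge r_2$ into strict inequalities you must enlarge $r_1$ and shrink $r_2$, not the reverse.
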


{\em Proof.} 
Since $\hat{\phi} (\bar{x}, \bar{x}')$ has the order property, there exist rational numbers $r_1 < r_2$ and a sequence 
$\bar{s}_1 \bar{s}'_1 ,\ldots ,\bar{s}_n \bar{s}'_n ,\ldots$ such that in some  $M\in Y$
\[
\mathsf{min} (\lim_i \lim_j \hat{\phi}  (\bar{s}_i ,\bar{s}'_j)   ,  \lim_j \lim_i \hat{\phi}  (\bar{s}_i ,\bar{s}'_j)) < r_1 \, \mbox { and } \,  
\] 
\[ 
\mathsf{max} (\lim_i \lim_j \hat{\phi}  (\bar{s}_i ,\bar{s}'_j)   ,  \lim_j \lim_i \hat{\phi}  (\bar{s}_i ,\bar{s}'_j)) > r_2 . 
\]    
Since $N$ is dense in $\mathcal{M}$ and $\hat{\phi}  (\bar{x}, \bar{x}')$ is continuous, this sequence can be found in $N$. 

For any formula $\theta(\bar{x},\bar{x}')$ of the language of $\mathcal{M}$ consider the values $\theta^M (\bar{s}_n ,\bar{s}'_m )$ with $n<m$. 
Applying the Ramsey theorem we obtain a subsequence of $\bar{s}_n\bar{s}'_n$ such that these values are in the same 1/2-half of $[0,1]$. 
Iterating this argument we obtain a subsequence of $\bar{s}_n\bar{s}'_n$ such that 
$\lim_n \mathsf{inf} \{ \theta^M (\bar{s}_{n} ,\bar{s}'_{m}): n<m \}$ exists and coincides with 
$\lim_n \mathsf{sup} \{\theta^M (\bar{s}_{n} ,\bar{s}'_{m}): n<m\}$ for this subsequence. 
We denote the elements of it by $\bar{s}_n\bar{s}'_n$ again.       
We see that $\lim_n \lim_m \theta^M (\bar{s}_{n} ,\bar{s}'_{m})$ exists, say $\delta$. 
By stability of $\mathcal{M}$,  
$\lim_m \lim_n \theta^M (\bar{s}_{n} ,\bar{s}'_{m}) =\delta$. 

Since the language is countable this argument can be organized in a uniform way for all formulas of the language of $\mathcal{M}$. 
Note, that the inequality 
\[
\mathsf{min} (\lim_i \lim_j \hat{\phi} ^M (\bar{s}_i ,\bar{s}'_j),  \lim_j \lim_i \hat{\phi}^M  (\bar{s}_i ,\bar{s}'_j))<r_1
\]  
and the corresponding one with $\mathsf{max}$ and $r_2$ do not change for the final subsequence.  
$\Box$ 

\bigskip 

Concerning examples of Section 2.3 it is well-known that the complex Hilbert space $l_2(\mathbb{N})$ and the measure algebra MALG are stable structures, \cite{BYBHU}. 
In particular these platforms are appropriate for the proposition. 
It is also well-known that the structure $\mathfrak{U}$ is not stable \cite{BYTs}, \cite{Conant}. 
We also mention here that it follows from \cite{Conant} that the Polish ultrametric Urysohn space for $\mathbb{Q} \cap [0,1]$ is stable.

\begin{remark} \label{rem-sta}
{\em The paper \cite{BHV} gives an example of an expansion of the complex Hilbert space $l_2(\mathbb{N})$ by a generic predicate such that the corresponding theory (called $T_{N}$) is not stable (even not $NTP_2$, i.e. not simple). 
In particular, the proposition above works in this case. }
\end{remark} 

\bigskip 

\noindent  
\begin{example}\label{Example} {\bf Stable equivalence.} 
Let $M$ be a first-order continuous metric structure and 
$\phi(\bar{x},\bar{y})$ and $\theta(\bar{x},\bar{y})$ be formulas with parameters from $M$. 
\end{example} 
\begin{definition} \label{stable-eq}
We say that $\phi (\bar{x},\bar{y})$ is {\sl stably equivalent  to $\theta(\bar{x},\bar{y})$ with respect to} $Th(M)$ if 
the formula $(\phi - \theta )(\bar{x},\bar{y})$ (viewed in $[-1,1]$) is stable.  
\end{definition} 

We now connect this notion with relative stability, see Definition \ref{preunst0}.

\begin{lem} \label{stableq} 
(a) Stable equivalence is an equivalence relation on the set of formulas. \\ 
(b) If a formula $\phi (\bar{x},\bar{y})$ is stably equivalent   to $\theta(\bar{x},\bar{y})$ with respect to $Th(M)$ then if $\phi(\bar{x},\bar{y})$ is stable with respect to $Th(M)$ and $\theta(\bar{x},\bar{y})$, and vice versa. 
\end{lem}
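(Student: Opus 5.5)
Both parts rest on one elementary fact: the stable formulas with respect to $Th(M)$, taken as $[-1,1]$-valued (or, after rescaling, $[0,1]$-valued) functions, are closed under sums, differences and negation. Equivalently, a formula $\psi(\bar{x},\bar{y})$ is stable iff for every model $N\models Th(M)$ and all sequences $(\bar{a}_i),(\bar{b}_j)$ in $N$ for which both double limits exist,
\[
\lim_i\lim_j \psi(\bar{a}_i,\bar{b}_j)=\lim_j\lim_i \psi(\bar{a}_i,\bar{b}_j).
\]
This is the double-limit characterisation of stability, cf.\ Lemma 7.2 of \cite{BYU} and the remark after Definition \ref{preunst2}. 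If $\psi_1$ and $\psi_2$ are stable and sequences are given, we first pass to subsequences on which all the relevant iterated limits of $\psi_1$ and $\psi_2$ exist, using compactness of $[0,1]$ and a diagonal argument. Iterated limits are additive, so $\lim_i\lim_j(\psi_1\pm\psi_2)=\lim_j\lim_i(\psi_1\pm\psi_2)$. One subtlety: the limits of $\psi_1\pm\psi_2$ may exist along the original sequences while those of the summands do not. We handle this with the standard Ramsey/subsequence extraction from the proof of Proposition \ref{prepre}: the iterated limits of the sum are preserved under passing to subsequences, while the limits of each summand can be forced to exist on a common subsequence.

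For (a), reflexivity holds because $\phi-\phi=0$ is constant, hence stable. Symmetry holds because $\theta-\phi=-(\phi-\theta)$, and negation preserves equality of double limits. Transitivity follows from the closure fact: if $\phi-\theta$ and $\theta-\chi$ are stable, then so is their sum $\phi-\chi$.

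For (b), recall that by Definition \ref{preunst0}, ``stable with respect to $Th(M)$ and $\theta$'' means not relatively unstable with respect to $Th(M)$ and $\Theta=\{\theta\}$. The claim is that if $\phi-\theta$ is stable, then $\phi$ is relatively stable with respect to $\theta$, and symmetrically $\theta$ is relatively stable with respect to $\phi$. Suppose $N\models Th(M)$ and sequences $\bar{s}_i,\bar{s}'_j$ satisfy the hypothesis of Definition \ref{preunst0} for $\theta$: the two double limits of $\theta$ agree, assuming one exists. Using the subsequence extraction above, we assume all iterated limits of $\phi$, $\theta$ and $\phi-\theta$ exist. Passing to a subsequence keeps the witnessing inequalities with $r_1<r_2$, exactly as noted at the end of the proof of Proposition \ref{prepre}. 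Stability of $\phi-\theta$ gives equal double limits for $\phi-\theta$. Adding the equal double limits of $\theta$ then gives
\[
\lim_i\lim_j\phi=\lim_j\lim_i\phi,
\]
which contradicts the strict separation by $r_1<r_2$ required for relative instability. The converse direction is identical with the roles of $\phi$ and $\theta$ exchanged, since stable equivalence is symmetric by (a).

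The main obstacle is the bookkeeping of the ``under the assumption that one of these limits exists'' clause in Definition \ref{preunst0}. We must make sure that passing to subsequences does not destroy the hypothesis on $\theta$, nor the instability witnesses for $\phi$. We handle this by first extracting a subsequence on which every relevant iterated limit exists: a countable family, via the Ramsey/diagonal argument of Proposition \ref{prepre}. Once all limits exist, both the hypothesis on $\theta$ and the inequalities for $\phi$ persist, and the additivity argument is immediate.
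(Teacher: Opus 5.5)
Your proof is correct and is essentially the paper's: (a) follows from closure of stable formulas under sums and negation, and (b) comes from splitting $\phi=(\phi-\theta)+\theta$ and adding equal iterated limits. The subsequence extraction in (b) is actually superfluous, for two reasons: the $\min/\max$ clause of Definition \ref{preunst0} already forces both iterated limits of $\phi$ to exist, and the hypothesis on $\theta$ must be read, as the paper reads it, as asserting that both $\theta$-limits exist and agree (read as a vacuous implication, (b) would be false). Hence both limits of $\phi-\theta$ exist outright, which also sidesteps the paper's unjustified claim that stability of $\phi-\theta$ yields existence of $\lim_j\lim_i(\phi-\theta)$.
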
 

{\em Proof.} 
(a) The proof is straightforward.  
Note that the set of stable formulas is closed under continuous Boolean combinations, see Lemma 8.1 of \cite{BYU}.  

(b) 
Take a sequence  
$\bar{s}_1 \bar{s}'_1 \ldots \bar{s}_n \bar{s}'_n \ldots \in M$ such that 
\[
\lim_i \lim_j \theta(\bar{s}_i ,\bar{s}'_j)= \lim_j \lim_i \theta(\bar{s}_i ,\bar{s}'_j) 
\] 
and there is one of    
$\lim_i \lim_j \phi (\bar{s}_i ,\bar{s}'_j)$  or $\lim_j \lim_i \phi (\bar{s}_i ,\bar{s}'_j)$, say the first one.   
Then 
$\lim_i \lim_j \phi (\bar{s}_i ,\bar{s}'_j) - \lim_i \lim_j \theta (\bar{s}_i ,\bar{s}'_j)$ and so $\lim_i \lim_j (\phi (\bar{s}_i ,\bar{s}'_j) - \theta (\bar{s}_i ,\bar{s}'_j))$ exist. 
By stable equivalence the limit 
$\lim_j \lim_i (\phi(\bar{s}_i ,\bar{s}'_j) -  \theta (\bar{s}_i ,\bar{s}'_j))$ exists and equals to the previous one. 
Now by existence of 
$\lim_j \lim_i \theta (\bar{s}_i ,\bar{s}'_j)$ we have exisence of 
$\lim_j \lim_i \phi(\bar{s}_i ,\bar{s}'_j)$ and the required equality.   
$\Box$

\bigskip 

Let us return to the issue stated in the beginning of Section 3.1.  
The following proposition reformulates Definition \ref{preunst} as a natural version of relative stability.

\begin{prop} \label{preunst-1-2} 
Under the assumptions of Section 3, assume that $\mathcal{M}$ is a separably categorical first-order structure, 
$Y$ is defined by a first-order $L$-theory and $\hat{\phi} (\bar{x},\bar{x}')$ is a continuous first-order formula. 
Then Definition \ref{preunst} is equivalent to the following statement. 

There exists a sequence $\bar{s}_1 \bar{s}'_1 ,\ldots ,\bar{s}_n \bar{s}'_n ,\ldots$ in $N$ such that  
for any formula $\theta (\bar{x}, \bar{x}')$ 

of the language of $\mathcal{M}$ 
\[
\lim_i \lim_j \theta^{\mathcal{M}}(\bar{s}_i ,\bar{s}'_j)= \theta^{\mathcal{M}}(\bar{s} ,\bar{s}') = \lim_j \lim_i \theta^{\mathcal{M}}(\bar{s}_i ,\bar{s}'_j)   
\] 

but there is some $M\in Y$ where 
\[ 
\lim_i \lim_j \hat{\phi}^M (\bar{s}_i ,\bar{s}'_j) \not=  \lim_j \lim_i \hat{\phi}^M (\bar{s}_i ,\bar{s}'_j) \mbox{ and one of these limits exists}. 
\]  
\end{prop}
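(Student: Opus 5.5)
We prove the two implications separately. The tool for passing between finite and infinite data is compactness of continuous logic. The tool for passing between closeness of types over $\mathcal{M}$ and values of individual formulas is separable categoricity of $Th(\mathcal{M})$. By the Ryll-Nardzewski-type theorem for continuous logic (\cite{BYBHU}), the $d$-topology and the logic topology coincide on $S_n(Th(\mathcal{M}))$. Hence $d(tp^{\mathcal{M}}(\bar{a}),tp^{\mathcal{M}}(\bar{s}\bar{s}'))\to 0$ holds if and only if $\theta^{\mathcal{M}}(\bar{a})\to\theta^{\mathcal{M}}(\bar{s}\bar{s}')$ for every $\theta$ of the language of $\mathcal{M}$. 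Since $\mathcal{M}$ is approximately ultrahomogeneous (as is the case in examples (A)--(C)), type distance is realized up to arbitrarily small error by automorphisms. By density of $N$ and continuity of formulas (as in the remark after Definition \ref{preunst}), all tuples may be moved into $N$.

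\emph{From the sequence statement to Definition \ref{preunst}.} Let $\bar{s}_i\bar{s}'_i$ in $N$ and $M\in Y$ be as in the statement. Pass to a subsequence via the Ramsey argument used in the proof of Proposition \ref{prepre}, applied to $\hat{\phi}^M$. This makes $\lim_{i<j}\hat{\phi}^M(\bar{s}_i,\bar{s}'_j)=\alpha$ and $\lim_{j\le i}\hat{\phi}^M(\bar{s}_i,\bar{s}'_j)=\beta$ exist; the hypothesis that one of the iterated limits exists is what guarantees that they survive, with $\alpha\ne\beta$. Replacing $\hat{\phi}$ by $1-\hat{\phi}$ if necessary, we may assume $\alpha<\beta$, and we choose rationals $\alpha<r_1<r_2<\beta$. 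Given $n$ and $\varepsilon$, take indices $i_1<\dots<i_n$ far out. Then $M$ satisfies $\hat{\phi}\dot- r_1\le\varepsilon$ on pairs $k<l$ and $r_2\dot-\hat{\phi}\le\varepsilon$ on pairs $l\le k$. The type condition $d(tp(\bar{s}_{i_k}\bar{s}'_{i_l}),tp(\bar{s}\bar{s}'))\le\varepsilon$ follows from the double-limit assumption on all $\theta$ combined with the coincidence of topologies. One point needs care: the diagonal pairs $k=l$. Here we either strengthen by passing to a subsequence where the diagonal types also converge, or observe that the statement's double limits control $\bar{s}_i\bar{s}'_j$ only for $i\ne j$ and shift indices, taking $\bar{s}'_{i_l+1}$ in place of $\bar{s}'_{i_l}$.

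\emph{From Definition \ref{preunst} to the sequence statement.} For each $n$ take $\varepsilon=1/n$, witnesses $\bar{s}^{(n)}_k\bar{s}'^{(n)}_k$ for $k\le n$, and a model $M_n\in Y$. Form the first-order theory expressing that $Y$ holds (using that $Y$ is given by a first-order $L$-theory), that $tp(\bar{c}_i\bar{c}'_j)=tp^{\mathcal{M}}(\bar{s}\bar{s}')$ for all $i,j$, and that $\hat{\phi}(\bar{c}_i,\bar{c}'_j)\le r_1$ for $i<j$ and $\ge r_2$ for $j\le i$. By compactness this theory is finitely satisfiable. A separable model of it has an $\mathcal{M}$-reduct which is separable and elementarily equivalent to $\mathcal{M}$, hence isomorphic to $\mathcal{M}$ by separable categoricity. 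Transporting the structure along this isomorphism gives some $M\in Y$, since $Y$ is invariant. In $M$, all $\theta$-values on the pairs are constantly $\theta^{\mathcal{M}}(\bar{s},\bar{s}')$, so the $\theta$ double limits hold. Finally, use density of $N$ and approximate ultrahomogeneity to perturb the constants into $N$ with errors tending to $0$; this preserves the $\hat{\phi}$ gap up to $o(1)$. The $\theta$-equalities are preserved in the limit, while $\lim_i\lim_j\hat{\phi}^M\le r_1<r_2\le\lim_j\lim_i\hat{\phi}^M$ after a Ramsey subsequence makes the limits exist.

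The main obstacle is this last perturbation into $N$. Infinitely many perturbations must be controlled simultaneously, with errors $\eta_i\to0$ chosen so that the uniform linear modulus of $\hat{\phi}$ from Lemma \ref{ContMod} keeps the gap. The $\theta$-limits then converge by the same linear moduli. The diagonal indexing issue in the first direction is the other delicate point.
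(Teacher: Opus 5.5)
Your overall route is the paper's: compactness plus separable categoricity give a single $M\in Y$ carrying a sequence of exact type $tp^{\mathcal{M}}(\bar{s}\bar{s}')$, and the converse uses subsequence extraction plus the coincidence of the logic and $d$-topologies. However, the central step of the direction from the sequence statement to Definition \ref{preunst} fails as written.

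You homogenize only $\hat{\phi}^M$, take $i_1<\dots<i_n$ ``far out'', and claim that $d(tp^{\mathcal{M}}(\bar{s}_{i_k}\bar{s}'_{i_l}),tp^{\mathcal{M}}(\bar{s}\bar{s}'))\le\varepsilon$ follows from the iterated-limit hypothesis on the $\theta$'s. Iterated limits do not control the values at pairs of large indices. Nothing in the hypothesis prevents $\theta^{\mathcal{M}}(\bar{s}_i,\bar{s}'_j)$ from being far from $\theta^{\mathcal{M}}(\bar{s},\bar{s}')$ exactly when $j=2i$, while both iterated limits still equal $\theta^{\mathcal{M}}(\bar{s},\bar{s}')$. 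So an arbitrary choice of far-out indices can violate the type condition.

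Since the $d$-ball around $tp^{\mathcal{M}}(\bar{s}\bar{s}')$ is logic-open, you need finitely many $\theta$'s to be close at every pair of the block simultaneously. You can get this in either of two ways.
\begin{itemize}
\item Run the Ramsey extraction of Proposition \ref{prepre} diagonally over a countable dense set of formulas of the language of $\mathcal{M}$ together with $\hat{\phi}^M$, on pairs $i<j$ and on pairs $j<i$. On the resulting subsequence the values genuinely converge, necessarily to the respective iterated limits.
\item Choose interleaved indices $m_1<i_1<m_2<i_2<\dots<m_n<i_n$ inductively, each new index large relative to the earlier ones, using the existence of the inner limits. Pairing $\bar{s}_{i_k}$ with $\bar{s}'_{m_l}$ then also puts every pair with $l\le k$, diagonal included, into the region $j<i$.
\end{itemize}

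There are several related slips.
\begin{itemize}
\item Your shift $\bar{s}'_{i_l+1}$ goes the wrong way. With your normalization the small value $\alpha$ sits on pairs $i<j$, so $(\bar{s}_{i_k},\bar{s}'_{i_k+1})$ would receive $\alpha$, whereas Definition \ref{preunst} demands the large value for $j\le i$. Use $\bar{s}'_{i_l-1}$ with gaps of at least $2$ between consecutive $i_l$.
\item Your other option, making the diagonal types converge, does not help. Nothing forces that limit to be $tp^{\mathcal{M}}(\bar{s}\bar{s}')$, or the diagonal $\hat{\phi}$-values to tend to $\beta$.
\item Replacing $\hat{\phi}$ by $1-\hat{\phi}$ changes the formula whose instability is being asserted. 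Reverse the order of each finite block instead.
\item The same oversight recurs in the other direction after perturbing into $N$. For fixed $i$, the values $\theta^{\mathcal{M}}(\bar{s}_i,\bar{s}'_j)$ are only within the modulus of $\eta_i$ of the constant and need not converge in $j$. So the inner limits exist only after the same diagonal extraction over the $\theta$'s and $\hat{\phi}^M$. It is then $\eta_i\to 0$ that forces the outer limits to equal $\theta^{\mathcal{M}}(\bar{s},\bar{s}')$ and preserves the $r_1<r_2$ gap.
\end{itemize}
With these repairs your argument becomes a complete, more detailed version of the paper's proof.
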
 

{\em Proof.} 
By separable categoricity of $\mathcal{M}$ the logic topology of the space of types of its theory coincides with the $d$-topology (see Section 1.2).  
Applying compactness, continuity of formulas and separable categoricity of $\mathcal{M}$ we see that Definition \ref{preunst} 
implies the condition from the final part of the formulation.  

For the contrary direction assume that the sequence $\bar{s}_1\bar{s}'_1 ,\ldots ,\bar{s}_n \bar{s}'_n ,\ldots$ witnesses the latter property. 
Taking subsequences and using compactness we may assume that there are  $r_1$ and $r_2 \in [0,1]$ with $r_1 < r_2$ and, say 
\[ 
\lim_i \lim_j \hat{\phi} (\bar{s}_i ,\bar{s}'_j) \le r_1 < r_2 \le  \lim_j \lim_i \hat{\phi} (\bar{s}_i ,\bar{s}'_j) \mbox{ in some }M\in Y  \, \, \, \, (\dagger )
\] 
(the case when the limits are on the contrary sides is also possible). 
On the other hand,  since for every $\theta(\bar{x} ,\bar{x}')$ of the language of $\mathcal{M}$ 
\[ 
\lim_i \lim_j \theta^{\mathcal{M}} (\bar{s}_i ,\bar{s}'_j) = \theta^{\mathcal{M}}(\bar{s} ,\bar{s}')  = \lim_j \lim_i \theta^{\mathcal{M}}  (\bar{s}_i ,\bar{s}'_j)  ,
\] 
for any $n$ and any $\varepsilon\in \mathbb{Q} \cap [0,1]$ 
there exist
$\bar{s}''_1 ,\bar{s}'''_1 ,\ldots ,\bar{s}''_n ,\bar{s}'''_n \in N$ satisfying $(\dagger )$ such that 
$$
d(tp^{\mathcal{M}}(\bar{s}''_i \bar{s}'''_j), tp^{\mathcal{M}}(\bar{s} \bar{s}'))\le \varepsilon \mbox{ for all }i,j\le n. 
$$  
To see this apply the fact that separable categoricity guarantees that the logic topology and the $d$-topology of the space of types are the same.  
In particular Definition \ref{preunst} holds. 
$\Box$ 

\bigskip

\begin{remark} 
{\em It is worth noting that in this formulation formulas $\theta(\bar{x}_i ,\bar{x}'_j)$ can be taken with the additional property 
$\theta(\bar{s},\bar{s}')=0$. 
This follows from the observation that if the equality with them holds for some $\theta(\bar{x}_i ,\bar{x}'_j)\pm r$, then it holds for 
$\theta(\bar{x}_i ,\bar{x}'_j)$ too. 
} 
\end{remark}

\subsection {An example of a relatively stable piece over $\mathfrak{U}$} 

Remark \ref{rem-sta} describes examples of unstable expansions of stable platforms. 
The opposite situation, when the platform is not stable, but the expansion is relatively stable, seems more interesting. 
In this section such an example is realized. 
 
Let $\mathfrak{U}$,  $\mathbb{Q}\mathfrak{U}$, 
$G_0  \le \mathsf{Iso}(\mathbb{Q}\mathfrak{U})$, 
$\mathcal{R}^{\mathfrak{U}} (G_0 )$ be as in Section 2.3.   
It is well-known that $Th(\mathfrak{U})$ is not stable, for example see \cite{BYTs}, \cite{Conant}. 
Thus $\mathfrak{U}$ does not have any expansion with a stable first-order theory.   
In this section we give an example of an expansion which defines a relatively stable complete ${\bf t}$-closed subset of $\mathcal{B}_{\mathcal{L}}$, say $Y_P$. 

We fix the continuous signature $L= (d, P)$ where $P$ is unary symbol with continuity modulus $\mathsf{id}$ and assume that $\mathcal{L}$ is the first-order fragment of continuous logic of signature $L$. 
Let $\mathcal{B}_{\mathcal{L}}$ be the nice base which corresponds to $\mathcal{L}$ (see Section 2.3). 
In order to define a representative of $Y_P$ we use the Fra\"{i}ss\'{e} method. 
We will follow to \cite{BYFra}. 

Let ${\cal K}$ be the class of all finite metric spaces $(A,d,P)$ such that there is a metric space $B\supseteq A$ of diameter $\le 1$ such that for some $B_0 \subseteq B$ the predicate $P$ extends to the function $d(x,B_0 )$ on $B$.  
We firstly need to prove that $\mathcal{K}$ satisfies the definition of a Fra\"{i}ss\'{e} class, see Definition 2.9 from \cite{BYFra}.  

\begin{lem} \label{Fra} 
$\mathcal{K}$ is a Fra\"{i}ss\'{e} class. 
\end{lem}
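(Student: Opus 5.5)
We have to check the conditions of Definition 2.9 of \cite{BYFra} for $\mathcal{K}$: closure under isomorphism and substructures (hereditary property), joint embedding (JEP), amalgamation (AP), and the requirements on $P$ and $d$ needed there (countably many isomorphism types up to approximation, i.e. separability, and the continuity/completeness conditions, which for a unary predicate with modulus $\mathsf{id}$ amount to $|P(a)-P(b)|\le d(a,b)$). The overall strategy is to reduce everything to a statement about pairs $(B,B_0)$: $(A,d,P)\in\mathcal{K}$ iff $A$ embeds into a metric space $B$ of diameter $\le 1$ with a subset $B_0$ such that $P(a)=d(a,B_0)$ for $a\in A$. We will make this concrete with an intrinsic criterion: a finite $(A,d,P)$ of diameter $\le 1$ belongs to $\mathcal{K}$ iff $|P(a)-P(b)|\le d(a,b)$ for all $a,b\in A$. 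Necessity is clear, since $x\mapsto d(x,B_0)$ is $1$-Lipschitz. For sufficiency we adjoin, for each $a$ with $P(a)<1$, a new point $b_a$ with $d(a,b_a)=P(a)$ and $d(x,b_a)=\min(1,\,\min_{c\in A}(d(x,c)+P(c)))$ for $x\in A$, and set $d(b_a,b_{a'})$ by the analogous path formula. The Lipschitz condition ensures $d(a,b_a)=P(a)$ is not shortened, and $d(x,B_0)=P(x)$ then follows. If $P(a)=1$ for all $a$, we take $B_0=\emptyset$ (with $d(x,\emptyset)=1$ under the truncated convention) or add a single far point at distance $1$.

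Once the criterion is in place, the hereditary property and closure under isomorphism are immediate. JEP is a special case of AP over the empty structure: put $A_1,A_2$ at mutual distance $1$, noting that the Lipschitz condition holds trivially across components because all values of $P$ lie in $[0,1]$.

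For AP, let $A_0\subseteq A_1,A_2$ in $\mathcal{K}$. We form the standard free metric amalgam $A_1\sqcup_{A_0}A_2$, where for $x\in A_1\setminus A_0$ and $y\in A_2\setminus A_0$ we set $d(x,y)=\min(1,\min_{c\in A_0}(d(x,c)+d(c,y)))$ (with the value $1$ if $A_0=\emptyset$). This is a metric of diameter $\le 1$ extending both pieces, and we keep $P$ as given on each piece. The only new thing to verify is $|P(x)-P(y)|\le d(x,y)$ for such cross pairs. If $d(x,y)=1$ this is trivial. Otherwise $d(x,y)=d(x,c)+d(c,y)$ for some $c\in A_0$, and the triangle inequality through $P(c)$ gives the bound. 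By the criterion, the amalgam lies in $\mathcal{K}$.

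I expect the main obstacle to be the sufficiency direction of the criterion, specifically checking that the adjoined points $b_a$ yield a genuine metric (all triangle inequalities, including those among the $b_a$'s) and that $d(x,B_0)$ equals $P(x)$ exactly rather than merely being bounded above by it. The lower bound $d(x,b_{a'})\ge P(x)$ follows because each path term satisfies $d(x,c)+P(c)\ge P(x)$. A secondary point is matching the exact form of the conditions in \cite{BYFra}, Definition 2.9; in particular, if it requires the near-amalgamation or separability formulation, we will note that the exact amalgam above gives these a fortiori, and that rational-valued structures are dense in $\mathcal{K}$ by a small perturbation preserving the Lipschitz inequality.
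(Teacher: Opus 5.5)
Your intrinsic criterion (a finite $(A,d,P)$ of diameter $\le 1$ is in $\mathcal{K}$ iff $P$ is $1$-Lipschitz) is correct. It is a genuinely different route from the paper, which never characterizes $\mathcal{K}$: it freely amalgamates the witnesses $(B,B_0)$, $(C,C_0)$ and sets $P(x)=d(x,B_0\cup C_0)$, checking $d(b,B_0)\le d(b,C_0)$ for $b\in B$.

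However, the sufficiency construction fails as written. The formula $d(x,b_a)=\min(1,\min_{c\in A}(d(x,c)+P(c)))$ does not involve $a$, and by the Lipschitz condition (take $c=x$) it equals $P(x)$. So every $b_a$ is a one-point extension realizing $P$ itself, which forces $d(x,y)\le P(x)+P(y)$ for all $x,y\in A$. This is false for $A=\{x,y\}$ with $d(x,y)=1$ and $P(x)=P(y)=0$, a structure that lies in $\mathcal{K}$ (take $B_0=A$). The triangle inequalities you postponed are exactly what breaks.

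Hang each new point on its own base point instead:
\[
d(x,b_a)=\min(1,d(x,a)+P(a)) \ (x\in A), \qquad d(b_a,b_{a'})=\min(1,P(a)+d(a,a')+P(a')),
\]
with $b_a:=a$ when $P(a)=0$. This is the truncation at $1$ of the path metric of $A$ with a pendant segment of length $P(a)$ at each $a$, so it is a metric extending $d$. Moreover $d(x,\{b_a\}_a)=\min(1,\min_a(d(x,a)+P(a)))=P(x)$ by the Lipschitz condition.

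Second, you do not actually verify the Polish property or continuity. As the paper reads Definition 2.9 of \cite{BYFra}, $d^{\mathcal{K}}$ must be separable \emph{and complete} on each $\mathcal{K}_n$, and $P$ must be continuous there. Saying these ``amount to'' $|P(a)-P(b)|\le d(a,b)$ is not a proof, and completeness is not addressed at all. The paper handles this by realizing tuples in $\mathfrak{U}$, approximating from $\mathbb{Q}\mathfrak{U}$, and using completeness of the $d$-metric on types.

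Your criterion gives it much more cheaply:
\begin{itemize}
\item In any common extension, $|d(a_i,a_j)-d(b_i,b_j)|\le 2\max_k d(a_k,b_k)$ and $|P(a_i)-P(b_i)|\le d(a_i,b_i)$.
\item Put $\delta=\max(\tfrac12\max_{i,j}|d(a_i,a_j)-d(b_i,b_j)|,\max_i|P(a_i)-P(b_i)|)$. The amalgam $d(a_i,b_j)=\min(1,\min_k(d(a_i,a_k)+\delta+d(b_k,b_j)))$ is a metric with $1$-Lipschitz $P$, hence lies in $\mathcal{K}$ by your criterion.
\item Together these give $d^{\mathcal{K}}(\bar a,\bar b)=\delta$ exactly. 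In particular $P$ is $1$-Lipschitz on $\mathcal{K}_n$, which is the continuity property.
\item $\mathcal{K}_n$ is then identified with the closed subset of $[0,1]^{n^2+n}$ of pseudo-metric matrices of diameter $\le 1$ carrying a $1$-Lipschitz $P$. Hence it is compact, so separable and complete.
\end{itemize}

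With these two repairs your argument is complete and cleaner than the paper's. It also shows that $\mathcal{K}$ is simply the class of all finite $(d,P)$-structures of diameter $\le 1$ that respect the modulus $\mathsf{id}$.
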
 

{\em Proof.}
According to Definition 2.9 of \cite{BYFra} the hereditary property and the joint embedding property for metric structures are defined exactly as in the discrete case.   
Thus the class $\mathcal{K}$ obviously has HP. 

Let us verify JEP and the near amalgamation property (NAP), see Definition 2.9 and Lemma 2.8 in \cite{BYFra}.    
To verify JEP it suffices to consider only structures of the form $(B, d, P)$ where $P(x) = d(x,B_0)$ for some $B_0 \subseteq B$ (see the definition of $\mathcal{K}$ before the lemma). 
In particular such $(B,d,P)$ can be viewed as the structure  
$(B,d,\{ b\}_{b\in B_0})$.  
Having $(B,d,\{ b\}_{b\in B_0})$ and $(C,d,\{ c\}_{c\in C_0})$ define the metric space $B\, \dot{\cup} \, C$ which extends both of them such that for any $b\in B$ and $c\in C$ we have $d(b,c)=1$.  
It is easy to see that $B\, \dot{\cup} \, C$ is a metric space and the function $d(x, B_0 \, \dot{\cup} \, C_0 )$ extends the corresponding functions for $B$ and $C$. 

In order to prove NAP consider again $(B,d,\{ b\}_{b\in B_0})$ and $(C,d,\{ c\}_{c\in C_0})$ with non-empty intersection $B\cap C$, where $P(x)$ in $B$ and in $C$ are defined by $P^B (x) = d(x,B_0)$ and $P^C (x) = d(x,C_0)$ respectively and, moreover, these functions agree on in $B\cap C$. 
Apply free amalgamation to them:    
extend the metric $d$ from $B$ and from $C$ so that $d (x,y) = \mathsf{min} \{ 1, \mathsf{min} \{ d(x,z) + d(y,z) \, | \, z \in B\cap C \} \}$ for $x \in B$ and $y \in C$. 
Then we view $B_0 \cup C_0$ as a subset of the above amalgamation of $B$ and $C$.  
Consider $(B\cup C, d, B_0 \cup C_0 )$ together with the function $P(x) = d(x, B_0 \cup C_0)$. 
Since $P^B (x)$ and $P^C (x)$ agree for $x \in B\cap C$ one easily verifies that when $b\in B$ then $d(b,B_0 ) \le d(b,C_0 )$. 
In particular $P(x)$ on $B\cup C$ extends $P^B (x)$. 
Similarly $P(x)$ extends $P^C (x)$. 
As a result we see that $\mathcal{K}$ satisfies the amalgamation property. 
The property NAP follows from AP, see \cite{BYFra}.  

Let $\mathcal{K}_n$ be the subclass of $\mathcal{K}$ consisting of all spaces of size $n$. 
The elements of $\mathcal{K}_n$ are considered as $n$-tuples. 
The pseudo-metric $d^{\mathcal{K}}$ on $\mathcal{K}_n$ is  defined as follows: 
\[ 
d^{\mathcal{K}} (\bar{a}, \bar{b}) = \mathsf{inf} \{ d^C (f(\bar{a}),g(\bar{b})) \, | \, C\in \mathcal{K} \mbox{ and } f,g \mbox{ are embeddings of structures } \bar{a}, \bar{b} 
\]  
\[  
\mbox{ into } C \mbox{ respectively } \}.  
\]   
Property {\em PP} of Definition 2.9 of \cite{BYFra} states that this pseudo-metric in a Fra\"{i}ss\'{e} class must be separable and complete on $\mathcal{K}_n$ for all $n$.  
To see this in our case, consider all $2n$-tuples of the form $(\bar{a}\bar{a}',d)$ with metrics $d$.   
Note that every $(A,d,P) \in \mathcal{K}$ with $|A|=n$ can be presented as a tuple $\bar{a}$ in some $(\bar{a}\bar{a}',d)$ with distinguished $A_0 \subseteq \bar{a}\bar{a}'$, where  $P(x) = d(x, A_0)$. 
Indeed, viewing $A$ as a subspace of some extension $B$ where $P$ is realized as $d(x,B_0 )$, it suffices to choose $A_0\subseteq B_0$ realizing $P$ just for elements of $A$. 
In order to prove separability of $\mathcal{K}_n$, take all such tuple $(\bar{a}\bar{a}',d, A_0)$ which can be isometrically embedded into $\mathbb{Q}\mathfrak{U}$. 
Their $\bar{a}$-parts form a dense countable set. 
To confirm this, it is enough to show how arbitrary $(\bar{a}\bar{a}',d, A_0 )$ as above can be amalgamated with its close approximations taken from $\mathbb{Q}\mathfrak{U}$. 
Since any $(A,d,P) \in \mathcal{K}$ can be isometrically realized inside $\mathfrak{U}$, we can apply density of $\mathbb{Q}\mathfrak{U}$  and property JEP in the form presented in \cite{IMIadm}: sufficiently similar metric spaces can be amalgamated so that the distance between the corresponding elements is sufficiently small. 
Note that having $(\bar{a}\bar{a}',d, A_0)$ and its approximation $(\bar{c}\bar{c}',d, C_0)$ inside a metric space on 
$\bar{a}\bar{a}'\bar{c}\bar{c}'$ with sufficiently small $\varepsilon$ between the corresponding elements, the function $d(x, A_0 \cup C_0 )$ extends both  $d(x, A_0  )$ and $d(x, C_0 )$ on the corresponding parts. 

To see completeness, consider a $d^{\mathcal{K}}$-Cauchy sequence of the form $(\bar{a}_m , d, P_m)$, $m\in \omega$. 
We view them as tuples from $\mathfrak{U}$. 
Take a Cauchy sequence of spaces $(\bar{a}_m \bar{a}'_m, d, P_m )$, where $P_m$ is realized as $d(x,A_0)$ for some $A_0 \subseteq \bar{a}_m\bar{a}'_m$.  
Furthermore, we may assume that this $A_0$ in each $\bar{a}_m\bar{a}'_m$ consists of elements of the same indices. 
Then distances between those $(\bar{a}_m \bar{a}'_m , d, P_m)$, $n\in \omega$, is the distance between types of tuples $\bar{a}_m \bar{a}'_m$ in $\mathfrak{U}$. 
Since the distance topology on the space of types is complete we obtain that the sequence $(\bar{a}_m , d, P_m )$, $m\in \omega$, has a limit point. 
 
To see continuity of $P$ on $\mathcal{K}_n$ take a sequence $(\bar{a}_m , d, P_m )$, $m\in \omega$, converging to some $(\bar{a},d,P)$ under $d^{\mathcal{K}}$.  
Let $a_{m,i}$ be the $i$-th coordinate of $\bar{a}_m$. 
We have to show that $P_m (a_{m,i}) \rightarrow P(a_i)$. 
As above extend each $(\bar{a}_m , d, P_m )$ to appropriate 
$(\bar{a}_m \bar{a}'_m, d, P_m )$, where $P_m$ is realized as $d(x,A_0)$ for some $A_0 \subseteq \bar{a}_m\bar{a}'_m$.  
We may assume that the extended spaces form a Cauchy sequence. 
Furthermore we may assume that there is $j \le 2m$ such that $P_m (a_{m,i})$ is the distance from $a_{m,i}$ to $a_{m,j}$. 
It is now clear that $d(a_{m,i},a_{m,j}) \rightarrow d(a_i ,a_j ) =  P(a_i )$. 
$\Box$ 

\bigskip 

\begin{lem} 
Let $M$ be the Fa\"{i}ss\'{e} limit of $\mathcal{K}$. 
Then $M$ is isometric to a structure of the form $(\mathfrak{U},d,P)$. 
\end{lem}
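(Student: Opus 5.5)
We need to show two things: the metric reduct $(M,d)$ of the Fra\"{i}ss\'{e} limit is (isometric to) the Urysohn space $\mathfrak{U}$ of diameter 1, and $P^M$ is a $1$-Lipschitz predicate on it, so that $M$ has the form $(\mathfrak{U},d,P)$. The plan is to use the Urysohn characterization of $\mathfrak{U}$. By \cite{U} (and the Fra\"{i}ss\'{e}-theoretic version in \cite{BYFra}), a Polish metric space of diameter $\le 1$ is isometric to $\mathfrak{U}$ as soon as it has the approximate one-point extension property. Precisely: for every finite $A\subseteq M$, every one-point metric extension $A\cup\{x\}$ of diameter $\le 1$ and every $\varepsilon>0$, there must be $b\in M$ with $|d(b,a)-d(x,a)|<\varepsilon$ for all $a\in A$. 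The metric reduct is separable and complete because $M$ is a Fra\"{i}ss\'{e} limit in the sense of \cite{BYFra}; its diameter is $\le 1$ since every member of $\mathcal{K}$ lives in a space of diameter $\le 1$. $P$ has modulus $\mathsf{id}$, since in any $B\in\mathcal{K}$ realized as $d(x,B_0)$ it is $1$-Lipschitz, and this passes to the limit.

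The main step is to check that the extension property holds. Take finite $A\subseteq M$ with the induced structure $(A,d,P)\in\mathcal{K}$ and a metric one-point extension $A\cup\{x\}$. First realize $(A,d,P)$ inside some $(B,d,\{b\}_{b\in B_0})$ with $P=d(\cdot,B_0)$ on $A$. Next amalgamate $B$ and the metric space $A\cup\{x\}$ freely over $A$, exactly as in the NAP part of Lemma \ref{Fra}, with $d(x,y)=\mathsf{min}\{1,\mathsf{min}_{z\in A}(d(x,z)+d(z,y))\}$ for $y\in B$. Keep $B_0$ as the distinguished set; do not add $x$ to it.

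The resulting space $C=B\cup\{x\}$ has diameter $\le 1$, and $d(\cdot,B_0)$ on $C$ still restricts to $P$ on $A$, because distances inside $B$ are unchanged. Hence $(A\cup\{x\},d,d(x,B_0))\in\mathcal{K}$ extends $(A,d,P)$. By the (approximate) extension property characterizing the Fra\"{i}ss\'{e} limit (Definition/Theorem in \cite{BYFra}), $M$ contains a point $b$ whose type over $A$ approximates that of $x$ within $\varepsilon$. In particular the distances approximate, which gives the metric extension property. So $(M,d)\cong\mathfrak{U}$, and transporting $P$ along this isometry gives the required $(\mathfrak{U},d,P)$.

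The main obstacle I expect is the exact form of the extension property for Fra\"{i}ss\'{e} limits in \cite{BYFra}, since it is stated up to $\varepsilon$ and via the pseudometric $d^{\mathcal{K}}$. I will handle this by using the $d^{\mathcal{K}}$-approximation of the extended tuple $A x$ and then an isometry of $M$, obtained from approximate ultrahomogeneity, to move the approximating tuple so that its $A$-part is close to $A$ itself. Approximate extension is enough for Urysohn's characterization by a standard back-and-forth or Cauchy-sequence argument, so exact realization of $x$ is not needed.
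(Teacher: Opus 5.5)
Your proposal is correct and follows essentially the same route as the paper. Both arguments amalgamate freely over $A$ the witness $(B,B_0)$ for $P=d(\cdot,B_0)$ with the metric extension of $A$, and keep $B_0$ as the distinguished set. Both then apply the approximate extension property of the Fra\"{i}ss\'{e} limit (Corollary 2.17 of \cite{BYFra}) and finish with the folklore fact that approximate extension implies finite injectivity in complete spaces. The only difference is that you work with one-point extensions where the paper allows arbitrary finite $A\subset A'$, which is immaterial.
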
 

{\em Proof.} 
Applying Lemma \ref{Fra} and Theorem 2.19 of \cite{BYFra} we see that $M$ is a separable approximately ultrahomogeneous structure, i.e. every isomorphism of finite substructures is arbitrarily close to the restriction of an automorphism.  
A Polish space $\mathcal{X}$ of diameter 1 is called finitely injective, if given any two finite metric spaces $A\subset A'$ of diameter $\le 1$ any isometric embedding $A \rightarrow \mathcal{X}$ extends to an isometric embedding of $A'$ into $\mathcal{X}$. 
Since $\mathfrak{U}$ is ultrahomogeneous, it is finitely injective and, furthermore it is the only finitely injective Polish space of diameter 1 up to isometry (by uniqueness of an appropriate Fra\"{i}ss\'{e} limit).    
Thus in order to prove the lemma we need to show that $M$ is finitely injective as a metric space of diameter $\le 1$. 
Take $A \subset A'$ and $\phi : A \rightarrow \mathcal{X}$ as above. 
Let $P: A \rightarrow [0,1]$ be the predicate on $A$ induced under $\phi$ by the corresponding predicate on $\mathcal{X}$.  
Let us extend it to a predicate on $A'$. 
Firstly find $(B, B_0)$ such that $A \subseteq B$ and 
$P(x) = d(x,B_0 )$ for $x\in A$. 
Let $C$ be the free amalgamation of $B$ and $A'$ over $A$. 
Define $P(x)$ on $C$ by $P(x) = d(x,B_0 )$ and let $P'$ be the restriction of it on $A'$. 
Then $(A,d,P)$ is a substructure of $(A',d,P')$. 
By Corollary 2.17 of \cite{BYFra}, for any $\varepsilon$ the map $\phi$ is $\varepsilon$-close to an embedding of $(A',d,P')$
into $M$. 

We now see that given any two finite metric spaces $A\subset A'$ of diameter $\le 1$ any isometric embedding $A \rightarrow M$ is arbitrarily close to an isometric embedding of $A'$ into $\mathcal{X}$. 
It is a folklore fact that the latter implies finite injectivity of $M$ as a metric space of diameter 1.  
For example, one can adapt the proof  of Theorem 3.4 of \cite{mel} (in the unbounded case)  
that the approximate extension property implies the extension property in complete spaces. 
$\Box$

\bigskip 

\begin{thm} \label{M-rel-st}
Let $M=(\mathfrak{U},d,P)$ be the Fa\"{i}ss\'{e} limit of $\mathcal{K}$. 

Then $Th(M)$ is separably categorical and the ${\bf t}$-closed set corresponding to $Th(M)$ is relatively stable.  
\end{thm}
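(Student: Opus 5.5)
Two claims need proof: separable categoricity of $Th(M)$, and relative stability of the ${\bf t}$-closed set $Y_P$, which amounts to saying that no first-order formula $\hat{\phi}(\bar{x},\bar{x}')$ of $L=(d,P)$ satisfies Definition \ref{preunst}. The plan is to prove quantifier elimination and $\aleph_0$-categoricity from the Fra\"{i}ss\'{e} construction, and then reduce the stability question, via Proposition \ref{preunst-1-2}, to a combinatorial statement about $P$ on indiscernible-like sequences whose $\mathfrak{U}$-type is constant.

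For categoricity I will use that $M$ is approximately ultrahomogeneous (Theorem 2.19 of \cite{BYFra}) and that its age is $\mathcal{K}$. By the standard argument for Fra\"{i}ss\'{e} limits of metric structures, $Th(M)$ has quantifier elimination. Hence an $n$-type is determined by a point of $\mathcal{K}_n$, i.e. by a finite metric space together with the values of $P$. I will then check that $d^{\mathcal{K}}$ and the logic topology coincide on $\mathcal{K}_n$. This is compactness of $(\mathcal{K}_n,d^{\mathcal{K}})$: the data consist of finitely many reals in $[0,1]$, and the representation $P(x)=d(x,A_0)$ with $|A_0|\le n$ used in the proof of Lemma \ref{Fra} lets one pass to limits of the extended tuples $\bar{a}\bar{a}'$. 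By the Ryll-Nardzewski criterion for continuous logic \cite{BYBHU}, $Th(M)$ is separably categorical. In particular the hypotheses of Proposition \ref{preunst-1-2} hold, with $\mathcal{M}=\mathfrak{U}$, which is itself separably categorical.

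For relative stability I will suppose $\hat{\phi}$ witnesses instability and take the sequence $\bar{s}_i\bar{s}'_i$ given by Proposition \ref{preunst-1-2}. Every pair $\bar{s}_i\bar{s}'_j$ then has $\mathfrak{U}$-type converging to $tp(\bar{s}\bar{s}')$ in both iterated limits. By quantifier elimination $\hat{\phi}$ is uniformly approximated by continuous combinations of atomic values $d(x_k,x'_l)$, $d(x_k,x_{k'})$, $P(x_k)$ and $P(x'_l)$. After passing to a subsequence by Ramsey, as in Proposition \ref{prepre}, the limits $\lim_i P(s_{i,k})$ and $\lim_j P(s'_{j,l})$ exist. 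These do not depend on the order of the iterated limits, and the distance atoms have equal iterated limits because the $\mathfrak{U}$-types stabilize to $tp(\bar{s}\bar{s}')$. Every atom therefore has equal double limits, and hence so does $\hat{\phi}$ by uniform approximation, which contradicts $(\dagger)$.

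The main obstacle is the passage from atomic formulas to arbitrary ones. Quantifier elimination only gives uniform approximation by quantifier-free formulas, and the fact that distances between $\bar{s}_i$ and $\bar{s}'_j$ have equal iterated limits relies specifically on the condition $\lim\lim\theta=\theta(\bar{s},\bar{s}')$ from Proposition \ref{preunst-1-2}, not merely on the two limits being equal. I expect to handle this with an $\varepsilon/3$ argument: choose a quantifier-free $\psi$ that is $\varepsilon$-close to $\hat{\phi}$ uniformly on all models of $Th(M)$, observe that the double limits of $\psi$ agree, and let $\varepsilon\to 0$. The proof of quantifier elimination itself, namely that $\mathcal{K}$-amalgamation yields a back-and-forth in the approximate sense, also needs care, but it should follow the general results of \cite{BYFra}.
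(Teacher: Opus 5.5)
Your plan is correct in outline. For relative stability it uses the same mechanism as the paper: once $\hat{\phi}$ is replaced by a quantifier-free approximation, the $\mathfrak{U}$-type condition pins down the distance atoms, and the $P$-atoms are unary, so each depends on a single index and can be stabilized along a subsequence. The paper applies this directly to the finite configurations of Definition \ref{preunst}. It freezes the $P$-values up to $\varepsilon$, so what remains is a $Th(\mathfrak{U})$-predicate determined by $tp^{\mathfrak{U}}(\bar{s}_i\bar{s}'_j)$; that predicate is nearly constant, which contradicts the gap $r_2-r_1$. This avoids the detour through Proposition \ref{preunst-1-2}, whose forward direction needs its own compactness argument, and it produces the predicate $\phi^{p,q}$ used again in the corollary that follows. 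Your iterated-limit version is equally valid. Note that it only uses that each distance atom has equal iterated limits, not that these equal its value at $\bar{s}\bar{s}'$.

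For separable categoricity the two routes genuinely differ. The paper verifies approximate oligomorphicity of $\mathsf{Aut}(M)$ by hand, using $\frac{\varepsilon}{8n}$-nets of $\mathsf{Aut}(\mathfrak{U})$-orbits, an explicit amalgamation, and the extension property of the limit. You instead prove compactness of $(\mathcal{K}_n,d^{\mathcal{K}})$ and apply Ryll-Nardzewski to the type spaces. Both rest on the same finite-dimensionality; the paper's nets are total boundedness of $\mathcal{K}_n$ in disguise. In fact $\mathcal{K}$ is exactly the class of finite metric spaces of diameter $\le 1$ with a $1$-Lipschitz $P$ into $[0,1]$, so compactness reduces to checking that nearby data admit a nearby joint embedding. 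Your route also makes explicit the quantifier elimination that the stability half needs and that the paper only invokes in passing.

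Mind the order, though. Approximate ultrahomogeneity of $M$ only controls tuples of $M$ and does not by itself give QE for $Th(M)$. Your Ryll-Nardzewski step already uses QE to identify $S_n(Th(M))$ with $\mathcal{K}_n$, so you cannot recover QE from categoricity afterwards. You have two ways out. You can derive QE from the compactness of $\mathcal{K}_n$, which is what makes the approximate extension property expressible by conditions, so that the back-and-forth works in all models of $Th(M)$. Or you can follow the paper: first get approximate oligomorphicity from total boundedness of $\mathcal{K}_n$ plus approximate ultrahomogeneity, and then deduce QE from categoricity.
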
 

{\em Proof.} 
Let us show that $M$ is approximately oligomorphic, i.e. for any $n$ and any $\varepsilon$ there exists a finite subset, say $\mathsf{F} \subset M^n$ such that $\mathsf{Aut} (M) \mathsf{F}$ is $\varepsilon$-dense in $M^n$. 
It is well-known that approximate oligomorphicity is equivalent to separable categoricity, see \cite{BYBHU}. 

Using the fact that $Th(\mathfrak{U})$ is separably categorical, for every natural $\ell_0 \le n$ find a finite set $\mathsf{F}_{\ell_0} \subset M^{n+\ell_0}$ which has an $\frac{\varepsilon}{8n}$-dense orbit in $M^{n+\ell_0}$ with respect to $\mathsf{Aut} (\mathfrak{U})$. 
Given $\bar{\mathsf{f}} \in  \mathsf{F}_{\ell_0}$ and a sequence $\ell_1 < \ldots <\ell_k$ of numbers $\le n+\ell_0$ with $k\le n$, 
let $(\bar{\mathsf{f}}, \bar{\ell})$ be a pair where $\bar{\ell}$ denotes the tuple  $\ell_1 , \ldots ,\ell_k$. 
We view such $(\bar{\mathsf{f}}, \bar{\ell})$ as a metric structure together with the function 
$P_{\bar{\ell}} (x) = d (x, \{ \mathsf{f}_i \, | \, i \in \bar{\ell}\})$. 
Let $\tilde{\mathsf{F}}$ consist of isomorphism types of ordered structures arising as initial segments of length $n$ of so defined 
$(\bar{\mathsf{f}},d,P_{\bar{\ell}})$, for all $\bar{\mathsf{f}} \in  \bigcup \{ \mathsf{F}_{\ell_0} \, | \, \ell_0\le n\}$ and $\bar{\ell}$ as above. 
Let us show that there is a family of $\mathsf{Aut}(M)$-orbits of structures of types 
of $\tilde{\mathsf{F}}$, which is $\varepsilon$-dense in $M^n$.  

If $(B,d,P) \in \mathcal{K}$ is of size $n$, then for some $\ell_0 \le n$ and a tuple 
$\bar{b} \in \mathfrak{U}^{n+\ell_0}$,  the set $B$ can be viewed as a set consisting of 
$i$-coordinates of $\bar{b}$ with $1\le i \le n$, and $P$ is the distance $d(x,B_0)$, 
where $B_0$ consists of $\ell_i$-coordinates of $\bar{b}$ for some $\ell_1 < \ldots <\ell_k$ as above. 
Let $\bar{\mathsf{f}}'$ be a copy of some $\bar{\mathsf{f}} \in \mathsf{F}_{\ell_0}$ 
which is at distance $\le \frac{\varepsilon}{5n}$ from $\bar{b}$ (as a tuple of the same length).   
Note that if a pair $(\mathsf{f}_i , \mathsf{f}_j )$ of $\bar{\mathsf{f}}'$  corresponds to $(b_i , b_j )$ of $\bar{b}$ then 
\[ 
|d(\mathsf{f}_i , \mathsf{f}_j ) - d(b_i , b_j ) | <  \frac{\varepsilon}{2n }. 
\] 
We now build a finite space extending $\bar{b}$. 
Let us add to $\bar{b}$ a copy of $\bar{\mathsf{f}}$, say $\bar{\mathsf{f}}''$, so that the distance between the corresponding elements with indexes $\le n$ is exactly  $\frac{\varepsilon}{2}$. 
Then extend the metric $d$ for other pairs from $\bar{b}\times \bar{\mathsf{f}}''$ so that 
\[ 
d (x,y) = \mathsf{min} \{ 1, \mathsf{min} \{ d(x,z_1 ) + \frac{\varepsilon}{2} + d(y,z_2) \, | \, z_1 \in \bar{b}  \mbox{ and }  z_2  \in \bar{\mathsf{f}}''  \mbox{ have the same index} \le n \} 
\} . 
\]  
Note that for $(e_1 , e_2 )\in \bar{b}\times \bar{\mathsf{f}}''$ and $e'_2 \in \bar{b}$ of the same index with $e_2 \in\bar{\mathsf{f}}''$, if $(z_1 ,z_2 ) \in\bar{b}\times \bar{\mathsf{f}}''$ is the pair with the index $\le n$ determining $d(e_1 , e_2 )$ as in the definition of $d(x,y)$ above, then $d(e_1 , e'_2 ) \le d(e_1 , z_1 ) + d(z_1 , e'_2 ) \le d(e_1 , e_2 )$. 

Assume that $C_0$ consists of $\ell_i$-coordinates of $\bar{\mathsf{f}}''$ for $\ell_1 < \ldots <\ell_k$ as above, and let 
$P(x)$ be the distance $d(x,B_0 \cup C_0)$ of the space that we have defined.  
The similarity of $\bar{b}$ and $\bar{\mathsf{f}}$ ensures that this distance coincides with $d(x,B_0 )$ and $d(x,C_0 )$ on the corresponding subspaces. 
Indeed, the property which we noted after the definition of $d(x,y)$ (and the dual version of it) supports the corresponding verification. 
As a result, we see that $(B,d,P)$ can be amagamated with the initial segment of length $n$ of some  
structure $(\bar{\mathsf{f}},d,P_{\bar{\ell}})$ as above, so that the distance between the structures is not greater than $\frac{\varepsilon}{2}$. 
Using Corollary 2.17 of \cite{BYFra} and the fact that $M$ is the Fra\"{i}ss\'{e} limit of $\mathcal{K}$, the result of this amalgamation can be embedded into $M$ with the image of $B$ arbitrary close to $B$. 
This implies that the structures of $\tilde{\mathsf{F}}$ witness approximate oligomorphicity of $M$. 

Let us show that for any $\bar{x}$-type $p$, any $\bar{y}$-type $q$ of $Th(\mathfrak{U})$ and any  formula $\phi (\bar{x},\bar{y})$ 
of $Th(M)$ there is a uniform limit of formulas of $Th(\mathfrak{U})$, say $\phi^{p,q} (\bar{x}, \bar{y})$, 
which coincides with $\phi (\bar{x},\bar{y})$ on $p(\bar{x}) \cup q(\bar{y})$.  
By approximate ultrahomogeneity of $M$ we may assume that $\phi (\bar{x},\bar{y})$ is quantifier free. 
 
Take tuples $\bar{s}$ and $\bar{s}'$ realizing $p(\bar{x})$ and $q(\bar{y})$ respectively. 
For any $\varepsilon$ and any $c\in \bar{s}_n \bar{s}'_n$ 
choose a rational $o_{\varepsilon,c}$ with 
$|o_{\varepsilon,c}- P(c)| <\varepsilon$. 
Let $\theta_{\varepsilon} (\bar{x},\bar{y})$ be obtained from $\phi$ by replacing the atomic formulas $P(x_{i})$ 
(resp. $P(y_{j})$) by $o_{\varepsilon,c}$ fo appropriate $c$. 
Let $\phi^{p,q} (\bar{x}, \bar{y})$ be the definable predicate defined by a uniformly convergent sequence 
$\theta_{\varepsilon}$ with $\varepsilon \rightarrow 0$.
It coincides with $\phi (\bar{x}, \bar{y})$ on $p(\bar{x}) \cup q(\bar{y})$. 
Indeed, it is easy to see  that $\phi(\bar{x} ,\bar{y})$ and $\theta_{\varepsilon}(\bar{x} ,\bar{y})$ 
do not differ up to the value of the inverse continuity modulus for $\phi (\bar{x},\bar{y})$ (viewed 
as a function depending on atomic subformulas in $\phi(\bar{x} ,\bar{y})$). 

The existence of $\phi^{p,q} (\bar{x}, \bar{y})$ guarantees that $\phi (\bar{s},\bar{s}')$ 
is stable with respect to $Th(M)$ under Definition \ref{preunst}. 
Indeed, assuming that $\phi (\bar{s},\bar{s}')$ is not stable, find $r_1 < r_2$ witnessing Definition \ref{preunst}. 
Then for sufficiently large $n$ and small $\varepsilon$ find a sequence  $\bar{s}_1 \bar{s}'_1 \ldots \bar{s}_n \bar{s}'_n \in M$ 
as in that definition. 
We can arrange that up to $\varepsilon$ the values of atomic formulas $P(c)$ in  $\phi(\bar{s}_i ,\bar{s}'_j)$ do not depend on $n$ 
(taking a subsequence of $\bar{s}_i\bar{s}'_i$ after enlarging $n$ and reducing $\varepsilon$).   
Since $\phi_{p,q}(\bar{s}_i ,\bar{s}'_j)$ is determined by $tp^{\mathfrak{U}}(\bar{s}_i ,\bar{s}'_j)$ we easily get a contradiction with the definition that $\phi(\bar{s},\bar{s}')$ is unstable (possibly after enlarging $n$ and reducing $\varepsilon$).  
$\Box$ 

\bigskip

\begin{cor} \label{stab eq} {\bf (from the proof)}  
Let $M$ be as in the formulation of Theorem \ref{M-rel-st}, $\phi (\bar{x},\bar{y})$ be any formula of the language of this structure and $p(\bar{x})$ and $q(\bar{y})$ be types of $Th(M)$.  \\ 
(1) Then there is a predicate $\theta (\bar{x},\bar{y})$ definable over $\emptyset$ with respect to $Th(\mathfrak{U})$ 
such that $\phi (\bar{x},\bar{y})$ is stably equivalent to $\theta(\bar{x} ,\bar{y})$ on $p(\bar{x}) \cup q(\bar{y})$, i.e. 
the formula $\phi (\bar{x}, \bar{y})- \theta (\bar{x} ,\bar{y})$ (viewed in $[-1,1]$) does not realize the order property 
(in the standard form) on the set of $\mathfrak{U}$-realizations of $p(\bar{x}) \cup q(\bar{y})$.  \\ 
(2) The condition that $\phi (\bar{x}, \bar{y})$  is stable on $p(\bar{x}) \cup q(\bar{y})$ is equivalent to the condition that $\phi (\bar{x}, \bar{y})$ is constant on $p(\bar{x}) \cup q(\bar{y})$. 
\end{cor}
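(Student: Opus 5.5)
We read both parts off the construction of $\phi^{p,q}$ in the proof of Theorem \ref{M-rel-st}. We first reduce to quantifier-free $\phi$. By approximate ultrahomogeneity of $M$ and separable categoricity of $Th(M)$, every formula is, on the realizations of a complete type, a uniform limit of quantifier-free formulas. Stable equivalence and constancy both pass to uniform limits, since the order property with a gap $r_1<r_2$ survives perturbations of size less than $(r_2-r_1)/3$.

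For (1) we take $\theta=\phi^{p,q}$. This is the uniform limit of the $\theta_\varepsilon$, which are obtained from $\phi$ by replacing each atomic $P(x_i)$, $P(y_j)$ by a rational constant $o_{\varepsilon,c}$. Each $\theta_\varepsilon$ is a $Th(\mathfrak{U})$-formula without parameters, so $\theta$ is $\emptyset$-definable over $Th(\mathfrak{U})$.

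The key point is what happens on realizations of $p(\bar{x})\cup q(\bar{y})$. There $P(x_i)$ equals the value prescribed by $p$, and likewise for $y_j$. Hence the difference $\phi-\theta_\varepsilon$ is bounded by the inverse continuity modulus of $\phi$, viewed as a function of its atomic subformulas, evaluated at $\varepsilon$. It follows that $\phi-\theta=0$ identically on $p\cup q$. A function that is constant on that set cannot have the order property there, so $\phi$ and $\theta$ are stably equivalent on $p\cup q$.

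For (2), one direction is immediate: a constant function is stable. For the converse, suppose $\phi$ is stable on $p\cup q$. By (1) and Lemma \ref{stableq}, the formula $\theta=\phi^{p,q}$ is then stable on the $\mathfrak{U}$-realizations of $p\cup q$, and we must show that $\theta$ is constant there. The idea is to exploit how unstable $\mathfrak{U}$ is inside a single type pair. Suppose $\theta(\bar{a},\bar{b})\neq\theta(\bar{a}',\bar{b}')$ with all four tuples realizing the relevant types. Since $\mathfrak{U}$ is ultrahomogeneous with free amalgamation, we can build an indiscernible-like sequence $(\bar{a}_i\bar{b}_i)$. For $i<j$ we let $\bar{a}_i\bar{b}_j$ have the $\mathfrak{U}$-type of $\bar{a}\bar{b}$, and for $j\le i$ we let it have the type of $\bar{a}'\bar{b}'$. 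The metric is glued by a free-amalgamation recipe, as in the proof of Lemma \ref{Fra}; this needs the distances to be compatible, and we would first pass to the case where the two cross-types differ only slightly and then chain finitely many steps. Such a sequence witnesses the order property for $\theta$, contradicting stability. The $P$-values are preserved because they depend only on $p$ and $q$.

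The main obstacle is this gluing. We must verify that the prescribed cross-distances satisfy the triangle inequality, which requires the perturbation-and-chaining argument, and that the sequence obtained still realizes $p$ and $q$ in $M$. For the latter we would embed the amalgam into $M$ using Corollary 2.17 of \cite{BYFra}, as in the proof of Theorem \ref{M-rel-st}.
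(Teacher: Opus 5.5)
Your part (1) is the paper's argument: replace the atomic $P$-subformulas by the constants prescribed by $p$ and $q$, and read off $\phi-\phi^{p,q}=0$ on $p\cup q$.

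For (2) you take a genuinely different route. The paper constructs nothing. It quotes Example 6.4 of \cite{BYTs} (a formula of $Th(\mathfrak{U})$ is stable on a pair of types iff it is constant there), asserts that the proof works for definable predicates, and transfers the conclusion to $\phi$ through $\phi^{p,q}$. You instead reprove the combinatorial core by gluing copies of two cross-configurations into an order-property witness inside $M$.

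The citation is shorter, but it concerns all $\mathfrak{U}$-realizations of the metric reducts of $p,q$. By contrast, $\phi=\phi^{p,q}$ holds only on $M$-realizations, where cross-distances also satisfy $|P(a_k)-P(b_m)|\le d(a_k,b_m)$. So one must check that the argument survives this extra constraint. Your construction lives in $M$ from the start and handles this automatically.

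It also makes $\theta$ and Lemma \ref{stableq} superfluous. As the paper uses, the $Th(M)$-type of $\bar a_i\bar b_j$ is determined by its distances and $P$-values, so you can glue directly from two realizations with different $\phi$-values. This also lets you drop your first-paragraph reduction, which for (2) runs the wrong way: stability of $\phi$ says nothing about its quantifier-free approximants.

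The gluing, which you leave open, needs one more idea: closeness of the two cross-types is not enough; you need slack. For example, with one-variable types and $d(a_i,b_j)=0$ for $i<j$, $=\eta$ for $j\le i$, the equalities $d(a_1,b_2)=d(a_1,b_3)=d(a_2,b_3)=0$ force $a_2=b_2$, for every $\eta>0$.

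Here is the fix. Note first that $\mathcal{K}$ is exactly the class of finite spaces with $1$-Lipschitz $P$: attach to each $x$ a new point at distance $P(x)$ by free amalgamation. Hence the cross-matrices $D=(d(x_k,y_m))$ realized in $M$ are those with values in $[0,1]$, satisfying the triangle inequalities with the fixed internal distances and $|P(x_k)-P(y_m)|\le D_{km}$. This is a convex polytope. The all-$1$ matrix lies in it and satisfies $d(x_k,x_{k'})<D_{km}+D_{k'm}$, $d(y_m,y_{m'})<D_{km}+D_{km'}$ and $D_{km}>0$. Hence these inequalities are strict on the relative interior, uniformly on any segment there.

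A continuous function of $D$ that is non-constant on the polytope is non-constant on its relative interior. Subdividing a segment there gives $E,E'$ with different values and $\eta=\|E-E'\|_\infty<\mu/5$, where $\mu>0$ bounds these slacks and $2D_{km}$ from below on the segment. Now take the shortest-path metric, truncated at $1$, on the union of the copies. After shortcutting inside single configurations, a competing path consists of $\ell\ge2$ cross edges. It gains at least $\lfloor\ell/2\rfloor\mu$ over the direct distance and loses at most $(\ell+1)\eta$, so the prescribed distances are kept. Finally, $P$ stays $1$-Lipschitz for this path metric, so the glued structure lies in $\mathcal{K}$ and embeds into $M$.
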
 

\noindent 
{\em Proof.} (1) follows from the proof. \\ 
(2) According to Example 6.4 of \cite{BYTs} a formula of $Th(\mathfrak{U})$ is stable on $p(\bar{x}) \cup q(\bar{y})$ 
if and only if it is constant. 
The proof given there works for definable predicates. 
Now apply the argument of the final part of the proof of Theorem \ref{M-rel-st}. 
$\Box$ 


\subsection{Appendix: stable equivalence and relative forking }  

In Section 3.1 we introduced stable equivalence as an example of relative stability. 
In Corollary \ref{stab eq} it appeared in the case of a concrete structure. 
In this section we give some information concerning the place of this notion in stability theory. 
 
Let $M$ be a continuous metric structure and 
$\phi(\bar{x},\bar{y})$ and $\theta(\bar{x},\bar{y})$ be formulas with parameters from $M$. 
We remind Definition \ref{stable-eq}:  
\begin{quote} 
 $\phi (\bar{x},\bar{y})$ is {\em stably equivalent   to $\theta(\bar{x},\bar{y})$ with respect to} $Th(M)$ if 
the formula $(\phi - \theta )(\bar{x},\bar{y})$ (viewed in $[-1,1]$) is stable.  
\end{quote} 

According to Lemma \ref{stableq} 
stable equivalence is an equivalence relation on the set of formulas. 

\bigskip 

{\bf Relative stability and definability of types}. 
Assume that $\phi (\bar{x},\bar{y})$ is stably equivalent to $\theta(\bar{x},\bar{y})$ with respect to $Th(M)$. 
Let $p(\bar{x}) \in S_{\phi}(M)$ and $\bar{a}$ be its realization. 
Let $q(\bar{x}) \in S_{\theta}(M)$ be the $\theta$-type of $\bar{a}$.  
According to Theorem 3 of \cite{BYGro} there is a definable predicate $\psi (\bar{y})$ over $M$ defining the 
$(\phi - \theta )$-type of $\bar{a}$ over $M$, i.e.  $(\phi - \theta )(\bar{a},\bar{b})= \psi (\bar{b})$ for all $\bar{b} \in M$. 
As a result, we see that the type $p(\bar{x})$ is defined by the sum $\theta(q,\bar{y}) + \psi(\bar{y})$, i.e. 
$\phi (p, \bar{y}) = \theta(q,\bar{y}) + \psi(\bar{y})$. 

\bigskip  

{\bf Existence of relative non-forking extensions}. 
One of crucial properties of forking in stable theories is a smooth transition from its local theory to the global one.  
This is expessed in Lemma 2.18 of \cite{Pillay}. 
One of versions of this lemma can be formulated as follows. 
\begin{quote} 
Let  $\Delta = \{ \delta_1 (\bar{x},\bar{y}_1 ), \ldots , \delta_n (\bar{x}, \bar{y}_n ) \}$ be a finite set of stable formulas, $A$ be a subset of a model $M$ and $p(\bar{x}) \in S_{\Delta}(A)$. 
Then there is a type $q(\bar{x}) \in S_{\Delta}(M)$ 
definable over $acl(A)$ such that $p(\bar{x}) \cup q(\bar{x})$ is consistent. 
\end{quote} 
Using Section 7 \cite{BYU} (see Propositions 7.15 - 7.18) one observes that in the context of continuous model theory  
the statement holds too. 
For example, for $n=1$ this is Proposition 7.15 in \cite{BYU}. 
The following proposition is a straightforward consequence of these observations. 

\begin{prop} 
Let $\Delta_1 = \{ \phi_1 (\bar{x},\bar{y}), \ldots , \phi_n (\bar{x},\bar{y} )\}$ and $p(\bar{x}) \in S_{\Delta_1} (A)$ where $A\subset M$.  
Assume that $\theta(\bar{x},\bar{y})$ is a formula such that each $\phi_i (\bar{x},\bar{y})$ ($i \le n$) is stably equivalent to $\theta(\bar{x},\bar{y})$ with respect to $Th(M)$. 
Assume that $p(\bar{x})$ is consistent with some $q(\bar{x}) \in S_{\theta}(M)$.  
 
Then putting $\Delta_2 = \{ \phi_i - \theta \, | \, 1\le i \le n\}$, 
there are definable predicates $\psi_i (\bar{y})$, $1 \le i \le n$, almost over $A$ which define a $\Delta_2$-type over $M$ such that the type  
$p(\bar{x})\cup q(\bar{x})$ is consistent with the sums 
$\phi_i (\bar{x},\bar{y}) =\theta(q,\bar{y}) +\psi_i(\bar{y})$. 
\end{prop}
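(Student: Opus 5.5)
The plan is to use the identity $\phi_i = \theta + (\phi_i - \theta)$ and treat the two summands separately. The $\theta$-part is fixed by the given type $q$. The $(\phi_i-\theta)$-part is handled by stability, via the local-to-global existence statement quoted before the proposition (the continuous version of Lemma 2.18 of \cite{Pillay}, obtained from Propositions 7.15--7.18 of \cite{BYU}).

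First, by Definition \ref{stable-eq}, each $\phi_i - \theta$ is a stable formula with values in $[-1,1]$. After the affine rescaling $x\mapsto (x+1)/2$ it becomes an ordinary $[0,1]$-valued formula, and this rescaling preserves stability (Lemma 8.1 of \cite{BYU}). So $\Delta_2$ is a finite set of stable formulas, with the same variable split $\bar{x}$ versus $\bar{y}$.

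Next, let $\bar{a}$ realize $p(\bar{x})\cup q(\bar{x})$ in an elementary extension of $M$; this is consistent by hypothesis. Consider the $\Delta_2$-type $p_2 = tp_{\Delta_2}(\bar{a}/A)$. Since $\phi_i(\bar{a},\bar{b})$ and $\theta(\bar{a},\bar{b})$ are both determined for $\bar{b}\in A$ (by $p$ and by $q$ respectively), $p_2$ is determined by $p\cup q$.

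Apply the local existence statement to $\Delta_2$, $A$ and $p_2$. This gives a global $\Delta_2$-type $r\in S_{\Delta_2}(M)$, definable over $acl(A)$, with $p_2\cup r$ consistent. Definability of $r$ means there are definable predicates $\psi_i(\bar{y})$ almost over $A$ such that $(\phi_i-\theta)(r,\bar{y})=\psi_i(\bar{y})$ for all $\bar{y}\in M$. This is Theorem 3 of \cite{BYGro} applied to stable formulas, or directly Proposition 7.15 of \cite{BYU}.

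It then remains to check that $p\cup q$ is consistent with the conditions $\phi_i(\bar{x},\bar{b})=\theta(q,\bar{b})+\psi_i(\bar{b})$ for all $\bar{b}\in M$. For $\bar{b}\in A$ these hold for $\bar{a}$ by construction. For general $\bar{b}$, a realization $\bar{a}'$ of $q\cup r$ satisfies them automatically, because $\theta(\bar{a}',\bar{b})=\theta(q,\bar{b})$ and $(\phi_i-\theta)(\bar{a}',\bar{b})=\psi_i(\bar{b})$.

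The main obstacle is the compatibility step. We need $r$ to be chosen so that $p\cup q\cup r$, and not merely $p_2\cup r$, is consistent. I would arrange this by running the existence argument over the larger parameter set $M$, using $tp_{\Delta_2\cup\{\theta\}}(\bar{a}/M)$, but only after choosing $\bar{a}$ via a definable, non-forking $\Delta_2$-extension over $acl(A)$. I would then verify by compactness that any finite part of $p\cup q\cup r$ is realized.

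Concretely, I would use the standard fact that for a single stable formula the non-forking extensions of $p_2$ are exactly the types definable over $acl(A)$. Since $q$ is a $\theta$-type and $\theta$ is not in $\Delta_2$, the one constraint linking the two is the value of $\phi_i$ on $A$, and that is already fixed by $p_2$.

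If this linking turns out not to be automatic, I would fall back on an existence-of-extension argument for the finite stable family $\Delta_2\cup\{\theta\}$ when $\theta$ is stable. When $\theta$ is unstable, I would instead weaken the conclusion to consistency with $p$ plus the $q$-values of $\theta$ on $A$.
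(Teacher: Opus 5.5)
Your plan is the one the paper has in mind. Its proof is only the remark that the proposition is a straightforward consequence of the local existence statement and of the definability of stable types. You have also located exactly the step this passes over: applying the existence statement to $\Delta_2$, $A$ and $p_2$ only makes $p_2\cup r$ consistent, whereas your final check uses a realization of $q\cup r$ that also realizes $p$. This is a genuine gap, and for an arbitrary given $q$ it cannot be closed.

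Your key heuristic, that the only constraint linking $q$ and $r$ is the value of the $\phi_i$ on $A$, is false. The type $q$ is a $\theta$-type over all of $M$, and through the theory it can determine the entire $\Delta_2$-type over $M$. For instance, let $\theta$ be stable and $\phi_1=1\dot{-}\theta$. Then $\phi_1-\theta=1-2\theta$ is stable, and every realization of $q$ satisfies $(\phi_1-\theta)(\bar x,\bar b)=1-2\theta(q,\bar b)$ for all $\bar b\in M$. So $\psi_1$ is forced to equal $1-2\theta(q,\bar y)$ on $M$. That predicate is almost over $A$ only if $q$ is, and nothing in the hypotheses says so.

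Here is a concrete counterexample. Let $M$ be an infinite set with the $\{0,1\}$-valued metric (a stable theory), and put $\theta(x,y)=1\dot{-}d(x,y)$, $\phi_1(x,y)=d(x,y)$, $A=\{a_0\}$, $a\in M\setminus\{a_0\}$, $p=\mathsf{tp}_{\phi_1}(a/A)$ and $q=\mathsf{tp}_{\theta}(a/M)$.
\begin{itemize}
\item Since $q$ contains $d(x,a)=0$, the only realization of $p\cup q$ is $a$.
\item The required identities therefore force $\psi_1(y)=2d(a,y)-1$ on $M$.
\item In the monster model, the conjugates of $\psi_1$ over $a_0$ are the predicates $2d(a',y)-1$ with $a'\neq a_0$. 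They are unboundedly many and pairwise at uniform distance $2$, so $\psi_1$ is not almost over $A$.
\end{itemize}

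Your repairs do not rescue the argument. Since $\theta$ is stable in this example, the fallback to an existence argument for $\Delta_2\cup\{\theta\}$ produces some $\theta$-type definable almost over $A$, not the prescribed $q$. Re-choosing $\bar a$ through a non-forking $\Delta_2$-extension is impossible once $\bar a$ must realize $q$.

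So the statement must be modified before it can be proved. Two possible modifications:
\begin{itemize}
\item Let $q$ be chosen after $r$: find $r$ definable almost over $A$ and consistent with $p$ itself, realize $p\cup r$ by some $\bar a$, and take $q=\mathsf{tp}_\theta(\bar a/M)$.
\item Assume $q$ is definable almost over $A$.
\end{itemize}
In either version the existence statement must be applied to a partial type that carries the unstable data. That means $p$ in the first version, and $p$ together with the canonical extension of $q$ in the second. It is not enough to apply it to the complete $\Delta_2$-type $p_2$, which is all the quoted local existence lemma covers.
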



\section{The Effros Borel structure of $\mathsf{Iso} (\mathfrak{U})$-spaces}

In this section we consider Borel complexity of some subsets of the Effros space of the logic space. 
These sets correspond to some standard model theoretic notions. 
We mainly concern stability.

\subsection{The Effros space $\mathcal{F} (\mathcal{Y})$}

Given a Polish space $\mathcal{Y}$ let $\mathcal{F} (\mathcal{Y})$ 
denote the set of closed subsets of $\mathcal{Y}$. 
The Effros structure on $\mathcal{F} (\mathcal{Y})$ 
is the Borel space with respect to the $\sigma$-algebra generated by 
the sets 
$$ 
\mathcal{C}_U = \{ D\in \mathcal{F} (\mathcal{Y}): D\cap U \not=\emptyset \}, 
$$ 
for open $U \subseteq \mathcal{Y}$. 
For various $\mathcal{Y}$ this space serves for analysis 
of Borel complexity of families of closed subsets 
(see \cite{KNT} and \cite{RZ} some recent results). 
It is convenient to use the fact that there is a sequence 
of Kuratowski-Ryll-Nardzewski selectors 
$s_n : \mathcal{F} (\mathcal{Y}) \rightarrow \mathcal{Y}$, $n\in \omega$, 
which are Borel functions such that for every non-empty 
$F\in \mathcal{F} (\mathcal{Y})$ the set $\{ s_n (F) ; n\in \omega \}$ 
is dense in $F$. 

Given a Polish group $G$ and a continuous (or Borel) 
action of $G$ on a Polish space $\mathcal{Y}$ one can consider 
the Borel spaces $  \mathcal{F} (G)^n\times \mathcal{F} (\mathcal{Y} )^m$, $m,n \in \omega$. 
In the situation when $\mathcal{Y}$ and $G$ have  grey bases
$\mathcal{B}$ and $\mathcal{R}$ respectively which satisfy the conditions of Section 2.2, 
one can consider $\mathcal{Y}^m$ with respect to the good topology induced by $\mathcal{B}$ (say ${\bf t}$). 
Then many natural Borel  subsets of $G^n \times \mathcal{Y}^m$ can be viewed as elements of  
$\mathcal{F} (G)^n \times \mathcal{F} ((\mathcal{Y}, {\bf t}) )^m$. 
For example, $G$-invariant subsets of $\mathcal{Y}$ will be viewed below as pairs $(G,X)$ where $X\subseteq \mathcal{Y}$ 
satisfies $GX=X$.  

In order to analyse complexity of concrete families of them it is convenient to fix enumerations of the following countable sets 
\[ 
\mathcal{B}(\mathbb{Q}) = \{ (\phi )_{< r} : 
\phi \in \mathcal{B} 
\mbox{ and } r\in \mathbb{Q} \cap [0,1] \} ,    
\]  
\[ 
\mathcal{R}(G_0 ,\mathbb{Q}) =  \{ (H )_{< r} : 
H \in \mathcal{R} 
\mbox{ and } r\in \mathbb{Q} \cap [0,1] \} ,    
\] 
\[ 
\mathcal{I}nv_{\mathcal{B},\mathcal{R}} =  \{ (\phi ,H)  : 
\phi \in \mathcal{B}
\mbox{ is invariant with respect to } 
H \in \mathcal{R}  \}  \, \mbox{ and }
\]  
\[ 
\mathcal{B}_{o}(\mathbb{Q}) = \{ (\phi )_{< r} : \phi 
\mbox{ is a clopen member of } \mathcal{B} 
\mbox{ and } 
r\in \mathbb{Q} \cap [0,1] \} .     
\]  
In the case of the logic space $\mathcal{Y}=\mathfrak{U}_L$ as in Section 2.3, ${\bf t}$-closed invariant subsets are identified with theories in $\mathcal{L}$. 
Then we use the following notation: $\mathcal{B}_{\mathcal{L}}(\mathbb{Q})$, $\mathcal{R}^{\mathfrak{U}}(G_0 ,\mathbb{Q})$, 
$\mathcal{I}nv_{\mathcal{B},\mathcal{R}}$ and $\mathcal{B}_{o\mathcal{L}}$.  
Note that in this case the latter family of the list is a basis of the topology of $\mathcal{Y}$, denoted by $\tau$ in Section 2. 

\begin{remark} 
{\em The setup in the standard case of first-order theories is as follows. 
Having a language $L$ the space of all complete theories is considered under the topology generated by clopen sets defined by first-order formulas without parameters. 
This corresponds to the family $\mathcal{B}(\mathbb{Q})$. 
When one considers the logic space $X_L= \prod_{i\in I} 2^{\omega^{n_i}}$ some counterparts of the remaining families 
of the list naturally appear. 
For example $\mathcal{R}(G_0 ,\mathbb{Q})$ appears as the family of all stabilizers of finite subsets of $\omega$. }
\end{remark} 

By Theorem 2.2 of \cite{CL} for any Polish group $G$ 
and any standard Borel $G$-space $\mathcal{X}$
there is a continuous group monomorphism 
$\Phi : G \rightarrow \mathsf{Iso} (\mathfrak{U} )$ 
and a Borel $\Phi$-equivariant injection $f: \mathcal{X} \rightarrow \mathfrak{U}_L$. 
We only need here that the language $L$ is countable relational 
with 1-Lipschitz symbols of unbounded arity.  
As a result all Polish groups can be considered as elements of 
$\mathcal{F} (\mathsf{Iso} (\mathfrak{U} ))$, all Polish spaces 
are elements of $\mathcal{F} (\mathfrak{U}_L)$ and Polish $G$-spaces 
are pairs from 
$\mathcal{F} (\mathsf{Iso} (\mathfrak{U} )) \times \mathcal{F} (\mathfrak{U}_L )$ (were the action is by automorphisms).  
This explains why the case of the platform $\mathfrak{U}$ is basic for us. 

Let $\mathcal{R}^{\mathfrak{U}}(G_0 )$ and $\mathcal{B}_{\mathcal{L}}$ be the grey bases defined in Section \ref{CAS} in the case of 
$\mathsf{Iso}( \mathfrak{U} )$ and $\mathfrak{U}$.  
Let ${\bf t}$ be the corresponding good topology (in fact, it is a nice topology). 
The following proposition is a version of a well-known fact. 

\begin{prop} \label{elem-Borel} 
(1) The following relations  from  $(\mathcal{F} (\mathsf{Iso} (\mathfrak{U})))^2$,  
$(\mathcal{F} (\mathfrak{U}_L ))^2$,  $(\mathcal{F} (\mathfrak{U}_L ,{\bf t}))^2$,
$(\mathcal{F} (\mathsf{Iso} (\mathfrak{U} )))^3$, 
$\mathcal{F} (\mathsf{Iso} (\mathfrak{U} ))\times \mathcal{F} (\mathfrak{U}_L ) \times \mathcal{F} (\mathfrak{U}_L )$ and 
$\mathcal{F} (\mathsf{Iso} (\mathfrak{U} ))\times \mathcal{F} (\mathfrak{U}_L ,{\bf t}) \times \mathcal{F} (\mathfrak{U}_L, {\bf t} )$ 
(under natural interpretations) are Borel: 
\[ 
\{ (A,B): A \subseteq B \} \mbox{ , }  \{ (A,B,C): AB \subseteq C\}. 
\]  
(2) The closed subgroups of $\mathsf{Iso} (\mathfrak{U})$ 
form a Borel set $\mathcal{U} (\mathsf{Iso} (\mathfrak{U}))$ in 
$\mathcal{F} (\mathsf{Iso} (\mathfrak{U}))$. \\
(3) The Polish $G$-spaces form a Borel set in 
$ \mathcal{F} (\mathsf{Iso} (\mathfrak{U} ))\times \mathcal{F} (\mathfrak{U}_L )$  
and the closed $G$-subspaces of $(\mathfrak{U}_L ,{\bf t})$ form a Borel set in 
$ \mathcal{F} (\mathsf{Iso} (\mathfrak{U} ))\times\mathcal{F} (\mathfrak{U}_L ,{\bf t})$. 
\end{prop}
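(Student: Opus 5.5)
The plan is to reduce everything to the Kuratowski--Ryll-Nardzewski selectors $s_n$ and to countable bases. The key tool is the standard fact: for closed $A,B$ in a Polish space $\mathcal{X}$ with a countable basis $\{U_k\}$, we have $A\subseteq B$ iff for every $n$ and every $k$, $s_n(A)\in U_k$ implies $B\cap \overline{U_k}\neq\emptyset$. It is cleaner to state this via a compatible metric: $A\subseteq B$ iff $\forall n\,\forall q\in\mathbb{Q}^+\ (A\neq\emptyset \rightarrow B\cap B(s_n(A),q)\neq\emptyset)$. Here I use that $A=\overline{\{s_n(A)\}}$ and that $B$ is closed. The map $F\mapsto d(x,F)$ is Borel on $\mathcal{F}(\mathcal{X})$ jointly with $x$, since $d(x,F)=\inf_m d(x,s_m(F))$. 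Hence $A\subseteq B$ iff $\forall n\ d(s_n(A),B)=0$, which is a Borel condition. This handles $\{(A,B):A\subseteq B\}$ in $\mathcal{F}(\mathsf{Iso}(\mathfrak{U}))$ and in $\mathcal{F}(\mathfrak{U}_L)$.

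For $(\mathfrak{U}_L,{\bf t})$ I use that ${\bf t}$ is Polish (Definition \ref{nto}(a)) and fix a compatible complete metric $d_{\bf t}$. The Effros space $\mathcal{F}(\mathfrak{U}_L,{\bf t})$ is then a standard Borel space with its own selectors, and the same argument applies verbatim.

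For $AB\subseteq C$ in $\mathcal{F}(\mathsf{Iso}(\mathfrak{U}))^3$, the relation is read as products of group elements. Continuity of multiplication gives $\overline{AB}=\overline{\{s_n(A)s_m(B)\}}$, and $C$ is closed, so $AB\subseteq C$ iff $\forall n,m\ \rho(s_n(A)s_m(B),C)=0$. For the action version, with $A\subseteq\mathsf{Iso}(\mathfrak{U})$ and $B,C\subseteq \mathfrak{U}_L$, I use $\forall n,m\ d(s_n(A)\cdot s_m(B),C)=0$. This is Borel because the action map is continuous, so its composition with the Borel selectors is Borel. In the ${\bf t}$ case I use continuity of the action with respect to ${\bf t}$, again from Definition \ref{nto}(a). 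Empty sets are treated separately by the Borel condition $A=\emptyset$, i.e.\ $A\notin \mathcal{C}_{\mathcal{X}}$.

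For (2), $A$ is a closed subgroup iff three conditions hold: $A\neq\emptyset$; $AA\subseteq A$, which is Borel by (1); and $A^{-1}\subseteq A$. The last condition becomes $\forall n\ \rho(s_n(A)^{-1},A)=0$, which is Borel because inversion is continuous. The identity then belongs to $A$ automatically.

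For (3), a pair $(G,X)$ is a Polish $G$-space in the intended sense iff $G\in\mathcal{U}(\mathsf{Iso}(\mathfrak{U}))$ and $GX\subseteq X$. Both are Borel by (1) and (2). Closedness of $X$ is built in, and then $X$ is Polish and the restricted action is continuous. The same reasoning works in $\mathcal{F}(\mathfrak{U}_L,{\bf t})$.

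The only delicate point I foresee is making sure the Borel structures are the right ones. One must check that joint Borelness of $(x,F)\mapsto d(x,F)$ and of the selectors is with respect to the Effros structure of the topology actually used. For ${\bf t}$ this is why Polishness of ${\bf t}$ and continuity of the action in ${\bf t}$ are needed. I would invoke these explicitly rather than comparing with $\tau$.
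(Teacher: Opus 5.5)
Your proof is correct. The paper does not actually prove (1) and (2): it cites Lemmas 2.4--2.5 of \cite{KNT}, which are proved there for $S_\infty$ with a remark that they hold for general Polish groups, together with Section 3.2 of \cite{RZ}. It then gets (3) from (1) and (2) exactly as you do, by intersecting ``$G$ is a closed subgroup'' with ``$GX\subseteq X$''. Your selector/distance-function argument is a self-contained substitute for those citations, and it brings two gains. First, it uses nothing specific to $S_\infty$, so it applies verbatim to $\mathsf{Iso}(\mathfrak{U})$, $\mathfrak{U}_L$ and $(\mathfrak{U}_L,{\bf t})$. Second, it makes explicit what the ${\bf t}$-versions need, which the paper leaves implicit: ${\bf t}$ must be Polish and the action ${\bf t}$-continuous (Definition \ref{nto}(a)). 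These are what give $\mathcal{F}(\mathfrak{U}_L,{\bf t})$ its own Kuratowski--Ryll-Nardzewski selectors and make $(A,B)\mapsto s_n(A)\cdot s_m(B)$ Borel.

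One small correction: drop your first formulation, which uses $B\cap\overline{U_k}\neq\emptyset$. The equivalence itself is true. But meeting a fixed closed set is not in general an Effros-Borel condition on $B$; in Baire space it can be $\Sigma^1_1$-complete, and $\mathsf{Iso}(\mathfrak{U})$ is not $\sigma$-compact. Use instead the basic open sets themselves: $A\subseteq B$ iff $\forall k\,(A\in\mathcal{C}_{U_k}\rightarrow B\in\mathcal{C}_{U_k})$, which is Borel directly from the generators. Alternatively, keep only the metric version $\forall n\ d(s_n(A),B)=0$ that you actually rely on. It involves only open balls, via $d(x,F)=\inf_m d(x,s_m(F))$, and is fine.
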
 

{\em Proof.} Statement (2) is well-known: see Section 3.2 of \cite{RZ}.  
Furthermore, statements (1) and (2) are variants of Lemmas 2.4 and 2.5 from \cite{KNT} which were proved for  $S_{\infty}$. 
It is also  mentioned in \cite{KNT} that they hold in general. 
Statement (3) follows  from (1) and (2). 
$\Box$ 

\bigskip 

We say that an $\mathsf{Iso} (\mathfrak{U})$-invariant set $X\in \mathcal{F} (\mathfrak{U}_L ,{\bf t})$ is {\em indecomposable $\mathsf{Iso} (\mathfrak{U})$-invariant} if for any $\mathsf{Iso} (\mathfrak{U})$-invariant $\phi \in \mathcal{B}_{\mathcal{L}}$ and any $r \in [0,1]$ the set $X$ is a subset of $(\phi )_{<r}$ or its complement. 
Clearly this corresponds to the notion of complete theories.  
Note that by Proposition \ref{elem-Borel} the pairs $(\mathsf{Iso} (\mathfrak{U}), X)$ with $\mathsf{Iso} (\mathfrak{U})$-invariant $X$, 
form a Borel family. 
The following statement is the starting point of our analysis. 

\begin{prop} \label{complete} 
The set of indecomposable $G$-invariant members of 
$\mathcal{F} (\mathfrak{U}_L ,{\bf t})$ is Borel. 
\end{prop}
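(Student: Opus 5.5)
We plan to write the set in question as a countable Boolean combination of sets that are already known to be Borel. By Proposition \ref{elem-Borel}(3), the pairs $(G,X)\in \mathcal{F}(\mathsf{Iso}(\mathfrak{U}))\times\mathcal{F}(\mathfrak{U}_L,{\bf t})$ with $X$ a closed $G$-invariant subset form a Borel set. It therefore suffices to show that, inside this set, the indecomposability condition is Borel. By definition, indecomposability quantifies over the pairs $(\phi,r)$ with $\phi\in\mathcal{B}_{\mathcal{L}}$ invariant and $r\in[0,1]$. The first reduction is to restrict $r$ to rationals. If $X\subseteq(\phi)_{<r}$ or $X\cap(\phi)_{<r}=\emptyset$ holds for all rational $r$, then for an arbitrary real $r$ we use $(\phi)_{<r}=\bigcup_{q<r,\,q\in\mathbb{Q}}(\phi)_{<q}$. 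If some $q<r$ has $X\subseteq (\phi)_{<q}$, we are done. Otherwise $X$ misses every $(\phi)_{<q}$ with $q<r$, and hence misses $(\phi)_{<r}$. So we are left with a countable intersection over $(\phi,q)\in\mathcal{B}(\mathbb{Q})$.

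The quantifier ``for every invariant $\phi$'' needs care when $G$ varies. We treat it as an implication: for each fixed $\phi$, consider the set $I_\phi$ of those $G$ such that $\phi$ is $G$-invariant, meaning $\phi(g x)=\phi(x)$ for all $g\in G$ and all $x$. We plan to show that $I_\phi$ is Borel in $\mathcal{F}(\mathsf{Iso}(\mathfrak{U}))$ using the Kuratowski--Ryll-Nardzewski selectors $s_n$. Since $\phi$ is ${\bf t}$-continuous and the action is continuous, $\phi$ is $G$-invariant iff $\phi(s_n(G)x)=\phi(x)$ for all $n$ and all $x$ in a countable ${\bf t}$-dense set $D\subseteq\mathfrak{U}_L$. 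The latter condition is a countable intersection of sets of the form $\{G:\phi(s_n(G)x)=\phi(x)\}$. Each of these is Borel, because $s_n$ is Borel and $g\mapsto\phi(gx)$ is continuous. If we only need $G=\mathsf{Iso}(\mathfrak{U})$, as in the preceding paragraph, this step is trivial: $I_\phi$ is either everything or empty.

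Next, for fixed $(\phi,q)$ the set $U=(\phi)_{<q}$ is ${\bf t}$-open. The set $\{X: X\cap U\neq\emptyset\}=\mathcal{C}_U$ is Borel by the definition of the Effros structure, so the condition $X\cap U=\emptyset$ is Borel. The condition $X\subseteq U$ is the statement $X\cap(\mathfrak{U}_L\setminus U)=\emptyset$. This is harder, since $\mathfrak{U}_L\setminus U=(\phi)_{\ge q}$ is closed and not open. We expect this to be the main obstacle. Our first approach is to write $(\phi)_{\ge q}=\bigcap_k(\phi)_{>q-1/k}$ and use the selectors: for nonempty $X$, $X\subseteq(\phi)_{<q}$ fails iff some $s_n(X)$ satisfies $\phi(s_n(X))\ge q$. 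This holds provided $\phi$ is ${\bf t}$-continuous (the members of $\mathcal{B}_{\mathcal{L}}$ are grey subsets defining ${\bf t}$, and we may pass to a basis of clopen grey sets, or use that $\phi$ is $\Delta$-measurable and both cones are open). More safely, we replace ``$X\subseteq(\phi)_{<q}$ or $X\cap(\phi)_{<q}=\emptyset$'' by the equivalent rational formulation ``$X\cap(\phi)_{<q}\neq\emptyset$ implies $X\cap(\phi)_{>q'}=\emptyset$ for all rational $q'>q$'', together with the analogous implication with the roles of the two cones exchanged. Both cones $(\phi)_{<q}$ and $(\phi)_{>q'}$ are ${\bf t}$-open, because the formulas are clopen grey subsets in ${\bf t}$. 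Every condition is then of the form $\mathcal{C}_U$ or its complement. Since $X$ is ${\bf t}$-closed and invariant, the value set $\phi(X)$ is a single point exactly when all these implications hold.

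Putting the pieces together, the set of indecomposable invariant pairs is the intersection of the Borel set from Proposition \ref{elem-Borel}(3) with
\[
\bigcap_{\phi\in\mathcal{B}_{\mathcal{L}}}\ \bigcap_{q<q'\in\mathbb{Q}}\Big( (\neg I_\phi)\ \cup\ \neg\mathcal{C}_{(\phi)_{<q}}\ \cup\ \neg\mathcal{C}_{(\phi)_{>q'}}\Big),
\]
which is a countable intersection of Borel sets and hence Borel.
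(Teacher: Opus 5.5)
Your final argument is correct, but it differs from the paper's at the key step. The paper writes indecomposability as $(\forall B\in\mathcal{B}_{\mathcal{L}}(\mathbb{Q}))(X\subseteq B\vee X\cap B=\emptyset)$ and then simply cites Proposition \ref{elem-Borel}. You instead rewrite it as ``every invariant $\phi$ is constant on $X$'': for no rationals $q<q'$ does $X$ meet both ${\bf t}$-open cones $(\phi)_{<q}$ and $(\phi)_{>q'}$. With that formulation, only the Effros generators $\mathcal{C}_U$ and their complements occur.

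What your route buys is robustness. Proposition \ref{elem-Borel} is about inclusions between \emph{closed} sets, whereas a cone $(\phi)_{<r}$ is only ${\bf t}$-open. In a general Polish space, the set $\{X\in\mathcal{F}: X\subseteq U\}$ for a fixed open $U$ need not be Borel. So your formulation supplies what the paper's one-line proof leaves implicit, at the cost of a little more bookkeeping.

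Your sets $I_\phi$ also make two things explicit. First, they restrict the quantifier to invariant $\phi$; the paper's displayed formula ranges over all of $\mathcal{B}_{\mathcal{L}}(\mathbb{Q})$, including cones with parameters. Second, they handle a varying closed subgroup $G$. For $G=\mathsf{Iso}(\mathfrak{U})$ this is just a fixed countable subfamily, as you note.

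Two corrections are needed.

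\begin{itemize}
\item Delete your ``first approach''. Even for ${\bf t}$-continuous $\phi$, the closed set $X\cap(\phi)_{\ge q}$ may be nowhere dense in $X$. Then every selector $s_n(X)$ can miss it even though $X\not\subseteq(\phi)_{<q}$.
\item Your implication is not equivalent to ``$X\subseteq(\phi)_{<q}$ or $X\cap(\phi)_{<q}=\emptyset$'' for a single fixed $q$; take $\phi(X)=\{0,q\}$. Only the conjunction over all rational pairs $q<q'$ is equivalent, as your last sentence correctly says. Consequently the reduction from real to rational $r$ is unnecessary, and so is the ``roles exchanged'' implication, which is just the contrapositive.
\end{itemize}
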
 

{\em Proof.} 
By the definition of indecomposable members, for a $G$-invariant $X$ 
\[ 
X \mbox{ is indecomposable } \Leftrightarrow 
(\forall B\in \mathcal{B}_{\mathcal{L}}(\mathbb{Q}))
(X \subseteq B \vee X\cap B = \emptyset ). 
\]  
By Proposition \ref{elem-Borel} this is a definition of a Borel 
family. 
$\Box$ 
\bigskip 

\begin{remark} \label{Borel4X}
{\em This proposition and the notions used in it have natural generalizations in the general case, when we just have a $(G, \mathcal{R})$-space $\mathcal{X}$, a good basis $\mathcal{B}$, and analyze  {\bf t}-closed subsets $Y\subseteq \mathcal{X}$. 
As above indecomposable $G$-invariant members of $\mathcal{F} (\mathcal{X} ,{\bf t})$ form a Borel family. 
To see this note, that the condition $GY =Y$ can be expressed in a Borel way even in these general circumstances. 
Indeed, it is easy to see that for a Kuratowski-Ryll-Nardzewski's selector $s_l$ the condition $s_l (X_1 ) \not\in X_2$ defines a Borel subset of 
$\mathcal{F}(\mathcal{X})\times \mathcal{F}(\mathcal{X})$.  
Using this one can easily express the statement that $Y$ and $GY$ are dense in each other. }
\end{remark}

\subsection{Model companions}

In model theory one of the definitions of model completeness  states that any formula is equivalent to an existential one (or a universal one). 
A theory $T_1$ is a model companion of $T$ 
if $T_1$ is model complete and every model of $T$ embeds into a model of $T_1$ and vice versa.

The following definition is a variant of Definition 1.3 from \cite{MI05} 
and Proposition 1.4 of \cite{IMI-APAL}. 
 
\begin{definition} 
Given a $(G, \mathcal{R})$-space $(\mathcal{X}, \tau)$ (as in Section 2.2), and an $\mathcal{R}$-good basis $\mathcal{B}$,  
assume that  $X_0$ and $X_1$ are  {\bf t}-closed invariant subsets of $\mathcal{X}$.  
We say that $X_1$ is a {\sl companion} of $X_0$ if the $\tau$-closures of 
$X_0$ and $X_1$ coincide and every element of $\mathcal{B}$  
is $\tau$-clopen on $X_1$. 
\end{definition} 
To see that this definition is a version of the classical definition, the family $\mathcal{B}(\mathbb{Q})$ and 
the subfamily of its $\tau$-closed members should be viewed as the set of all formulas and the subset of all universal ones. 
In particular, the definition says that $X_0$ and $X_1$ cannot be distinguished by universal sentences and in $X_1$ every formula is equivalent to a universal one. 

We now consider the case when $\mathcal{X} = \mathfrak{U}_L$ and $G= \mathsf{Iso}(\mathfrak{U})$ are taken together with the nice basis 
$\mathcal{B}_{\mathcal{L}}$ defined in Section 2.3. 
 
\begin{thm} \label{comp}
The set of pairs $(X_0, X_1 )$ of 
$\, \mathsf{Iso} (\mathfrak{U} )$-invariant members of 
$\mathcal{F} (\mathfrak{U}_L ,{\bf t})$ 
with the condition that $X_1$ is a companion of $X_0$  is Borel. 
\end{thm}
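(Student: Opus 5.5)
The plan is to split the definition of a companion into its two clauses and show that each clause defines a Borel set of pairs. We work inside the Borel set of pairs $(X_0,X_1)$ of $\mathsf{Iso}(\mathfrak{U})$-invariant members of $\mathcal{F}(\mathfrak{U}_L,{\bf t})$ supplied by Proposition \ref{elem-Borel}(3). Throughout, we fix the Kuratowski--Ryll-Nardzewski selectors $s_n$ for $\mathcal{F}(\mathfrak{U}_L,{\bf t})$. Since ${\bf t}$ is finer than $\tau$, every ${\bf t}$-open set is Borel for $\tau$, and conversely, so the basic sets $(\phi)_{<r}$ from $\mathcal{B}_{\mathcal{L}}(\mathbb{Q})$ and $\mathcal{B}_{o\mathcal{L}}$ are ${\bf t}$-open. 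Consequently each map $X\mapsto [s_n(X)\in U]$, for such a basic set $U$, is Borel.

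\emph{First clause: the $\tau$-closures agree.} By the Note at the end of Section 4.1, $\mathcal{B}_{o\mathcal{L}}$ is a basis of $\tau$. Hence $\mathrm{cl}_\tau(X_0)=\mathrm{cl}_\tau(X_1)$ holds iff, for every $U\in\mathcal{B}_{o\mathcal{L}}$,
\[
X_0\cap U\neq\emptyset \Leftrightarrow X_1\cap U\neq\emptyset .
\]
Each such $U$ is ${\bf t}$-open, so $X\cap U\neq\emptyset$ is a basic Effros set $\mathcal{C}_U$ in $\mathcal{F}(\mathfrak{U}_L,{\bf t})$. A countable conjunction of such equivalences is therefore Borel.

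\emph{Second clause: every element of $\mathcal{B}_{\mathcal{L}}$ is $\tau$-clopen on $X_1$.} For each $\phi\in\mathcal{B}_{\mathcal{L}}$ and rational $r$, we express that $(\phi)_{<r}\cap X_1$ and $(\phi)_{\ge r}\cap X_1$ (or $(\phi)_{>r}$, depending on which cones the clopenness of a grey set refers to) are relatively $\tau$-open in $X_1$. Relative $\tau$-openness of $(\phi)_{<r}\cap X_1$ means: for every $x\in X_1$ with $\phi(x)<r$ there is $V\in\mathcal{B}_{o\mathcal{L}}$ with $x\in V$ and $V\cap X_1\subseteq(\phi)_{<r}$. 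To quantify over points countably, I would try to restrict $x$ to the dense set $\{s_n(X_1)\}$, replacing the condition by a uniform statement: for every $n$ with $\phi(s_n(X_1))<r'$, where $r'<r$ is rational, there is $V\in\mathcal{B}_{o\mathcal{L}}$ containing $s_n(X_1)$ such that no $m$ has $s_m(X_1)\in V$ and $\phi(s_m(X_1))>r''$, for rationals $r'<r''<r$. The clause $s_m(X_1)\in V\wedge\phi(s_m(X_1))>r''$ is tested against a ${\bf t}$-open set, namely $V\cap(\phi)_{>r''}$, where $(\phi)_{>r''}$ is ${\bf t}$-open because $\mathcal{B}$ contains the constants and is closed under connectives (so $1\dot-\phi$ lies in the fragment). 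For this reason density of the selectors in ${\bf t}$ suffices to detect nonempty intersections with $X_1$. All quantifiers are then over countable sets and all atomic conditions are Borel in $X_1$, so the set is Borel.

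The main obstacle is the equivalence in the second clause: passing from ``every point of $X_1$'' to ``the selector points of $X_1$''. The selector points are only ${\bf t}$-dense, while the openness condition is about $\tau$-neighbourhoods. The rational slack $r'<r''<r$ is meant to absorb the boundary. I would first prove that clopenness of all $\phi$ on $X_1$ is equivalent to the following: for all rationals $r'<r''$ and all $\phi$, the sets $\mathrm{cl}_\tau(X_1\cap(\phi)_{<r'})$ and $\mathrm{cl}_\tau(X_1\cap(\phi)_{>r''})$ are disjoint within $X_1$, together with the symmetric version. Since $\mathcal{B}_{o\mathcal{L}}$ is a countable $\tau$-basis, disjointness of these relative closures can be witnessed by countably many basic sets, and this is where the selectors suffice. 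If this equivalence fails in general, I would fall back on using that $\mathcal{B}_{\mathcal{L}}$ is closed under the logical connectives, reducing to cones $(\phi)_{<r}$ only.
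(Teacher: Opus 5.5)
Your first clause is correct and is essentially what the paper does. $\mathcal{B}_{o\mathcal{L}}(\mathbb{Q})$ is a countable $\tau$-basis whose members are ${\bf t}$-open, so equality of the $\tau$-closures is a countable Boolean combination of Effros generators $\{X : X\cap U\neq\emptyset\}$. The paper writes the same condition using selectors.

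\textbf{The gap is in the second clause.} Since $V\cap(\phi)_{>r''}$ is ${\bf t}$-open, your selector condition says exactly this: the selector points $s_n(X_1)$ with $\phi<r'$ lie outside $C=\mathrm{cl}_\tau(X_1\cap(\phi)_{>r''})$. It does not follow that every $x\in X_1$ with $\phi(x)<r'$ lies outside $C$. Such an $x$ is a ${\bf t}$-limit of these selector points, but the witnessing neighbourhoods $V_n\ni s_n(X_1)$ may shrink and never contain $x$. Put differently, $X_1\cap(\phi)_{<r'}\cap C$ can be a nonempty nowhere dense subset of $X_1$ that no $s_n(X_1)$ hits.

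Your reformulation via disjoint relative closures is a correct equivalent of relative $\tau$-continuity of $\phi$ on $X_1$, but it has the same defect. It asserts that $X_1$ misses the closed set $\mathrm{cl}_\tau(X_1\cap(\phi)_{<r'})\cap\mathrm{cl}_\tau(X_1\cap(\phi)_{>r''})$. Equivalently, $X_1$ is covered by the basic sets that miss one of the two traces. A dense sequence in $X_1$ can certify that $X_1$ meets an open set, but never that it avoids a closed one. For instance, with $K=\{x\in\omega^\omega : \forall i\; x(i)\neq 0\}$, the family $\{F\in\mathcal{F}(\omega^\omega) : F\cap K\neq\emptyset\}$ is $\Sigma^1_1$-complete, by coding ill-founded trees. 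So under your pointwise reading the clause is only seen to be $\Pi^1_1$. The fallback to cones $(\phi)_{<r}$ does not address this.

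\textbf{The missing idea} is the one the paper uses: do not read ``$\tau$-clopen on $X_1$'' as pointwise relative continuity of $\phi$ on $X_1$. Read it uniformly, in the spirit of ``every formula is equivalent modulo the theory to a universal one'': for every $B_l\in\mathcal{B}_{\mathcal{L}}(\mathbb{Q})$ there are $A_k, A_m\in\mathcal{B}_{o\mathcal{L}}(\mathbb{Q})$ with $B_l\cap X_1=A_k\cap X_1$ and $X_1\setminus B_l=A_m\cap X_1$. Then every quantifier ranges over a countable family, and no quantification over points of $X_1$ remains. What is left are inclusions between traces on $X_1$ of fixed sets, which the paper checks along $s_j(X_1)$.

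Be aware, however, that a check $\forall j\,(s_j(X_1)\in A\rightarrow s_j(X_1)\in B)$ is equivalent to $A\cap X_1\subseteq B$ only when $A$ is ${\bf t}$-open and $B$ is ${\bf t}$-closed. This holds automatically in the discrete setting, where cones are clopen. With continuous cones, for example for $X_1\setminus B_l\subseteq A_m$, the objection above applies again, so that step of the paper's argument also needs care.
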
 

{\em Proof.} 
Applying Proposition \ref{elem-Borel} we consider pairs $(X_0 ,X_1 )$  
of ${\bf t}$-closed invariant subsets 
as elements of the corresponding Borel set of triples 
$(\mathsf{Iso} (\mathfrak{U} ), X_0 , X_1 )$.  
Using Kuratowski-Ryll-Nardzewski selectors the condition that $\tau$-closures 
of $X_0$ and $X_1$ are the same can be written as follows:  
\[ 
(\forall A_k \in \mathcal{B}_{o\mathcal{L}}(\mathbb{Q}))\forall i \exists j \exists l ( 
s_i (X_1 ) \in A_k \rightarrow s_j (X_0 ) \in A_k ) \wedge 
( s_i (X_0 ) \in A_k \rightarrow s_l (X_1 ) \in A_k ). 
\]  
Now note, that given two ${\bf t}$-closed $A$ and $B$ 
the condition $A\cap X_1 \subseteq B\cap X_1$ for $X_1 \in \mathcal{F}(\mathcal{X})$  
is equivalent to the formula: 
\[  
\forall j (s_j (X_1 )\in A \rightarrow s_j (X_1 ) \in B ). 
\]  
In particular, this condition is Borel. 
We can now express that every element of $\mathcal{B}_{\mathcal{L}}(\mathbb{Q})$ 
is $\tau$-clopen on $X_1$ as follows: 
\[  
 (\forall B_l \in \mathcal{B}_{\mathcal{L}}(\mathbb{Q})) 
(\exists A_k \in \mathcal{B}_{o\mathcal{L}}(\mathbb{Q})) 
(\exists A_m \in \mathcal{B}_{o\mathcal{L}}(\mathbb{Q})) 
( (B_l \cap X_1\subseteq A_k \cap X_1 ) \wedge 
\] 
\[ 
(A_k \cap X_1\subseteq B_l \cap X_1)) \wedge 
(A_m \cap X_1 = X_1 \setminus B_l ). 
\] 
For the final equality of this formula note that $\forall i (s_i (X_1 ) \not\in B_l \rightarrow s_i (X_1) \in A_m )$ means 
$X_1 \setminus B_l \subseteq A_m \cap X_1$.  
$\Box$ 

\bigskip 

\begin{remark} 
{\em The theorem above is a counterpart of the statement that identifying theories of a language $L$ with closed subsets of the compact space of complete $L$-theories, 
the binary relation to be a model companion is Borel. 
Although the author has not found it in the literature, it is an easy exercise. }
\end{remark}

\subsection{A Borel substitute for stability} 
Preserving the notation of Section 4.1 we give a very general definition of stability. 
It admits a description of stable ${\bf t}$-closed sets as a Borel family. 
We will also see that under the circumstances  of  the beginning of Section 3 (i.e. when 
a {\bf t}-closed $Y \subseteq \mathcal{M}_{L}$ is considered) 
it is equivalent to  Definition \ref{preunst} under the additional assumption that $\mathcal{M}$ is separably categorical.

We remind the reader that in the general case of Section 4.1 we have a $(G, \mathcal{R})$-space $\mathcal{X}$, 
a dense subgroup $G_0 \le G$, a good basis $\mathcal{B}$, a grey subset $\phi \in \mathcal{B}$ and 
a {\bf t}-closed subset $Y\subseteq \mathcal{X}$. 
Instead of $\bar{s}$ and $\bar{s}'$ grey subgroups $H, H'\in \mathcal{R}$ are given such that $\phi$ 
is invariant with  respect to $\mathsf{max} (H, H' )$. 

\begin{definition} \label{unst} 
A ${\bf t}$-closed set $Y$ is {\sl  unstable relatively to} $(\mathcal{X},\mathcal{B})$ if there is a grey set $\phi \in \mathcal{B}$ such that for every $n$ and every $\varepsilon\in \mathbb{Q} \cap [0,1]$ there exist 
\[ 
g_{1,1}, g_{1,2}, \ldots ,g_{1,n}, g_{2,1}, \ldots ,g_{n-1,n}, g_{1,n},\ldots ,g_{n,n} \in G_0
\]   
such that $H(g^{-1}_{i,j} g_{i,l})\le \varepsilon$ and $H'(g^{-1}_{i,j}g_{l,j})\le \varepsilon$ for all $i,j,l\in \{ 1,\ldots ,n\}$  and  
\[ 
Y \cap \bigcap \{ (g_{i,j} \phi )_{\le \varepsilon} : i < j \}  \cap  \bigcap \{ (g_{i,j} \phi )_{\ge 1 - \varepsilon} : j \le i \} \not=\emptyset . 
\] 
\end{definition}  

We now show that this definition is a counterpart of Definition \ref{preunst}. 
W.l.o.g. assume that $\phi$ and all $L$-symbols below are continuous with respect to $\mathsf{id}$.

\begin{prop} \label{preunst-1-2-3} 
Assume that $\mathcal{M}$ is a separably categorical first-order structure, $N$ is approximating in $\mathcal{M}$ and 
$Y$ is defined by a first-order $L$-theory. 
Then Definition \ref{unst} is equivalent to existence of a continuous first-order formula $\phi (\bar{s},\bar{s}')$ with parameters from $N$ such that Definition \ref{preunst} holds.  
\end{prop}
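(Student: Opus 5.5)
Throughout, the grey set $\phi$ in Definition \ref{unst} is taken to be $\phi(\bar{s},\bar{s}')$ with $\bar{s},\bar{s}'\in N$, and $H=H_{q,\bar{s}}$, $H'=H_{q,\bar{s}'}$. With these choices the proposition reduces to translating between the translates $g\phi$ and the parameter tuples. The key identity is
\[
(g\phi(\bar{s},\bar{s}'))(M)=\phi^{g^{-1}M}(\bar{s},\bar{s}')=\phi^{M}(g\bar{s},g\bar{s}')
\]
(up to the convention for the action). Hence a family $g_{i,j}\in G_0$ gives parameters $g_{i,j}\bar{s},\,g_{i,j}\bar{s}'\in N$.

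\emph{Direction Definition \ref{unst} $\Rightarrow$ Definition \ref{preunst}.} Given the $g_{i,j}$, I set $\bar{s}_i:=g_{i,1}\bar{s}$ and $\bar{s}'_j:=g_{1,j}\bar{s}'$. The condition $H(g^{-1}_{i,j}g_{i,l})\le\varepsilon$ says $d(g_{i,j}\bar{s},g_{i,l}\bar{s})\le\varepsilon/q$, so $g_{i,j}\bar{s}$ is $\varepsilon/q$-close to $\bar{s}_i$ for every $j$. Likewise $H'$ gives that $g_{i,j}\bar{s}'$ is $\varepsilon/q$-close to $\bar{s}'_j$. Since $\phi$ is $\mathsf{id}$-continuous, $\phi^M(\bar{s}_i,\bar{s}'_j)$ differs from $(g_{i,j}\phi)(M)$ by at most about $2\varepsilon/q$. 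The witnessing $M\in Y$ therefore realizes the pattern $\phi\le\varepsilon'$ for $i<j$ and $\phi\ge 1-\varepsilon'$ for $j\le i$, and any choice $0<r_1<r_2<1$ works once $\varepsilon$ is small. The type condition $d(tp(\bar{s}_i\bar{s}'_j),tp(\bar{s}\bar{s}'))\le\varepsilon'$ holds because $g_{i,j}$ is an automorphism: $tp(g_{i,j}\bar{s}\,g_{i,j}\bar{s}')=tp(\bar{s}\bar{s}')$, and the tuple is $\varepsilon'$-close to $\bar{s}_i\bar{s}'_j$.

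\emph{Direction Definition \ref{preunst} $\Rightarrow$ Definition \ref{unst}.} This is the main obstacle, for two reasons. First, Definition \ref{preunst} only gives $\phi\le r_1+\varepsilon$ versus $\phi\ge r_2-\varepsilon$, while Definition \ref{unst} demands the values $\le\varepsilon$ versus $\ge 1-\varepsilon$. I would fix this by replacing $\phi$ with a continuous rescaling $\phi^*=\mathsf{min}(1,\mathsf{max}(0,k(\phi\dot{-}r_1)))$ for rational $k>1/(r_2-r_1)$. This is still a first-order formula with parameters in $N$, hence lies in $\mathcal{B}_{\mathcal{L}}$; it has linear modulus, and invariance with respect to $H_{q,\cdot}$ holds for larger $q$. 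Second, I must produce the $g_{i,j}$ from the $\bar{s}_i,\bar{s}'_j$. Here separable categoricity enters: $\mathsf{Aut}(\mathcal{M})$ is approximately oligomorphic, and $d$-closeness of types equals closeness of orbits. Since $d(tp(\bar{s}_i\bar{s}'_j),tp(\bar{s}\bar{s}'))\le\varepsilon$, there are $h_{i,j}\in\mathsf{Aut}(\mathcal{M})$ with $h_{i,j}(\bar{s}\bar{s}')$ being $2\varepsilon$-close to $\bar{s}_i\bar{s}'_j$. By density of $G_0$ in $\mathsf{Aut}(N)$, which is dense in $\mathsf{Aut}(\mathcal{M})$, these $h_{i,j}$ can be replaced by $g_{i,j}\in G_0$ with the same estimate up to $3\varepsilon$. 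Then $d(g_{i,j}\bar{s},g_{i,l}\bar{s})\le 6\varepsilon$, since both are close to $\bar{s}_i$, and this yields the $H$- and $H'$-bounds after scaling $\varepsilon$ by $q$. The pattern of $g_{i,j}\phi^*$ on the same $M\in Y$ follows from continuity, as in the first direction.

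Finally, I would check that the $\varepsilon$ bookkeeping is uniform in $n$: all losses are linear in $\varepsilon$ with constants depending only on $q$ and $k$. Running Definition \ref{preunst} with $\varepsilon/C$ therefore gives Definition \ref{unst} with $\varepsilon$. I expect to use the hypothesis that $Y$ is given by a first-order theory only to guarantee that $\phi^*$ and the translates are basic grey sets of $\mathcal{B}_{\mathcal{L}}$, so that the two notions refer to the same family $\mathcal{B}$.
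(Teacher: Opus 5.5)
Your proposal is correct and follows the paper's proof essentially step for step. In one direction you take $\bar s_i,\bar s'_j$ as $G_0$-images of $\bar s,\bar s'$ and use the $H,H'$ bounds plus $\mathsf{id}$-continuity (the paper uses the diagonal elements $g_{i,i}$ where you use $g_{i,1}$ and $g_{1,j}$). In the other direction you rescale $\phi$ to a normalization of $\phi\dot{-}r_1$ and obtain the $g_{i,j}$ from separable categoricity/approximate oligomorphicity; your explicit density argument moving these automorphisms into $G_0$ fills in a detail the paper leaves implicit.
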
 

{\em Proof.} 
Assume that Definition \ref{unst} is satisfied by a grey subset $\phi (\bar{s},\bar{s}')$ invariant with respect to maximum of $H=H_{q, \bar{s}}$ and $H'=H_{q',\bar{s}'}$. 
Assume for simplicity that $q=q'=1$. 

Let $r_1 = \frac{1}{4}$ and $r_2 = \frac{3}{4}$. 
Take any $n$ and $\varepsilon <\frac{1}{8}$ and find the corresponding 
\[ 
g_{1,1}, g_{1,2}, \ldots ,g_{1,n}, g_{2,1}, \ldots ,g_{n-1,n}, g_{1,n},\ldots ,g_{n,n} \in G_0 . 
\]    
The tuples 
$\bar{s}_1 ,\bar{s}'_1 ,\ldots ,\bar{s}_n ,\bar{s}'_n$ as in Definition \ref{preunst} are taken as images of $\bar{s}$, $\bar{s}'$ with respect to $g_{1,1}, g_{2,2}, g_{3,3}, \ldots , g_{n-1,n-1}, g_{n,n} \in G_0$. 
Note that for any $i$ and $j$ we have 
$d(\bar{s}_i \bar{s}'_j, g_{i,j} (\bar{s}\bar{s}')) \le \varepsilon$,   i.e. 
\[ 
d(tp^{\mathcal{M}}(\bar{s}_i \bar{s}'_j), tp^{\mathcal{M}}(\bar{s} \bar{s}'))\le \varepsilon \mbox{ , and } 
\] 
\[ 
| g_{i,j} \phi (\bar{s},\bar{s}') - \phi (\bar{s}_i ,\bar{s}'_j)| \le \varepsilon . 
\]  
In particular we now see that the structure from $Y$ satisfying the final inequality of Definition \ref{unst} also satisfies the  corresponding inequality of Definition \ref{preunst}. 

To prove the contrary direction assume that 
there exists a continuous first-order formula $\phi' (\bar{s},\bar{s}')$ with parameters from $N$ such that Definition \ref{preunst} holds. 
Fix $r_1$ and $r_2$. 
Let $\phi$ be $\frac{\phi' (\bar{x},\bar{x}')\dot{-} r_1}{r_2 - r_1}$.  
Now take sufficiently small $\varepsilon$ and sufficiently large $n$. 
Applying Definition \ref{preunst} to $\phi'$, $n$ and $\varepsilon$ for every $i,j$ let $g_{i,j}$ be an isometry such that up to $\varepsilon$ it maps $\bar{s}$ to $\bar{s}_i$ and $\bar{s}'$ to $\bar{s}'_j$ respectively. 
In order to find these isometries use approximate oligomorphicity,  see Theorem 12.10 of \cite{BYBHU}. 
It is now easy to see that Definition \ref{unst} holds for $\phi$.  
$\Box$ 

\bigskip

We now say that the ${\bf t}$-closed $Y$ is {\em stable relatively to} $(\mathcal{X},\mathcal{B})$ if it is not unstable relatively to $(\mathcal{X},\mathcal{B})$.

\begin{prop} 
Under the circumstances of Definition \ref{unst} the family 

$\{ Y \in \mathcal{F}(\mathcal{X},{\bf t}) \, | \, Y$ is $G$-invariant and stable relatively to $(\mathcal{X},\mathcal{B}) \, \}$ is Borel. 
\end{prop}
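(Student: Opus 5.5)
We will show that the family of $G$-invariant, relatively unstable members of $\mathcal{F}(\mathcal{X},{\bf t})$ is Borel; the stable ones are then obtained by intersecting its complement with the Borel family of $G$-invariant closed sets (Proposition \ref{elem-Borel} and Remark \ref{Borel4X}). The plan is to write Definition \ref{unst} as a countable Boolean combination of Borel conditions on $Y$. Only countable objects appear in the quantifiers: $\phi$ ranges over the countable family $\mathcal{B}$, the pair $H,H'$ ranges over $\mathcal{R}$, $n$ ranges over $\omega$, $\varepsilon$ ranges over $\mathbb{Q}\cap[0,1]$, and the tuples $(g_{i,j})$ range over the countable set $G_0^{n^2}$. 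The side conditions $H(g^{-1}_{i,j}g_{i,l})\le\varepsilon$ and $H'(g^{-1}_{i,j}g_{l,j})\le\varepsilon$ do not depend on $Y$, so for fixed data they are simply true or false. Hence relative instability has the form
\[
\exists \phi\,\exists H,H'\,\forall n\,\forall\varepsilon\,\exists (g_{i,j})\in G_0^{n^2}\, \bigl[\mbox{side conditions} \wedge Y\cap W_{\phi,(g_{i,j}),\varepsilon}\neq\emptyset\bigr],
\]
where $W_{\phi,(g_{i,j}),\varepsilon}$ is the intersection of the finitely many cones $(g_{i,j}\phi)_{\le\varepsilon}$ ($i<j$) and $(g_{i,j}\phi)_{\ge 1-\varepsilon}$ ($j\le i$). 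It therefore suffices to show that $\{Y : Y\cap W\neq\emptyset\}$ is Borel for each such fixed $W$.

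This is the main obstacle. The cones are closed cones of grey subsets, so $W$ is not open, and the generating sets $\mathcal{C}_U$ of the Effros structure only handle open $U$. My first approach is to replace closed cones by open ones, using the freedom in $\varepsilon$. Let $W'_{\varepsilon}$ be the intersection of the open cones $(g_{i,j}\phi)_{<\varepsilon}$ and $(1-g_{i,j}\phi)_{<\varepsilon}$. Then $W'_\varepsilon\subseteq W_\varepsilon\subseteq W'_{\varepsilon'}$ for any rational $\varepsilon'>\varepsilon$. Consequently the statement ``for all $n$ and all $\varepsilon$ there exist $(g_{i,j})$ with $Y\cap W_\varepsilon\ne\emptyset$'' is equivalent to the same statement with $W'_\varepsilon$, after a monotonicity adjustment of the side conditions: with $\varepsilon$ replaced by $\varepsilon'$ there, the ``for all $\varepsilon$'' quantifier absorbs the shift.

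For an open $W'$ we then need $\{Y: Y\cap W'\neq\emptyset\}$ to be Borel. Since $\mathcal{B}$ is a grey basis of ${\bf t}$ (Definition \ref{nto}), each open cone $(g\phi)_{<r}$ is ${\bf t}$-open. Indeed, $g\phi$ is again a grey subset whose cones are $g$-translates of cones of $\phi$, and the action is ${\bf t}$-continuous. So $W'$ is ${\bf t}$-open, and $\mathcal{C}_{W'}$ is a generator of the Effros Borel structure on $\mathcal{F}(\mathcal{X},{\bf t})$. Alternatively, one can write it via Kuratowski--Ryll-Nardzewski selectors as $\exists l\,(s_l(Y)\in W')$, each condition being Borel because $s_l$ is Borel.

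If the openness of $g\phi$-cones needs care, I would instead express $g\phi$ through $\mathcal{B}$. In the logic-space case $g\phi(\bar s)=\phi(g\bar s)\in\mathcal{B}$ for $g\in G_0$, so no extra argument is needed there. Combining these steps, instability is given by countable unions and intersections of Borel sets and is therefore Borel. Intersecting with the Borel $G$-invariance condition and taking the complement yields the Borel family of stable $Y$.
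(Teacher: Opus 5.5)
Your proposal follows the paper's proof. Both reduce, using the countability of $\mathcal{B}$, $\mathcal{R}$, $G_0$ and $\mathbb{Q}\cap[0,1]$, to showing that $\{Y : Y\cap W\neq\emptyset\}$ is Borel for fixed data. Both then use the Effros generators, equivalently the Kuratowski--Ryll-Nardzewski selectors ($\exists \ell\ s_\ell(Y)\in W$).

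Where you differ, you are more careful than the paper. The selector reformulation is valid only for ${\bf t}$-open $W$, because the $s_\ell(Y)$ are merely dense in $Y$. The cones $(g_{i,j}\phi)_{\le\varepsilon}$ and $(g_{i,j}\phi)_{\ge 1-\varepsilon}$ are not open, yet the paper silently lists them as members of $\mathcal{B}(\mathbb{Q})$, which consists only of open cones $\phi_{<r}$. In a general Polish space, $\{Y : Y\cap F\neq\emptyset\}$ for a fixed closed $F$ need not even be Borel. Your passage to strict inequalities, using the slack in $\varepsilon$ and the monotonicity of the side conditions, is exactly the repair that is needed.

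Two points should be made explicit.

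First, the open version must quantify over $\varepsilon\in\mathbb{Q}\cap(0,1]$, since $W'_0=\emptyset$. Correspondingly, you must read Definition \ref{unst} with $\varepsilon>0$, as the author clearly intends (compare the proof of Proposition \ref{preunst-1-2-3}). Taken literally, the clause for $\varepsilon=0$ implies all the others by monotonicity. The definition would then collapse to exact conditions $g_{i,j}\phi\in\{0,1\}$ on closed sets, where your slack argument is unavailable.

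Second, your justification of openness covers only the lower cones $(g\phi)_{<\varepsilon}=g(\phi_{<\varepsilon})$. The other constituents of $W'_\varepsilon$ are the sets $\{x : (g\phi)(x)>1-\varepsilon\}$. These are ${\bf t}$-open only if $1-\phi$ is also a ${\bf t}$-open grey subset. For a bare good basis, $\phi$ is only ${\bf t}$-open grey, so $\phi_{>r}$ is merely $F_\sigma$.
\begin{itemize}
\item In the logic-space case this is automatic: the fragment $\mathcal{L}$ is closed under connectives, so $1-\phi\in\mathcal{B}_{\mathcal{L}}$ and $\phi$ is ${\bf t}$-continuous.
\item Under the general circumstances of Definition \ref{unst}, you should state it as an extra hypothesis, for instance that $\mathcal{B}$ is closed under $\phi\mapsto 1-\phi$. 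The paper's proof tacitly relies on this as well.
\end{itemize}
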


{\em Proof.} 
We consider ${\bf t}$-closed invariant subsets $Y$  
as elements of the corresponding Borel set of pairs  $(G, Y )$.  
We may also use Remark \ref{Borel4X}. 
Since the sets $\mathbb{Q} \cap [0,1]$, $G_0$, 
$\mathcal{B}(\mathbb{Q})$,  $\mathcal{R}(G_0 ,\mathbb{Q})$
and $\mathcal{I}nv_{\mathcal{B},\mathcal{R}}$ are countable 
it suffices to show that given 
$g_{1,1}, g_{1,2}, \ldots ,g_{1,n}, g_{2,1}, \ldots ,g_{n-1,n}, g_{1,n},\ldots ,g_{n,n} \in G_0$  and $(\phi )_{\le \varepsilon}$ and 
$(\phi )_{\ge 1 -\varepsilon} \in \mathcal{B}(\mathbb{Q})$ 
the family of all ${\bf t}$-closed $G$-invariant $Y$ with   
\[ 
Y \cap \bigcap \{ (g_{i,j} \phi )_{\le \varepsilon} : i < j \}  \cap  \bigcap \{ (g_{i,j} \phi )_{\ge 1 - \varepsilon} : j \le i \} \not=\emptyset 
\] 
is Borel. 
Note that this condition can be written as a statement of existence of a Kuratowski-Ryll-Nardzewski selectors $\mathsf{s}_{\ell}$ such that the conjunction of the following statements holds.   
\[  
g^{-1}_{i,j} (s_{\ell} (Y)) \in (\phi )_{\le \varepsilon} \mbox{ for  }  i < j \mbox{ and } 
g^{-1}_{i,j} (s_{\ell} (Y)) \in (\phi )_{\le 1- \varepsilon} \mbox{ for  }  j \le i . 
\]  
The rest follows from the continuity of the action of $G$ on $\mathcal{X}$. 
$\Box$

\subsection{NIP} 

Let us develope the approach of Sections 4.1 and 4.3 in the case of some other neostability properties. 
We preserve the situation of the general case of Section 4.1, i.e.  we have a $(G, \mathcal{R})$-space $\mathcal{X}$, 
a dense subgroup $G_0 \le G$, a good basis $\mathcal{B}$,  and a {\bf t}-closed subset $Y\subseteq \mathcal{X}$.

\begin{definition} \label{ip} 
A ${\bf t}$-closed set $Y$ has the {\sl  independence property relatively to} $(\mathcal{X},\mathcal{B})$ if there is a grey set $\phi \in \mathcal{B}$ together with grey subgroups $H, H'\in \mathcal{R}$ such that $\phi$ is invariant with  respect to $\mathsf{max} (H, H' )$
and the following property holds: for any $n$ and any $\varepsilon\in \mathbb{Q} \cap [0,1]$ there exist 
\[ 
\{ g_{1,I} \, | \, I \subseteq \{ 1, 2, \ldots n\} \} \cup \{ g_{2,I} \, | \, I\subseteq \{ 1,2,  \ldots n\} \} \cup \ldots \cup \{ g_{n,I} \, | \, I \subseteq \{ 1,2,\ldots ,n\}\}  \subseteq  G_0
\]   
such that $H(g^{-1}_{i,I} g_{i,J})\le \varepsilon$ and $H'(g^{-1}_{i,I}g_{j,I})\le \varepsilon$ for all $i,j\in \{ 1,\ldots ,n\}$, $I, J \subseteq \{ 1,\ldots , n\}$,   and  
\[ 
Y \cap \bigcap \{ (g_{i,I} \phi )_{\le \varepsilon} : i \in I \}  \cap  \bigcap \{ (g_{i,I} \phi )_{\ge 1 - \varepsilon} :  i \not\in I\} \not=\emptyset . 
\] 
\end{definition}  

\begin{prop} \label{preip} 
Assume that $\mathcal{M}$ is a separably categorical first-order structure, $N$ is approximating in $\mathcal{M}$ and 
$Y$ is defined by a first-order $L$-theory. 
Then Definition \ref{ip} is equivalent to existence of a continuous first-order formula $\varphi (\bar{s},\bar{s}')$ with parameters from $N$ such that  for every $n$ and $\varepsilon \in (0,1)$ the formula $\varphi (\bar{x},\bar{x}')$ has the independence property (IP) in the following form: there are 
$( \bar{a}_i )_{i\le n}$  and $(\bar{b}_I )_{I\subseteq [n]}$ in $\mathcal{M}$ and some $M\in Y$ such that 
\[ 
d(tp^{\mathcal{M}}(\bar{a}_i \bar{b}_I), tp^{\mathcal{M}}(\bar{s} \bar{s}'))\le \varepsilon \, , \, i\le n \, , \, I\subseteq [n] \,  \mbox{, and }   
\]  
\[ 
M\models (\varphi (\bar{a}_i , \bar{b}_I ))_{<\varepsilon} \Leftrightarrow i\in I \mbox{ , } 
\] 
\[  
M\models (\varphi (\bar{a}_i , \bar{b}_I ))_{> 1 -\varepsilon} \Leftrightarrow i\not\in I
\]  
\end{prop}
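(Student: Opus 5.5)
The plan is to imitate the proof of Proposition \ref{preunst-1-2-3}, with the index pairs $(i,j)$ replaced by pairs $(i,I)$, $I\subseteq [n]$. The translation between group elements of $G_0$ and tuples of $N$ uses the grey stabilizers $H=H_{q,\bar{s}}$, $H'=H_{q',\bar{s}'}$; as there, we take $q=q'=1$ and assume $\varphi$ and all $L$-symbols are continuous with respect to $\mathsf{id}$.

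\emph{From Definition \ref{ip} to the IP form.} Let $\phi=\varphi(\bar{s},\bar{s}')\in\mathcal{B}_{\mathcal{L}}$ be invariant with respect to $\mathsf{max}(H,H')$. Given $n$ and $\varepsilon$, apply Definition \ref{ip} with $\varepsilon/3$ to obtain $g_{i,I}\in G_0$. Fix an index $I_0$ and a row index $i_0$, and put $\bar{a}_i=g_{i,I_0}(\bar{s})$ and $\bar{b}_I=g_{i_0,I}(\bar{s}')$. The condition $H(g^{-1}_{i,I}g_{i,J})\le\varepsilon$ gives $d(g_{i,I}(\bar{s}),\bar{a}_i)\le\varepsilon$. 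The condition $H'(g^{-1}_{i,I}g_{j,I})\le\varepsilon$ gives $d(g_{i,I}(\bar{s}'),\bar{b}_I)\le\varepsilon$. Hence $d(\bar{a}_i\bar{b}_I,g_{i,I}(\bar{s}\bar{s}'))\le\varepsilon$, which yields both $d(tp^{\mathcal{M}}(\bar{a}_i\bar{b}_I),tp^{\mathcal{M}}(\bar{s}\bar{s}'))\le\varepsilon$ and, by $1$-Lipschitzness, $|g_{i,I}\phi(M)-\varphi^M(\bar{a}_i,\bar{b}_I)|\le\varepsilon$ for every $M$. Take $M\in Y$ in the nonempty intersection. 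Then $\varphi^M(\bar{a}_i,\bar{b}_I)$ is small for $i\in I$ and close to $1$ for $i\notin I$. The ``$\Leftrightarrow$'' clauses hold automatically once $\varepsilon<1/4$, because the two cones $(\cdot)_{<2\varepsilon}$ and $(\cdot)_{>1-2\varepsilon}$ are then disjoint. Renaming $2\varepsilon$ as $\varepsilon$ gives the required IP form.

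\emph{From the IP form to Definition \ref{ip}.} Given $\varphi(\bar{s},\bar{s}')$ with the IP property, let $\phi=\varphi(\bar{s},\bar{s}')$ and choose $H,H'$ from the linear inverse continuity modulus, as in Section \ref{CAS}. For fixed $n,\varepsilon$, take $\bar{a}_i,\bar{b}_I$ and $M$ as in the hypothesis with a smaller $\varepsilon'$. By density of $N$ and continuity we may move these tuples into $N$ at cost $\varepsilon'$. For each pair $(i,I)$ the type of $\bar{a}_i\bar{b}_I$ is $\varepsilon'$-close to that of $\bar{s}\bar{s}'$. By separable categoricity, and specifically approximate oligomorphicity (Theorem 12.10 of \cite{BYBHU}), the $d$-topology and logic topology on types coincide. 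Combined with approximate ultrahomogeneity and property (ii) of the approximating substructure $N$, this gives $g_{i,I}\in G_0$ with $d(g_{i,I}(\bar{s}),\bar{a}_i)$ and $d(g_{i,I}(\bar{s}'),\bar{b}_I)$ small. Then $d(g_{i,I}(\bar{s}),g_{i,J}(\bar{s}))\le d(g_{i,I}(\bar{s}),\bar{a}_i)+d(\bar{a}_i,g_{i,J}(\bar{s}))$, which yields $H(g^{-1}_{i,I}g_{i,J})\le\varepsilon$; the bound for $H'$ follows symmetrically through $\bar{b}_I$. The same closeness transfers the values of $\varphi^M$ to the sets $(g_{i,I}\phi)_{\le\varepsilon}$ and $(g_{i,I}\phi)_{\ge1-\varepsilon}$, so the intersection with $Y$ is nonempty.

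\emph{Main obstacle.} The hard step is producing $g_{i,I}$ inside the countable group $G_0$ that move $\bar{s}\bar{s}'$ close to $\bar{a}_i\bar{b}_I$ with a \emph{uniform} error. Closeness of types gives an automorphism of $\mathcal{M}$ approximately mapping one tuple to the other, and this step needs separable categoricity, exactly as in Proposition \ref{preunst-1-2-3}. That automorphism must then be approximated by an element of $G_0$ via density of $G_0$ in $\mathsf{Aut}(\mathcal{M})$. The errors must be kept under $\varepsilon/3$ at each stage.
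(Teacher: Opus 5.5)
Your proposal is correct and follows the paper's proof essentially step for step. In the forward direction the paper makes your choice with $I_0=\emptyset$, $i_0=1$ and uses the same $H,H'$ estimates together with $1$-Lipschitzness. In the converse it likewise obtains the $g_{i,I}$ from separable categoricity via approximate oligomorphicity.

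You are more explicit than the paper about pushing these isometries into $G_0$ by density. Note, though, that the fact actually used there is that in the separable model of a separably categorical theory the $d$-distance between types equals the distance between $\mathsf{Aut}(\mathcal{M})$-orbits. Approximate ultrahomogeneity and property (ii) are not what is needed, and neither is a hypothesis here.
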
 

{\em Proof.} 
Assume that Definition \ref{ip} is satisfied by a grey subset $\phi (\bar{s},\bar{s}')$ invariant with respect to maximum of $H=H_{q, \bar{s}}$ and $H'=H_{q',\bar{s}'}$. 
For simplicity we may assume that $q = q' = 1$. 

Take any $n$ and $\varepsilon <\frac{1}{4}$. 
Find   
\[ 
\{ g_{1,I} \, | \, I \subseteq \{ 1, 2, \ldots n\} \} \cup \{ g_{2,I} \, | \, I\subseteq \{ 1,2,  \ldots n\} \} \cup \ldots \cup \{ g_{n,I} \, | \, I \subseteq \{ 1,2,\ldots ,n\}\}  \subseteq  G_0 . 
\]   
as in Definition \ref{ip} corresponding to $\varepsilon$. 
Choose $\bar{a}_i $,  $ \, i \in \{ 1, 2, \ldots n\}  \, $, as the images of $\bar{s}$ with respect to $g_{1,\emptyset }, g_{2,\emptyset }, \ldots , g_{n-1 ,\emptyset }, g_{n,\emptyset }$. 
Then the tuples 
$\bar{b}_I$,  $ \, I\subseteq \{ 1,2,  \ldots n\} $, corresponding to  the independence property from the formulation, 
are taken as the images of $\bar{s}'$ with respect to  $g_{1,I}$, $ \, I\subseteq \{ 1,2,  \ldots n\} $. 
Note that for any $i$ and $I$ we have 
$d(\bar{a}_i \bar{b}_I , g_{i,I} (\bar{s}\bar{s}')) = d(g_{i,I} g^{-1}_{i,I} (g_{i,\emptyset } (\bar{s}) g_{1, I} (\bar{s}')), g_{i,I} (\bar{s}\bar{s}')) \le \varepsilon$,   i.e. 
\[ 
d(tp^{\mathcal{M}}(\bar{a}_i \bar{b}_I), tp^{\mathcal{M}}(\bar{s} \bar{s}'))\le \varepsilon \mbox{, and }   
\]  
\[ 
| g_{i,I} \phi (\bar{s},\bar{s}') - \phi (\bar{a}_i ,\bar{b}_I)| \le \varepsilon . 
\]  
In particular, we now see that the structure from $Y$ satisfying the final inequality of Definition \ref{ip} also satisfies the  corresponding inequalities of the formulation of the proposition. 

To prove the contrary direction assume that there exists a continuous first-order formula $\phi (\bar{s},\bar{s}')$ with parameters from $N$ such that the final condition of the formulation holds. 
Now take sufficiently small $\varepsilon$ and sufficiently large $n$. 
Applying the condition to $\phi$, $n$ and $\varepsilon$, for any $i\le n$ and $I\subseteq  \{ 1, 2, \ldots n\} $ let $g_{i,I}$ be an isometry such that up to $\varepsilon$ it maps $\bar{s}$ to $\bar{a}_i$ and $\bar{s}'$ to $\bar{b}_I$ respectively. 
In order to find these isometries use approximate oligomorphicity. 
It is now easy to see that Definition \ref{ip} holds for $\phi(\bar{s},\bar{s}')$.  
$\Box$ 

\bigskip

A theory is NIP (dependent) if no formula has the independence property. 
Note, that stability relatively to $(\mathcal{X},\mathcal{B})$ implies NIP with respect to $(\mathcal{X},\mathcal{B})$. 
In particular, the example of Section 3.2 has relative NIP.  

\bigskip

\begin{prop} 
Under the circumstances of Definition \ref{ip} the family \\ 
$\{ Y \in \mathcal{F}(\mathcal{X},{\bf t}) \, | \, Y$ is $G$-invariant and NIP relatively to $(\mathcal{X},\mathcal{B}) \, \}$ is Borel. 
\end{prop}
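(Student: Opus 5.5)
The plan is to copy the proof of the preceding proposition on Borelness of relative stability, with the order-property configuration replaced by the independence configuration of Definition \ref{ip}. As there, we regard ${\bf t}$-closed $G$-invariant sets $Y$ as pairs $(G,Y)$. By Proposition \ref{elem-Borel} and Remark \ref{Borel4X}, the family of such pairs (and of $G$-invariant $Y$) is Borel. It therefore suffices to show that the complementary family, the $Y$ with the independence property relatively to $(\mathcal{X},\mathcal{B})$, is Borel on this Borel set.

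Unwinding Definition \ref{ip} gives the following quantifier structure:
\[
\exists (\phi ,\max(H,H'))\in \mathcal{I}nv_{\mathcal{B},\mathcal{R}}\ \forall n\ \forall \varepsilon\in\mathbb{Q}\cap[0,1]\ \exists (g_{i,I})_{i\le n, I\subseteq[n]}\subseteq G_0 \ \big(C_1 \wedge C_2(Y)\big).
\]
Here $C_1$ is the side condition $H(g^{-1}_{i,I}g_{i,J})\le\varepsilon$, $H'(g^{-1}_{i,I}g_{j,I})\le\varepsilon$. The condition $C_2(Y)$ is the non-emptiness of $Y\cap\bigcap_{i\in I}(g_{i,I}\phi)_{\le\varepsilon}\cap\bigcap_{i\notin I}(g_{i,I}\phi)_{\ge 1-\varepsilon}$. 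The data $\mathcal{B}$, $\mathcal{R}$, $G_0$ and $\mathbb{Q}\cap[0,1]$ are countable, and so is the set of finite families $(g_{i,I})$. Hence all quantifiers are countable, and $C_1$ does not involve $Y$ at all; it is simply true or false for the fixed data. So the whole family is a countable Boolean combination of the sets $\{Y: C_2(Y)\}$, and we only need each of these to be Borel.

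For $C_2(Y)$ we follow the earlier proof and use the Kuratowski--Ryll-Nardzewski selectors $s_\ell$. The intended reformulation is: there is $\ell$ such that $g^{-1}_{i,I}(s_\ell(Y))\in(\phi)_{\le\varepsilon}$ for all $i\in I$ and $g^{-1}_{i,I}(s_\ell(Y))\in(\phi)_{\ge1-\varepsilon}$ for all $i\notin I$. Each clause is the preimage of a Borel set under the Borel map $Y\mapsto g^{-1}_{i,I}s_\ell(Y)$, since $s_\ell$ is Borel and the action is continuous. The intersection is finite, because there are only $n\cdot 2^n$ conditions.

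The expected obstacle is that the selectors give only a dense subset of $Y$, while the target set uses closed cones $(\cdot)_{\le\varepsilon}$, which need not contain a selector value even when they meet $Y$. To handle this we replace the closed cones by open cones with slightly larger rational parameters, $(\phi)_{<\varepsilon'}$ and $(\phi)_{>1-\varepsilon'}$ with $\varepsilon<\varepsilon'$ rational. Because $\varepsilon$ is universally quantified over rationals, the definition does not change: a witness for $\varepsilon$ is a witness for every $\varepsilon'>\varepsilon$, and conversely we may pass to smaller $\varepsilon$. The open cones are ${\bf t}$-open, because $\mathcal{B}$ is a grey basis of ${\bf t}$. Their translates by $g_{i,I}$ are ${\bf t}$-open as well, since the action is ${\bf t}$-continuous. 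So a finite intersection of them meets $Y$ if and only if some $s_\ell(Y)$ lies in it. Here we use the selectors for $\mathcal{F}(\mathcal{X},{\bf t})$, which exist because ${\bf t}$ is Polish. With this adjustment $C_2$ is Borel, and the countable combination above yields the statement.
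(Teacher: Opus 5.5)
Your argument is the paper's: quantify over the countable data ($\mathcal{I}nv_{\mathcal{B},\mathcal{R}}$, $n$, $\varepsilon$, finite families from $G_0$), note that the side conditions on $H,H'$ do not involve $Y$, and decide the remaining non-emptiness clause with Kuratowski--Ryll-Nardzewski selectors for $(\mathcal{X},{\bf t})$ and continuity of the action.

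Your passage to open cones makes precise a step the paper passes over. For a fixed $\varepsilon$, membership of some $s_\ell(Y)$ in the closed cones $(g_{i,I}\phi)_{\le\varepsilon}$, $(g_{i,I}\phi)_{\ge 1-\varepsilon}$ is not equivalent to $Y$ meeting their intersection. The paper's selector reformulation becomes correct only after the universal quantification over $\varepsilon$, via exactly the interpolation $\varepsilon<\varepsilon'$ you describe (reading $\varepsilon$ as ranging over positive rationals).

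One hypothesis should be made explicit, since both arguments use it. Being a grey basis of ${\bf t}$ only makes the lower cones $(\phi)_{<r}$ ${\bf t}$-open. For $(g_{i,I}\phi)_{>1-\varepsilon'}$ to be ${\bf t}$-open you also need $\phi$ to be ${\bf t}$-clopen as a grey set, for example $1-\phi\in\mathcal{B}$. This holds for $\mathcal{B}_{\mathcal{L}}$, where $\mathcal{L}$ is closed under the connectives.
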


{\em Proof.} 
We consider ${\bf t}$-closed invariant subsets $Y$  
as elements of the corresponding Borel set of pairs  
$(G, Y )$.  
Since the sets $\mathbb{Q} \cap [0,1]$, $G_0$, 
$\mathcal{B}(\mathbb{Q})$,  $\mathcal{R}(G_0 ,\mathbb{Q})$
and $\mathcal{I}nv_{\mathcal{B},\mathcal{R}}$ are countable 
it suffices to show that given 
\[ 
\{ g_{1,I} \, | \, I \subseteq \{ 1, 2, \ldots n\} \} \cup \{ g_{2,I} \, | \, I\subseteq \{ 1,2,  \ldots n\} \} \cup \ldots \cup \{ g_{n,I} \, | \, I \subseteq \{ 1,2,\ldots ,n\}\}  \subseteq  G_0
\]   
and $(\phi )_{\le \varepsilon}$ and 
$(\phi )_{\ge 1 -\varepsilon} \in \mathcal{B}(\mathbb{Q})$ 
the family of all ${\bf t}$-closed $G$-invariant $Y$ with   
\[ 
Y \cap \bigcap \{ (g_{i,I} \phi )_{\le \varepsilon} : i \in I \}  \cap  \bigcap \{ (g_{i,I} \phi )_{\ge 1 - \varepsilon} : i \not\in I \} \not=\emptyset 
\] 
is Borel. 
Note that this condition can be written as a statement of existence of a Kuratowski-Ryll-Nardzewski selectors $\mathsf{s}_{\ell}$ such that the conjunction of the following statements holds.   
\[  
g^{-1}_{i,I} (s_{\ell} (Y)) \in (\phi )_{\le \varepsilon} \mbox{ for  }  i \in I \mbox{ and } 
g^{-1}_{i,I} (s_{\ell} (Y)) \in (\phi )_{\le 1- \varepsilon} \mbox{ for  }  i \not\in I. 
\]  
The rest follows from continuity of the action of $G$ on $\mathcal{X}$. 
$\Box$

\bigskip

\bigskip 

Institute of Computer Science, \parskip0pt 

University of Opole, ul.Oleska 48, 45 - 052 Opole, Poland \parskip0pt 

E-mail: aleksander.iwanow@uni.opole.pl   
\end{document}